\documentclass[11pt,twoside,reqno]{amsart}

\usepackage{microtype}
\usepackage{cite}
\usepackage[OT1]{fontenc}
\usepackage{type1cm}
\usepackage{amssymb}
\usepackage{enumitem}
\usepackage{comment}
\usepackage{xcolor}
\usepackage{graphicx}
%\usepackage{esint}
%\usepackage{dsfont}
%\usepackage{tikz}
%\usetikzlibrary{calc,patterns,angles,quotes,arrows}
%\usepackage{verbatim}
%\usepackage[backref=page]{hyperref}
%\usepackage{refcheck}

\usepackage{geometry}
\geometry{a4paper,centering}
%\geometry{letter,centering}

%\renewcommand{\captionlabeldelim}{.}
%\renewcommand{\thefigure}{\Alph{figure}}
%\setcaptionwidth{.9\textwidth}

\usepackage{hyperref}
\hypersetup{
  colorlinks=true,
  linkcolor=black,
  anchorcolor=black,
  citecolor=black,
  filecolor=black,
  menucolor=red,
  runcolor=black,
  urlcolor=black,
}

\numberwithin{equation}{section}

\theoremstyle{plain}
\newtheorem{theorem}{Theorem}[section]

\newtheorem{proposition}[theorem]{Proposition}

\newtheorem{lemma}[theorem]{Lemma}

\theoremstyle{remark}

\theoremstyle{definition}

\newcommand{\LL}{\mathcal{L}}

\newcommand{\MM}{\mathcal{M}}

\newcommand{\CC}{\mathcal{C}}

\newcommand{\R}{\mathbb{R}}

\newcommand{\N}{\mathbb{N}}
\newcommand{\hhh}{\mathtt{h}}
\newcommand{\iii}{\mathtt{i}}
\newcommand{\jjj}{\mathtt{j}}
\newcommand{\kkk}{\mathtt{k}}

\newcommand{\dd}{\,\mathrm{d}}
\newcommand{\ttt}{\mathbf{t}}
\newcommand{\www}{\mathbf{w}}
\newcommand{\vvv}{\mathbf{v}}
%\newcommand{\1}{\mathds{1}}

% The following gives `Jon Fraser' inequality signs
\renewcommand{\ge}{\geqslant}
\renewcommand{\le}{\leqslant}
\renewcommand{\geq}{\geqslant}
\renewcommand{\leq}{\leqslant}

\DeclareMathOperator{\dimloc}{dim_{loc}}

\DeclareMathOperator{\udimm}{\overline{dim}_M}

\DeclareMathOperator{\dimh}{dim_H}

\DeclareMathOperator{\ldimh}{\underline{dim}_H}

\DeclareMathOperator{\udimp}{\overline{dim}_p}

\DeclareMathOperator{\diml}{dim_L}

\DeclareMathOperator{\proj}{proj}

\renewcommand{\atop}[2]{\genfrac{}{}{0pt}{}{#1}{#2}}

\newcounter{nameOfYourChoice}

\allowdisplaybreaks

%%%%%%%%%%%%%%%%%%%%%%%%%%%%%%%%%%%%%%%%%%%%%%%%%%%%%%%%%%%%%%%%%%%%%
\begin{document}

\title[Dimension of non-conformal attractors]{Dimension of planar non-conformal attractors with triangular derivative matrices}

\author{Bal\'azs B\'ar\'any}
\address[Bal\'azs B\'ar\'any]
{Department of Stochastics \\
	Institute of Mathematics \\
	Budapest University of Technology and Economics \\
	M\H{u}egyetem rkp. 3 \\
	H-1111 Budapest,
	Hungary}
\email{barany.balazs@ttk.bme.hu}

\author{Antti K\"aenm\"aki}
\address[Antti K\"aenm\"aki]
         {Alfr\'ed R\'enyi Institute of Mathematics \\
          Hungarian Academy of Sciences \\ 
          Budapest \\ 
          Hungary}
\email{antti.kaenmaki@gmail.com}

\thanks{B. B\'ar\'any was supported by the grants NKFI FK134251, K142169, and the grant NKFI KKP144059 ``Fractal geometry and applications''.}
\subjclass[2000]{Primary 28A80, 37C45; Secondary 37C05, 37D35}
\keywords{Iterated function system, non-linear attractor, Hausdorff dimension, Lyapunov dimension}
\date{\today}

\begin{abstract}
  We study the dimension of the attractor and quasi-Bernoulli measures of parametrized families of iterated function systems of non-conformal and non-affine maps. We introduce a transversality condition under which, relying on a weak Ledrappier-Young formula, we show that the dimensions equal to the root of the subadditive pressure and the Lyapunov dimension, respectively, for almost every choice of parameters. We also exhibit concrete examples satisfying the transversality condition with respect to the translation parameters.
\end{abstract}

\maketitle

\tableofcontents

\section{Introduction}

The dimension theory of iterated function systems is a rapidly developing branch of the geometric measure theory. There are still countless open questions. One of the main problems is to determine the dimension of the attractor of typical non-conformal and non-affine iterated function systems. An \emph{iterated function system} is a finite tuple $\Phi=(F_i)_{i=1}^N$ of $C^1$ contractions acting on $\R^d$. Hutchinson \cite{Hutchinson1981} showed that there exists a unique non-empty compact set $X$, called the \emph{attractor} of the $\Phi$, such that $X=\bigcup_{i=1}^NF_i(X)$. In this generality, the question of the dimension of the set $X$ is wide open.

In the special case, when the maps $F_i$ are similarities of $\R^d$, we call the attractor $X$ \emph{self-similar}, and if the \emph{strong separation condition} holds, i.e., $F_i(X) \cap F_j(X) = \emptyset$ whenever $i \ne j$, the dimension of $X$ can be easily calculated and expressed by using only the contraction ratios; see Hutchinson \cite{Hutchinson1981}. The situation is much harder to deal with when there is no separation condition. The first result considering this problem successfully was by Pollicott and Simon \cite{PoSi}. In their study, they introduced and used a method which is nowadays called the \emph{transversality method}. This method was later generalized, for example, by Solomyak \cite{Solomyak1998} for Bernoulli convolutions and by Simon, Solomyak, and Urba\'nski~\cite{SSU01,SSU01b} for iterated function systems formed by $C^{1+\alpha}$ maps on $\R$. Building on these ideas, many breakthrough results have appeared in the dimension theory of self-similar sets over the past decade; for example, see Hochman~\cite{Hochman2014,hochman2017selfsimilar}, Shmerkin~\cite{Shmerkin2014}, and Varj\'u~\cite{Varju19}.

Although addressing the dimension theory of iterated function systems formed by $C^{1+\alpha}$ maps on $\R$ is relatively easy, the higher dimensional analog is difficult as $C^{1+\alpha}$ maps on $\R^d$ are not necessarily conformal. In the non-conformal case, when the maps $F_i$ are invertible affine transformations, Falconer~\cite{Falconer1988} introduced an upper bound, called the \emph{affinity dimension}, for the dimension of the attractor $X$ which in this case is called the \emph{self-affine set}. He also proved that this bound is achieved for Lebesgue-almost every choice of the translation vectors of the affine maps $F_i$ having fixed linear parts with sufficiently small contraction ratios. A sharp bound for the contraction ratios was found by Solomyak~\cite{Solomyak1998} and the result was extended to measures by Jordan, Pollicott, and Simon~\cite{JordanPollicottSimon2007}. In all the mentioned works, the main tool was an affine version of the transversality method. Recently, the dimension of self-affine sets has been calculated in a deterministic setting under certain separation conditions in $\R^2$ and $\R^3$; see B\'ar\'any, Hochman, and Rapaport~\cite{BHR}, Hochman and Rapaport~\cite{HochmanRapaport2021} and Rapaport~\cite{rapaport2022self}. One of the main tools used in these results is the Ledrappier-Young formula introduced by Ledrappier and Young \cite{LedrappierYoung1,LedrappierYoung2} for $C^2$ diffeomorphisms on $C^\infty$ compact manifolds. The formula was later established for self-affine systems by Feng and Hu~\cite{FengHu2009}, B\'ar\'any and K\"aenm\"aki~\cite{BaranyKaenmaki2017}, and Feng~\cite{Feng23}. It connects the dimension of an ergodic measure supported on the self-affine set with the dimension of its projections along sufficient ``strongly contracting'' directions.

Surprisingly few results are known on the dimension theory of general non-conformal and non-affine systems. One example of such an object is the graph of the Weierstrass function. Let $\varphi\colon\R\to\R$ be a $1$-periodic $C^1$ function, $b\geq2$ an integer, and $b^{-1}<\lambda<1$. The \emph{Weierstrass function} is then $W(x)=\sum_{n=0}^\infty\lambda^n\varphi(b^nx)$. It is well-known that $W$ is a nowhere differentiable continuous function. It is also easy to see that the graph $\{(x,W(x)):x\in[0,1]\}$ is the attractor of the iterated function system $(F_i)_{i\in\{0,\ldots,b-1\}}$, where $F_i\colon\R^2\to\R^2$, $F_i(x,y)=(\frac{x+i}{b},\lambda y+\varphi(\frac{x+i}{b}))$. Ledrappier \cite{Ledrappier92} showed the Ledrappier-Young formula for such a system, and using that Bara\'nski, B\'ar\'any, and Romanowska \cite{BBR} studied the classical case $\varphi(x)=\cos(2\pi x)$ and verified Mandelbrot's conjecture that the dimension equals to $2+\frac{\log b}{\log\lambda}$ on a region of parameters. This result was later extended to the general case by Shen \cite{Shen}. Both results relied on a suitable transversality method and the task was to verify the assumptions required by Ledrappier~\cite{Ledrappier92}. Recently, Ren and Shen \cite{RenShen}, by relying on methods more resembling the methods used in \cite{BHR}, proved a nice dichotomy for the Weierstrass functions: for each analytic $\varphi$, either $W$ is analytic or the graph has dimension $2+\frac{\log b}{\log\lambda}$.

The general case of non-conformal and non-affine systems is far from being well understood. Falconer~\cite{Falconer94} found an upper bound in terms of a sub-additive pressure function for the upper Minkowski dimension of the attractor of an iterated function system formed by $C^2$-maps satisfying a technical condition called $1$-bunching. This was extended by Zhang~\cite{Zhang97} by showing that the sub-additive pressure gives an upper bound for the Hausdorff dimension of the attractor of an iterated function system formed by $C^1$ maps without the $1$-bunched condition. Only very recently, it was shown by Feng and Simon~\cite{FengSimon1} that this formula bounds also the upper Minkowski dimension in the case of $C^1$ maps. So far there have been only a very few cases when the dimension actually equals the upper bound given by the sub-additive pressure, and usually these cases deal with the Minkowski dimension under some special conditions; for example, see Falconer~\cite{Falconer94} and Barreira~\cite{Barreira96}. Feng and Simon \cite{FengSimon2} introduced a non-conformal and non-affine version of the transversality condition under which the Hausdorff and Minkowski dimensions equal the value given by the sub-additive pressure for almost every choice of parameters. They verified the result for certain iterated function systems where the functions have lower-triangular derivative matrices with a common strong stable direction. Jurga and Lee \cite{JurgaLeeLY} recently proved the Ledrappier-Young formula for such systems. For further developments, see the recent survey of Feng and Simon \cite{FengSimonsurvey}.

This paper is devoted to complementing the result of Feng and Simon \cite{FengSimon2}. We also restrict ourselves to functions with lower-triangular derivative matrices and further, we only consider the planar case. The Ledrappier-Young formula has been crucial in the development of the self-affine theory and our plan is to use it also here. Furthermore, we introduce a variant of the transversality condition which depends on the Ledrappier-Young formula. The difference with the already studied lower-triangular case is that instead of having a common strong stable direction, the maps now have a common weak stable direction. As the projections one has to use in this case come from certain ordinary differential equations and are non-linear, this setting cannot be studied by simply applying the existing methods from self-affine sets. For example, Jurga and Lee \cite{JurgaLeeLY} assumed the lower-triangular derivative matrices to have a common strong stable direction, so they were able to use orthogonal projections in their Ledrappier-Young formula.

In our main result, Theorem \ref{thm:meta}, we prove, by using a weak Ledrappier-Young formula we formulate and a transversality condition for the non-linear projections we introduce, that the Hausdorff and Minkowski dimension of the attractor of a general parametrized iterated function system with lower-triangular derivative matrices equals the value given by the sub-additive pressure for almost every choice of parameters. In Theorems \ref{thm:mainex1} and \ref{thm:mainex1b}, we exhibit two concrete classes of non-conformal and non-affine iterated function systems parametrized by their translation vectors for which the main theorem applies.

\subsection{Dimension estimates} \label{sec:dimension-estimates}
Let us now define the planar non-conformal iterated function systems we intend to study in details. Let $\Phi = (F_i)_{i \in \{1,\ldots,N\}}$, where $F_i \colon \R^2 \to \R^2$, $F_i(x,y) = (f_i(x),g_i(x,y))$, be an iterated function system such that
\begin{enumerate}[label=(G\arabic*)]
	\item\label{it:condLY1} $F_i\in C^{2}([0,1]^2)$ and $F_i([0,1]^2)\subseteq[0,1]^2$ for all $i\in\{1,\ldots,N\}$,
	\item\label{it:condLY2} there exist $0<\tau<\rho<1$ such that
  $$
  \tau < |f_i'(x)| < |(g_i)'_y(x,y)| < \rho
  $$
  for all $(x,y)\in[0,1]^2$ and $i\in\{1,\ldots,N\}$.
\end{enumerate}
We denote the attractor of $\Phi$ by $X$. Let us remark that the usually studied class of functions in the theory of iterated function systems is $C^{1+\alpha}$. However, in order to verify the transversality condition, we assume in \ref{eq:cond1} the partial derivatives of the second coordinate to be smooth. Therefore, for simplicity, we assume the maps to be in $C^2$. Note that the first coordinates of the maps in $\Phi$ form a $C^2$-conformal IFS on $[0,1]$. Let us denote this IFS by $\phi = (f_i)_{i\in\{1,\ldots,N\}}$.

We will next recall some standard notation. Let us denote by $\Sigma=\{1,\ldots,N\}^\N$ the \emph{symbolic space}, i.e., the set of all infinite words formed by the symbols $\{1,\ldots,N\}$. The set of $n$-length words is denoted by $\Sigma_n=\{1,\ldots,N\}^n$ and the set of finite length words by $\Sigma_*=\bigcup_{n=0}^\infty\Sigma_n$. We use the convention that $\Sigma_0=\{\varnothing\}$, where $\varnothing$ is the empty word, i.e., the identity element of the free monoid $\Sigma_*$. The length of a word $\iii\in\Sigma_*$ is denoted by $|\iii|$. For $\iii=i_1i_2\cdots\in\Sigma$, let $\iii|_n$ be the first $n$ coordinates of $\iii$, i.e., $\iii|_n=i_1\cdots i_n$. We use the convention that $\iii|_0=\varnothing$. For any two words $\iii,\jjj\in\Sigma\cup\Sigma_*$, let $|\iii\wedge\jjj|=\max\{k\geq0:\iii|_{k}=\jjj_{k}\}$ be the length of the common part and let $\iii\wedge\jjj=\iii|_{|\iii\wedge\jjj|}=\jjj|_{|\iii\wedge\jjj|}$ be the common part of $\iii$ and $\jjj$. The concatenation of $\iii\in\Sigma_*$ and $\jjj\in\Sigma\cup\Sigma_*$ is denoted by $\iii\jjj$. For a finite word $\iii=i_1\cdots i_n\in\Sigma_*$, let us denote by $\overleftarrow{\iii}=i_n\cdots i_1$ the finite word in reversed order. We also write $F_\iii=F_{i_1}\circ\cdots\circ F_{i_n}$ and denote the first and second coordinates of $F_\iii$ by $f_\iii$ and $g_\iii$, respectively. We use the convention that $F_\varnothing(x,y)=(x,y)$, $f_\varnothing(x)=x$ and $g_\varnothing(x,y)=y$. Throughout the paper, we will denote the derivative of the maps $g_i$ (and respectively their iterates $g_\iii$) with respect to the first coordinate by $(g_i)'_x$ and with respect to the second coordinate by $(g_i)_y'$ (and respectively $(g_\iii)'_x$ and $(g_\iii)_y'$ for higher iterates). Since the first coordinate of the map $F_i$ depend only on the first variable, we denote the derivative of $f_i$ (and $f_\iii$) by $f_i'$ (and by $f_\iii'$, respectively).

Let $\pi\colon\Sigma\to X$ be the \emph{canonical projection} defined by
\begin{equation} \label{eq:natproj}
\pi(\iii)=\lim_{n\to\infty}F_{\iii|_n}(x,y)
\end{equation}
for all $\iii \in \Sigma$, where the value of $\pi(\iii)$ is independent of the choice of $(x,y)\in[0,1]^2$. Let us denote the first and second coordinates of $\pi(\iii)$ by $\pi^1(\iii)$ and $\pi^2(\iii)$, respectively. Observe that $\pi^1$ is the canonical projection of $\phi$, i.e., $\pi^1(\iii)=\lim_{n\to\infty}f_{\iii|_n}(x)$ for all $\iii \in \Sigma$ and $x \in [0,1]$.

For $\iii\in\Sigma_*$, let
\begin{equation} \label{eq:singvaluefunction}
\varphi^s(\iii,x,y)=
\begin{cases}|(g_\iii)'_y(x,y)|^s, &\text{if $0\leq s\leq1$},\\
	|(g_\iii)'_y(x,y)||(f_\iii)'(x)|^{s-1}, &\text{if $1< s\leq2$},\\
	|(g_\iii)'_y(x,y)(f_\iii)'(x)|^{s/2}, &\text{if $s>2$}.
\end{cases}
\end{equation}
We define the \emph{subadditive pressure function} $P\colon[0,\infty)\mapsto\R$ by setting
\begin{equation} \label{eq:subpressure}
P(s)=\lim_{n\to\infty}\frac{1}{n}\log\sum_{\iii\in\Sigma_n}\varphi^s(\iii,x,y)
\end{equation}
for all $s \ge 0$, where the limit exists and is independent of the choice of $(x,y)\in[0,1]^2$ by \cite[Theorem~3.1]{B2009}. This fact strongly relies on the bounded distortion property of the iterates of the maps: there exists $C>1$ such that, for each $\iii\in\Sigma_*$ and for every $(x,y),(x',y')\in[0,1]^2$, we have
\begin{equation} \label{eq:bd}
	C^{-1}\leq\frac{|f_\iii'(x)|}{|f_\iii'(x')|}\leq C \qquad\text{and}\qquad C^{-1}\leq\frac{|(g_{\iii})'_y(x,y)|}{|(g_{\iii})'_y(x',y')|}\leq C;
\end{equation}
see \cite[Proposition~2.1]{B2009}. Relying on \ref{it:condLY2}, it is easy to see that $s\mapsto P(s)$ is a continuous, strictly decreasing function such that $P(0)=\log N$ and $\lim_{s\to\infty}P(s)=-\infty$. Hence, there exists a unique root of the subadditive pressure function and we denote it by $s_0$. It follows from a recent general result of Feng and Simon \cite[Theorem 1.1]{FengSimon1} that
\begin{equation} \label{eq:upperboundforset}
	\udimm(X)\leq\min\{2,s_0\},
\end{equation}
where $\udimm$ is the upper Minkowski dimension of a given set.

We denote the collection of all Borel probability measures on $\Sigma$ by $\MM(\Sigma)$, and endow it with the weak$^*$ topology. The \emph{left shift} is a map $\sigma \colon \Sigma \to \Sigma$ defined by setting $\sigma\iii = \sigma(\iii) = i_2i_3\cdots$ for all $\iii = i_1i_2\cdots \in \Sigma$. We say that a measure $\mu \in \MM(\Sigma)$ is \emph{$\sigma$-invariant} if $\mu([\iii]) = \sum_{i=1}^N\mu([i\iii])$ for all $\iii \in \Sigma_*$ and \emph{ergodic} if $\mu(A)=0$ or $\mu(A)=1$ for every Borel set $A \subseteq \Sigma$ with $\sigma^{-1}(A)=A$. Write
\begin{equation*}
  \MM_\sigma(\Sigma) = \{\mu \in \MM(\Sigma) : \mu \text{ is $\sigma$-invariant}\}.
\end{equation*}
Recall that the \emph{Kolmogorov-Sinai entropy} of $\mu \in \MM(\Sigma)$ is
\begin{equation*}
  h(\mu) = -\lim_{n \to \infty} \frac{1}{n}\sum_{\iii \in \Sigma_n} \mu([\iii])\log\mu([\iii])
\end{equation*}
and the \emph{Lyapunov exponents} of $\mu$ are
\begin{equation} \label{eq:lyapexp}
\begin{split}
  \chi_1(\mu) &= -\int\log|(g_{i_1})'_y(\pi(\sigma\iii))|\dd\mu(\iii), \\ 
  \chi_2(\mu) &= -\int\log|f_{i_1}'(\pi^1(\sigma\iii))|\dd\mu(\iii).
\end{split}
\end{equation}
Feng and Simon \cite[Theorem 1.2]{FengSimon1} have shown that
\begin{equation}\label{eq:upperboundformeasure}
	\udimp(\pi_*\mu) \leq \min\biggl\{2,\frac{h(\mu)}{\chi_1(\mu)},1+\frac{h(\mu)-\chi_1(\mu)}{\chi_2(\mu)}\biggr\},
\end{equation}
where $\udimp$ is the upper packing dimension of a given measure. The expression on the right-hand side of the inequality above is called the \emph{Lyapunov dimension} of $\mu$ and we denote it by $\diml(\mu)$. To infer the dimension result for the attractor, it suffices to work with a smaller class of invariant measures. We say that $\mu \in \MM(\Sigma)$ is \emph{quasi-Bernoulli} if there exists a constant $C \ge 1$ such that
\begin{equation*}
  C^{-1}\mu([\iii])\mu([\jjj]) \le \mu([\iii\jjj]) \le C\mu([\iii])\mu([\jjj])
\end{equation*}
for all $\iii,\jjj \in \Sigma_*$. It is straightforward to see that every quasi-Bernoulli measure is equivalent to a $\sigma$-invariant quasi-Bernoulli measure which furthermore is ergodic.

\subsection{Two concrete examples} \label{sec:examples}
Let us now consider parametrized families of iterated function systems satisfying \ref{it:condLY1} and \ref{it:condLY2}. A natural way to parametrize such systems is via translation parameters $\ttt \in U$. To emphasize the dependence on the parameter $\ttt \in U$, we denote the root of the subadditive pressure \eqref{eq:subpressure} by $s_0(\ttt)$, the Lyapunov exponents of \eqref{eq:lyapexp} by $\chi_1(\mu,\ttt)$ and $\chi_2(\mu,\ttt)$, the Lyapunov dimension of \eqref{eq:upperboundformeasure} by $\diml(\mu,\ttt)$, the canonical projection of \eqref{eq:natproj} by $\pi_\ttt$, and the attractor by $X_\ttt = \pi_\ttt(\Sigma)$. Let $\ldimh$ be the lower Hausdorff dimension of a given measure.

To prove the desired dimension result for $X_\ttt$, it suffices to work with quasi-Bernoulli measures $\mu \in \MM_\sigma(\Sigma)$. The task is to show that $(\pi_\ttt)_*\mu$ satisfies
\begin{equation*}
  \ldimh((\pi_\ttt)_*\mu) \ge \diml(\mu,\ttt)
\end{equation*}
for almost all $\ttt\in U$. By \eqref{eq:upperboundformeasure}, the measure $(\pi_\ttt)_*\mu$ then satisfies $\ldimh((\pi_\ttt)_*\mu) = \udimp((\pi_\ttt)_*\mu)$ for almost all $\ttt \in U$. If a Borel measure $m$ satisfies such an equality, then we say that $m$ is \emph{exact-dimensional} and we denote the common value by $\dim(m)$. The equality also implies that the \emph{pointwise dimension} of $m$, $\dimloc(m,x)$, exists at almost every $x$, is almost everywhere constant, and the value equals $\dim(m)$. In other words,
\begin{equation*} %\label{eq:pointwise-dimension}
  \dimloc(m,x) = \lim_{r \downarrow 0} \frac{\log m(B(x,r))}{\log r} = \dim(m)
\end{equation*}
for $m$-almost all $x \in \R^2$.

Our first example is a planar non-conformal iterated function system $\Phi^{\ttt} = (F_i^\ttt)_{i \in \{1,\ldots,N\}}$, where $F_i^\ttt \colon \R^2 \to \R^2$, $F_i^\ttt(x,y) = (f_i(x)+t_{i,1},g_i(x,y)+t_{i,2})$, are $C^2$ maps parametrized on an open and bounded set $U\subset\R^{2N}$ such that
\begin{figure}[t]
    \centering
    \includegraphics[width=0.8\linewidth]{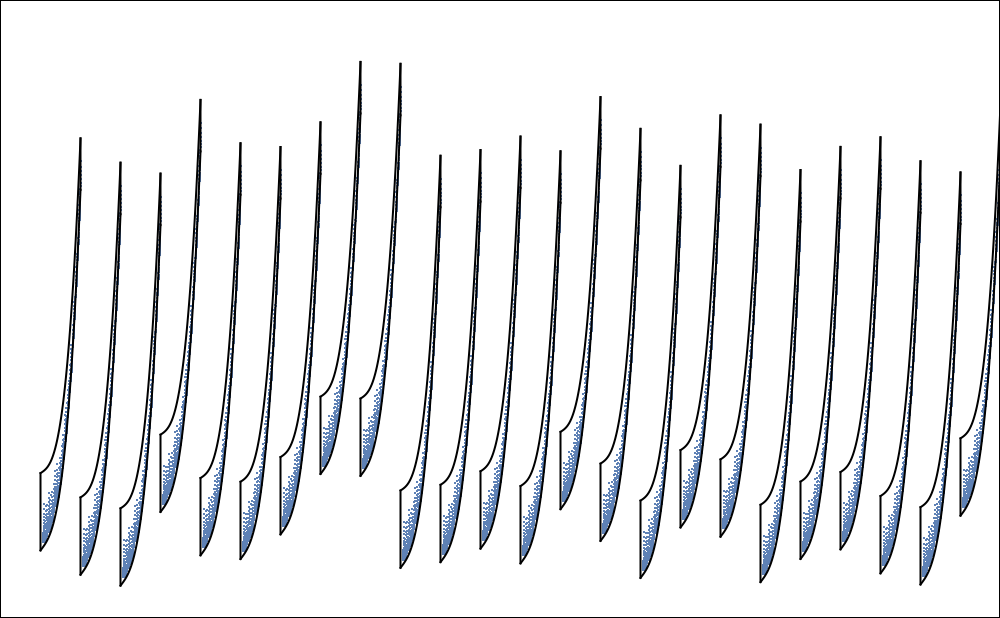}
    \caption{The attractor of an iterated function system $\Phi$ satisfying the assumptions \ref{it:1}--\ref{it:3} inside the first-level cylinders.}
    \label{fig:attractor}
\end{figure}
\begin{enumerate}[label=(A\arabic*)]
	\item\label{it:1} it holds that
  \begin{equation*}
    0 < \tau < f_i'(x) < (g_i)'_y(x,y) < \rho < \frac12
  \end{equation*}
  for all $(x,y)\in[0,1]^2$ and $i \in \{1,\ldots,N\}$,
	\item\label{it:2} it holds that
  \begin{equation*}
    (g_i)''_{xy}(x,y) \le 0 \le (g_i)'_x(x,y) \qquad\text{and}\qquad (g_j)''_{yy}(x,y) \ge 0
  \end{equation*}
  for all $(x,y)\in[0,1]^2$ and $i\in\{1,\ldots,N\}$,
	\item\label{it:3} $F_i^\ttt([0,1]^2)\subseteq[0,1]^2$ for all $i \in \{1,\ldots,N\}$ and $\ttt = ((t_{1,1},t_{1,2}),\ldots,(t_{N,1},t_{N,2})) \in U$.
\end{enumerate}
For example, $\Phi = (F_i+(t_{i,1},t_{i,2}))_{i=1}^{24}$, where
\[
F_i(x,y)=\biggl(\frac{x+i}{25},\frac{y^2+2y+1-xy+12x^3+2x}{24}\biggr),
\]
satisfies the assumptions \ref{it:1}--\ref{it:3} on a sufficiently small neighbourhood of the origin in $\R^{48}$; see Figure \ref{fig:attractor}. We also remark that the assumption \ref{it:2} can be replaced by
\begin{enumerate}
	\item[(A2')] $(g_i)''_{xy}(x,y)\leq0$, $(g_i)'_x(x,y)\leq0$, and $(g_i)''_{yy}(x,y)\leq0$ for all $(x,y)\in[0,1]^2$ and $i \in \{1,\ldots,N\}$.
\end{enumerate}
Let $\LL^d$ be the Lebesgue measure on $\R^d$ and $\dimh$ be the Hausdorff dimension of a given set.

\begin{theorem}\label{thm:mainex1}
	Suppose that $\Phi^{\ttt}$ satisfies the assumptions \ref{it:1}--\ref{it:3}. If $\mu \in \MM_\sigma(\Sigma)$ is quasi-Bernoulli, then
  \begin{equation*}
    \dim((\pi_\ttt)_*\mu) = \diml(\mu,\ttt)
  \end{equation*}
  for $\LL^{2N}$-almost all $\ttt\in U$ and $(\pi_\ttt)_*\mu\ll\LL^2$ for $\LL^{2N}$-almost all $\ttt \in U$ for which $h(\mu)>\chi_1(\mu,\ttt)+\chi_2(\mu,\ttt)\}$. Furthermore,
  \begin{equation*}
    \dimh(X_\ttt) = \udimm(X_\ttt) = \min\{2,s_0(\ttt)\}
  \end{equation*}
  for $\LL^{2N}$-almost all $\ttt\in U$ and $\LL^2(X_\ttt)>0$ for $\LL^{2N}$-almost all $\ttt \in U$ with $s_0(\ttt)>2$.
\end{theorem}

The second parametrized family is a planar non-conformal iterated function system $\Phi^{\ttt} = (F_i^\ttt)_{i \in \{1,\ldots,N\}}$, where $F_i^\ttt \colon \R^2 \to \R^2$, $F_i^\ttt(x,y) = (f_i(x)+t_{i,1},g_i(x,y)+t_{i,2})$, are $C^2$ maps parametrized on an open and bounded set $U\subset\R^{2N}$ such that
\begin{enumerate}[label=(B\arabic*)]
  \item\label{it:1b} it holds that
  \begin{equation*}
    0 < 4\tau < 4|f_i'(x)| < |(g_i)'_y(x,y)| < \frac14
  \end{equation*}
  for all $(x,y)\in[0,1]^2$ and $i \in \{1,\ldots,N\}$,
  \item\label{it:2b} it holds that
  \begin{equation*}
    \max_{\atop{(x,y)\in[0,1]^2}{i\in\{1,\ldots,N\}}}\frac{|(g_i)''_{xy}(x,y)|}{|(g_i)_y'(x,y)|}\leq\frac13
  \end{equation*}
  and
  \begin{equation*}
    \max_{\atop{(x,y)\in[0,1]^2}{i\in\{1,\ldots,N\}}}\frac{|(g_i)'_x(x,y)|}{|(g_i)'_y(x,y)|}\cdot\max_{\atop{(x,y)\in[0,1]^2}{j\in\{1,\ldots,N\}}}\frac{| (g_j)_{yy}''(x,y)|}{|(g_j)_y'(x,y)|}\leq\frac13
  \end{equation*}
  for all $(x,y)\in[0,1]^2$,
  \item\label{it:3b} $F_i^\ttt([0,1]^2)\subseteq[0,1]^2$ for all $i \in \{1,\ldots,N\}$ and $\ttt = ((t_{1,1},t_{1,2}),\ldots,(t_{N,1},t_{N,2})) \in U$.
\end{enumerate}
For example, $\Phi = (F_i+(t_{i,1},t_{i,2}))_{i=1}^{13}$, where
\begin{equation*}
  F_i(x,y)=\biggl(e^{(x-i)/25},\frac{y}{5} e^{(x-i)/25}+\frac{\cos(x)}{2}+\frac{i-6}{25}\biggr),
\end{equation*}
satisfies the assumptions \ref{it:1b}--\ref{it:3b} on a sufficiently small neighbourhood of the origin in $\R^{26}$. The result for this family is precisely the same.

\begin{theorem}\label{thm:mainex1b}
  Suppose that $\Phi^{\ttt}$ satisfies the assumptions \ref{it:1b}--\ref{it:3b}. If $\mu \in \MM_\sigma(\Sigma)$ is quasi-Bernoulli, then
  \begin{equation*}
    \dim((\pi_\ttt)_*\mu) = \diml(\mu,\ttt)
  \end{equation*}
  for $\LL^{2N}$-almost all $\ttt\in U$ and $(\pi_\ttt)_*\mu\ll\LL^2$ for $\LL^{2N}$-almost all $\ttt \in U$ for which $h(\mu)>\chi_1(\mu,\ttt)+\chi_2(\mu,\ttt)\}$. Furthermore,
  \begin{equation*}
    \dimh(X_\ttt) = \udimm(X_\ttt) = \min\{2,s_0(\ttt)\}
  \end{equation*}
  for $\LL^{2N}$-almost all $\ttt\in U$ and $\LL^2(X_\ttt)>0$ for $\LL^{2N}$-almost all $\ttt \in U$ with $s_0(\ttt)>2$.
\end{theorem}

Theorems~\ref{thm:mainex1} and \ref{thm:mainex1b} are consequences of a more general theorem, Theorem \ref{thm:meta}, presented in Section~\ref{sec:main}. A pivotal assumption in the theorem is a suitable transversality condition for general planar non-conformal iterated function systems which we introduce to guarantee that there is no dimension drop for almost every parameter. In Section \ref{sec:verify-examples}, we verify that the assumptions \ref{it:1}--\ref{it:3} and \ref{it:1b}--\ref{it:3b} imply this transversality condition.

\section{Weak Ledrappier-Young formula}

One of the main tools in the study of the dimension theory of iterated function systems is the Ledrappier-Young formula. In this section, we do not intend to generalize the formula in its strongest form. We only give a lower bound for the dimension of our planar invariant measure by its non-linear projections. Before stating this weak Ledrappier-Young formula, we go through some preliminaries. 

Let $\Phi = (F_i)_{i \in \{1,\ldots,N\}}$ be an IFS satisfying \ref{it:condLY1} and \ref{it:condLY2}. Let us define the slope of the strong-stable tangent bundle $u\colon\Sigma\times\R^2\to\R$ as follows: For each $\iii\in\Sigma$ and for every $(x,y)\in[0,1]^2$, let
\begin{equation}\label{eq:defofbundle}
u(\iii,x,y) = \sum_{k=1}^\infty\frac{-(g_{i_k})_x'(F_{\overleftarrow{\iii|_{k-1}}}(x,y))f_{\overleftarrow{\iii|_{k-1}}}'(x)}{(g_{\overleftarrow{\iii|_{k}}})'_y(x,y)}.
\end{equation}
It is easy to see that the series above is absolutely convergent by \ref{it:condLY2}. The importance of the mapping $u$ is that it forms an invariant bundle with respect to the derivatives, and the contraction of the map $F_{i_1}$ is the ``strongest'' along lines with slope $u(\iii,\,\cdot\,,\,\cdot\,)$. More precisely, by the definition \eqref{eq:defofbundle},
\begin{equation}\label{eq:bundleprop}
u(\iii,x,y)=-\frac{(g_{i_1})'_x(x,y)}{(g_{i_1})_y'(x,y)}+\frac{f_{i_1}'(x)}{(g_{i_1})_y'(x,y)}u(\sigma\iii,F_{i_1}(x,y))
\end{equation}
and hence,
$$
D_{(x,y)}F_{i_1}\begin{pmatrix}
	1 \\
	u(\iii,x,y)
\end{pmatrix}=f_{i_1}'(x)\begin{pmatrix}
	1\\
	u(\sigma\iii,F_{i_1}(x,y))
\end{pmatrix}.
$$
for all $\iii=i_1i_2\cdots\in\Sigma$ and $(x,y)\in[0,1]^2$.

Let us next define the strong-stable foliation corresponding to the strong-stable tangent bundle. In other words, for each $\iii\in\Sigma$ we partition $[0,1]\times\R$ into $C^1$-smooth curves. By our assumptions, each of these curves can be represented as a graph of a $C^1$ map $x\mapsto y(x)$. More precisely, for each $\iii\in\Sigma$ and for every $(x_0,y_0)\in[0,1]^2$ there exists an ordinary differential equation with a unique $C^1$-solution denoted by $x\mapsto y(\iii,(x_0,y_0),x)$ such that its value equals $y_0$ at $x=x_0$ and its derivative with respect to $x$ corresponds to the slope $u(\iii,x,y(\iii,(x_0,y_0),x))$, that is,
\begin{equation}\label{eq:diffeq}
\begin{split}
	(y)'_x(\iii,(x_0,y_0),x) &= u(\iii,x,y(\iii,(x_0,y_0),x)), \\
	y(\iii,(x_0,y_0),x_0) &= y_0,
\end{split}
\end{equation}
or, alternatively,
$$
  y(\iii,(x_0,y_0),x)=y_0+\int_{x_0}^xu(\iii,z,y(\iii,(x_0,y_0),z))\dd z.
$$
Hence, for each $\iii\in\Sigma$ we can define the strong-stable foliation $\xi_\iii$ as
$$
  \xi_\iii(x_0,y_0) = \{(x,y(\iii,(x_0,y_0),x)):x\in[0,1]\}.
$$
The importance of this foliation is that it is invariant with respect to $\Phi$.

\begin{lemma}\label{lem:invariance}
	For each $\iii\in\Sigma$ and for every $(x_0,y_0)\in[0,1]^2$ it holds that
	\begin{equation*}
		g_{i_1}(x,y(\iii,(x_0,y_0),x)) = y(\sigma\iii,F_{i_1}(x_0,y_0),f_{i_1}(x))
	\end{equation*}
  for all $x\in[0,1]$.
\end{lemma}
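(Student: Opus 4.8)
The plan is to show that the two sides of the claimed identity, viewed as functions of $x$, satisfy the same first-order ordinary differential equation with the same initial condition, and then invoke uniqueness of solutions. Fix $\iii = i_1i_2\cdots \in \Sigma$ and $(x_0,y_0) \in [0,1]^2$. Write for brevity $Y(x) = y(\iii,(x_0,y_0),x)$ for the solution of \eqref{eq:diffeq}, and define the candidate
\begin{equation*}
  Z(x) = g_{i_1}(x,Y(x)), \qquad x \in [0,1].
\end{equation*}
On the other side, let $W(u) = y(\sigma\iii,F_{i_1}(x_0,y_0),u)$ be the solution of \eqref{eq:diffeq} with data at $u = f_{i_1}(x_0)$ equal to $g_{i_1}(x_0,y_0)$, i.e.\ $W'(u) = u(\sigma\iii,u,W(u))$ and $W(f_{i_1}(x_0)) = g_{i_1}(x_0,y_0)$. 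Since $f_{i_1}$ is a $C^2$ contraction of $[0,1]$, the composition $x \mapsto W(f_{i_1}(x))$ is a well-defined $C^1$ map on $[0,1]$; the goal is to prove $Z(x) = W(f_{i_1}(x))$ for all $x$.

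First I would check the initial condition: at $x = x_0$ we have $Z(x_0) = g_{i_1}(x_0, Y(x_0)) = g_{i_1}(x_0,y_0)$ since $Y(x_0) = y_0$, and $W(f_{i_1}(x_0)) = g_{i_1}(x_0,y_0)$ by construction, so the two agree at $x_0$. Next I would differentiate both functions in $x$. For the left side, the chain rule gives
\begin{equation*}
  Z'(x) = (g_{i_1})'_x(x,Y(x)) + (g_{i_1})'_y(x,Y(x))\,Y'(x) = (g_{i_1})'_x(x,Y(x)) + (g_{i_1})'_y(x,Y(x))\,u(\iii,x,Y(x)),
\end{equation*}
using \eqref{eq:diffeq}. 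Now apply the cocycle relation \eqref{eq:bundleprop} for the bundle slope, namely $u(\iii,x,y) = -\frac{(g_{i_1})'_x(x,y)}{(g_{i_1})'_y(x,y)} + \frac{f_{i_1}'(x)}{(g_{i_1})'_y(x,y)}\,u(\sigma\iii,F_{i_1}(x,y))$, evaluated at $y = Y(x)$. Substituting and simplifying, the $(g_{i_1})'_x$ terms cancel and one obtains
\begin{equation*}
  Z'(x) = f_{i_1}'(x)\,u(\sigma\iii,F_{i_1}(x,Y(x))) = f_{i_1}'(x)\,u(\sigma\iii,f_{i_1}(x),Z(x)),
\end{equation*}
because $F_{i_1}(x,Y(x)) = (f_{i_1}(x), g_{i_1}(x,Y(x))) = (f_{i_1}(x), Z(x))$ by the triangular form of $F_{i_1}$. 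On the other hand, differentiating $x \mapsto W(f_{i_1}(x))$ gives $f_{i_1}'(x)\,W'(f_{i_1}(x)) = f_{i_1}'(x)\,u(\sigma\iii, f_{i_1}(x), W(f_{i_1}(x)))$. Thus $Z$ and $x \mapsto W(f_{i_1}(x))$ solve the same ODE $\psi'(x) = f_{i_1}'(x)\,u(\sigma\iii,f_{i_1}(x),\psi(x))$ with the same value at $x_0$.

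Finally I would close the argument with a uniqueness statement: the right-hand side $(x,\psi) \mapsto f_{i_1}'(x)\,u(\sigma\iii,f_{i_1}(x),\psi)$ is continuous in $x$ and Lipschitz in $\psi$ uniformly on $[0,1] \times \R$ — this follows from $C^2$ regularity in \ref{it:condLY1} together with the uniform bounds \ref{it:condLY2} and the bounded distortion \eqref{eq:bd}, which make the series \eqref{eq:defofbundle} defining $u$ converge with locally Lipschitz dependence on the spatial variables — so Picard–Lindelöf gives a unique solution through $(x_0, g_{i_1}(x_0,y_0))$, forcing $Z(x) = W(f_{i_1}(x))$ on all of $[0,1]$, which is exactly the assertion. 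The only point requiring genuine care, and hence the main obstacle, is justifying the Lipschitz-in-$\psi$ property of $u(\sigma\iii,\cdot,\cdot)$: one must argue that the infinite series \eqref{eq:defofbundle} can be differentiated (or at least bounded in difference quotients) term by term, using the contraction of the $f$'s and the gap $\tau < \rho$ to dominate the tail; this is where the $C^2$ hypothesis and \ref{it:condLY2} are really used, and everything else is a short chain-rule computation.
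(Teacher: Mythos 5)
Your proposal is correct and follows essentially the same route as the paper: differentiate $x\mapsto g_{i_1}(x,y(\iii,(x_0,y_0),x))$ via the chain rule and the cocycle relation \eqref{eq:bundleprop}, observe that both sides solve the ODE $\psi'(x)=f_{i_1}'(x)\,u(\sigma\iii,f_{i_1}(x),\psi(x))$ with the same value at $x_0$, and conclude by Picard--Lindel\"of. Your extra remark about justifying the Lipschitz dependence of $u$ on the spatial variable is a fair point the paper leaves implicit; it is settled by the bound on $(u)'_y$ obtained later in \eqref{eq:uposy}--\eqref{eq:ubounded}.
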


\begin{proof}
Let us consider the derivative of the map $x\mapsto g_{i_1}(x,y(\iii,(x_0,y_0),x))$. By \eqref{eq:bundleprop}, we have
	\[
	\begin{split}
		\frac{\mathrm{d}}{\mathrm{d}x}g_{i_1}(x,y(\iii,(x_0,y_0),x)) &= (g_{i_1})'_x(x,y(\iii,(x_0,y_0),x)) \\
		&\qquad\qquad+ (g_{i_1})_y'(x,y(\iii,(x_0,y_0),x))u(\iii,x,y(\iii,(x_0,y_0),x)) \\
		&= u(\sigma\iii,F_{i_1}(x,y(\iii,(x_0,y_0),x)))f_{i_1}'(x) \\
		&= u(\sigma\iii,f_{i_1}(x),g_{i_1}(x,y(\iii,(x_0,y_0),x)))f_{i_1}'(x).
	\end{split}
	\]
	On the other hand, the derivative of the map $x\mapsto y(\sigma\iii,F_{i_1}(x_0,y_0),f_{i_1}(x))$ is
	\[\begin{split}
	\frac{\mathrm{d}}{\mathrm{d}x}y(\sigma\iii,F_{i_1}(x_0,y_0),f_{i_1}(x))&=(y)'_x(\sigma\iii,F_{i_1}(x_0,y_0),f_{i_1}(x)))f_{i_1}'(x)\\
	&=u(\sigma\iii,f_{i_1}(x),y(\sigma\iii,F_{i_1}(x_0,y_0),f_{i_1}(x)))f_{i_1}'(x).
	\end{split}\]
	Since $g_{i_1}(x_0,y(\iii,(x_0,y_0),x_0))=g_{i_1}(x_0,y_0)=y(\sigma\iii,F_{i_1}(x_0,y_0),f_{i_1}(x_0))$ by \eqref{eq:diffeq}, the claim follows from the classical Picard-Lindel\"of Theorem; see, for example, \cite[Theorem~3.2]{ODE}.
\end{proof}

For each $\iii\in\Sigma$, let us define a non-linear projection $\proj_\iii \colon \Sigma \to \R$ by setting
\begin{equation}\label{eq:nonlinproj}
\proj_\iii(\jjj)=y(\iii,\pi(\jjj),0)
\end{equation}
for all $\jjj \in \Sigma$. For the measurable partition $\{\pi^{-1}(x,y)\}_{(x,y)\in[0,1]^2}$ of $\Sigma$, let $\{\mu_{\iii}^{\pi}\}_{\iii\in\Sigma}$ be the family of conditional measures supported on the partition element $\pi^{-1}(\pi(\iii))$. Note that such a family of measures is defined uniquely up to a zero measure set; see, for example, Simmons \cite{Simmons}. Finally, for each $\mu \in \MM_\sigma(\Sigma)$, let us define the reversed measure $\overleftarrow{\mu} \in \MM_\sigma(\Sigma)$ by setting $\overleftarrow{\mu}([\iii])=\mu([\overleftarrow{\iii}])$ for all $\iii\in\Sigma_*$ and extending in usual manner. We are now ready to state and prove the weak Ledrappier-Young formula.

\begin{theorem}[Weak Ledrappier-Young formula] \label{thm:LY}
Suppose that $\Phi$ satisfies the assumptions \ref{it:condLY1} and \ref{it:condLY2}. If $\mu \in \MM_\sigma(\Sigma)$ is quasi-Bernoulli, then
\begin{equation}\label{eq:LY}
	\ldimh(\pi_*\mu) \ge \frac{h(\mu)-H(\mu)}{\chi_2(\mu)} + \biggl(1-\frac{\chi_1(\mu)}{\chi_2(\mu)}\biggr)\dim((\proj_\iii)_*\mu)
\end{equation}
for $\overleftarrow{\mu}$-almost all $\iii$, where $H(\mu)=-\int\log\mu_\jjj^{\pi}([j_1])\dd\mu(\jjj)$.
\end{theorem}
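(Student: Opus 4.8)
The plan is to bound the lower local dimension $\ldimloc(\pi_*\mu,\pi(\jjj))=\liminf_{r\downarrow0}\log\pi_*\mu(B(\pi(\jjj),r))/\log r$ from below by the right‑hand side of \eqref{eq:LY} for $\mu$‑almost every $\jjj$; by the mass distribution principle this implies the asserted bound on $\ldimh(\pi_*\mu)$. Since $\ldimh(\pi_*\mu)$, $h(\mu)$, $H(\mu)$, $\chi_1(\mu)$ and $\chi_2(\mu)$ are unchanged when $\mu$ is replaced by an equivalent ergodic $\sigma$‑invariant quasi‑Bernoulli measure, one may assume $\mu$ ergodic. One then records the following $\mu$‑almost sure limits: $-\tfrac1n\log\mu([\jjj|_n])\to h(\mu)$ by the Shannon--McMillan--Breiman theorem; $-\tfrac1n\log|f_{\jjj|_n}'(x)|\to\chi_2(\mu)$ and $-\tfrac1n\log|(g_{\jjj|_n})'_y(x,y)|\to\chi_1(\mu)$ uniformly in $(x,y)$ by Birkhoff's theorem and the bounded distortion \eqref{eq:bd}; $-\tfrac1n\log\mu_\jjj^\pi([\jjj|_n])\to H(\mu)$ by Kingman's theorem applied to the conditional measures (using the quasi‑Bernoulli property); and, combining the last two with the martingale convergence that identifies $\mu_\jjj^\pi$ as the weak limit of the normalised restrictions of $\mu$ to $\pi^{-1}(F_{\jjj|_n}([0,1]^2))$, $-\tfrac1n\log\pi_*\mu(F_{\jjj|_n}([0,1]^2))\to h(\mu)-H(\mu)$. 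One also needs, as a separate preliminary, that $(\proj_\iii)_*\mu$ is exact‑dimensional for $\overleftarrow\mu$‑almost every $\iii$ with a dimension $\beta=\dim((\proj_\iii)_*\mu)$ that does not depend on the (good) $\iii$ --- this is what gives meaning to \eqref{eq:LY} --- and I would expect this to require a reverse‑martingale analysis of the conditional measures exploiting the quasi‑Bernoulli property and Lemma~\ref{lem:invariance}. The reversal in the statement reflects that, by Lemma~\ref{lem:invariance} applied along a cylinder, the restriction of $\xi_\iii$ to $F_{\jjj|_n}([0,1]^2)$ is the $F_{\jjj|_n}$‑image of the foliation $\xi_{(\jjj|_n)\iii}$, so the strong‑stable tangent direction at a $\mu$‑generic point is governed by a reversed itinerary.

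The geometric picture is as follows. Given small $r$, choose the generation $n=n(\jjj,r)$ with $|f_{\jjj|_n}'|\asymp r$ (uniformly in $x$, by \eqref{eq:bd}); then $n\asymp-\log r/\chi_2(\mu)$, and since $|(g_{\jjj|_n})'_x|\lesssim|(g_{\jjj|_n})'_y|$ --- which follows from \eqref{eq:bundleprop} and assumption \ref{it:condLY2} --- the cylinder $F_{\jjj|_n}([0,1]^2)$ is an anisotropic set of extent $\asymp r$ along the leaves of $\xi_\iii$ (roughly the $x$‑direction) and of extent $\asymp|(g_{\jjj|_n})'_y|\asymp r^{\chi_1(\mu)/\chi_2(\mu)}$ transversally to them; note $r^{\chi_1/\chi_2}\gg r$ because $\chi_1(\mu)<\chi_2(\mu)$. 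Consequently the ball $B(\pi(\jjj),r)$ covers, up to a bounded factor, the whole leaf‑direction extent of this cylinder but only an $\asymp r/r^{\chi_1/\chi_2}=r^{1-\chi_1/\chi_2}$ fraction of its transverse extent, and it meets only cylinders $F_\www([0,1]^2)$, $\www\in\Sigma_n$, contained in a rectangle $R=R(\jjj,r)$ of the same shape $\asymp r\times r^{\chi_1/\chi_2}$ about $\pi(\jjj)$.

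The estimate then reduces to two inequalities valid, for $\mu$‑almost every $\jjj$, for all small $r$ and each fixed $\eps>0$, with $n=n(\jjj,r)$. The first is an overlap bound,
\[
  \pi_*\mu(R(\jjj,r))\ \le\ e^{-n(h(\mu)-H(\mu)-\eps)},
\]
which follows from $-\tfrac1n\log\pi_*\mu(F_{\jjj|_n}([0,1]^2))\to h(\mu)-H(\mu)$ above (the multiplicity of generation‑$n$ cylinders that pile up near $\pi(\jjj)$ is $\asymp e^{nH(\mu)}$), absorbing the bounded enlargement from the reference cylinder to $R$. The second is a transverse‑spread bound: for each $\www\in\Sigma_n$ with $F_\www([0,1]^2)\cap B(\pi(\jjj),r)\neq\emptyset$,
\[
  \mu(\{\kkk\in[\www]:\pi(\kkk)\in B(\pi(\jjj),r)\})\ \le\ \mu([\www])\,r^{(1-\chi_1(\mu)/\chi_2(\mu))(\beta-\eps)},
\]
obtained by using the quasi‑Bernoulli property to compare $\mu|_{[\www]}$ with a rescaled copy of $\mu$, Lemma~\ref{lem:invariance} to transport $\xi_\iii$ into the cylinder, and the direction‑independent exact‑dimensionality of the transverse measure, which in the rescaled picture is tested against an interval of length $\asymp r^{1-\chi_1/\chi_2}$. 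Summing the second inequality over the relevant $\www$ and using the first (applied to a bounded enlargement of $R$),
\[
  \pi_*\mu(B(\pi(\jjj),r))\ \le\ r^{(1-\chi_1/\chi_2)(\beta-\eps)}\sum_{\www}\mu([\www])\ \le\ r^{(1-\chi_1/\chi_2)(\beta-\eps)}\,e^{-n(h(\mu)-H(\mu)-\eps)}.
\]
Since $e^{-n}\asymp r^{1/\chi_2(\mu)}$, letting $r\downarrow0$ and then $\eps\downarrow0$ gives $\ldimloc(\pi_*\mu,\pi(\jjj))\ge\tfrac{h(\mu)-H(\mu)}{\chi_2(\mu)}+(1-\tfrac{\chi_1(\mu)}{\chi_2(\mu)})\beta$, which is \eqref{eq:LY}.

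The step I expect to be the main obstacle is the transverse‑spread inequality, precisely because of the non‑linearity emphasised in the introduction: the leaves of $\xi_\iii$ are genuinely curved $C^1$ graphs coming from the ODE \eqref{eq:diffeq}, so the phrase ``rescaled copy of the transverse measure up to bounded distortion'' must be made rigorous through a bounded‑distortion estimate for the holonomies of $\xi_\iii$ and for the second‑coordinate maps $g_\www(0,\cdot)$ in the transverse direction --- a second‑order analogue of \eqref{eq:bd} --- which is automatic in the self‑affine setting only because there the leaves are straight lines. Tied to this are the establishment of exact‑dimensionality and direction‑independence for the family $\{(\proj_\iii)_*\mu\}_\iii$ (since $\proj_\iii$ is neither injective nor $\sigma$‑equivariant, this seems to need a genuine conditional‑measure argument rather than an appeal to existing self‑affine results) and the transfer of that information to the cylinder‑dependent parameters $(\jjj|_n)\iii$ that occur along a $\mu$‑generic point, which is ultimately where $\overleftarrow\mu$ enters through an ergodic averaging.
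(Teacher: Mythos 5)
Your proposal sketches a direct covering argument: pick the generation $n(\jjj,r)$ at which $|f_{\jjj|_n}'|\asymp r$, cover the ball by generation-$n$ cylinder images lying in an anisotropic rectangle $R\asymp r\times r^{\chi_1/\chi_2}$ aligned with $\xi_\iii$, and combine an ``overlap bound'' on $\pi_*\mu(R)$ with a ``transverse-spread bound'' inside each cylinder. That is a genuinely different route from the paper's, which instead lifts the system to $\R^3$ by an auxiliary self-similar coordinate so as to enforce strong separation, defines a natural extension $T$ on $[0,1]^3\times\Sigma$, builds three nested $T^{-1}$-invariant measurable foliations $\xi^0\succ\xi^1\succ\xi^2$, proves invariance of the conditional measures $\nu^{\xi^i}$ under $T^{-1}$, computes the local dimensions of these conditional measures on suitable ``transversal balls'' via Birkhoff's and Maker's ergodic theorems applied to the information cocycles $-\log\nu^{\xi^i}_{(\mathbf x,\iii)}(\mathcal P(\mathbf x,\iii))$, and finally assembles the pieces with \cite[Lemma~11.3.1]{LedrappierYoung2}. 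In exchange for its extra scaffolding, the paper's approach never needs to prove a doubling estimate for $\pi_*\mu$ on anisotropic rectangles nor a uniform holonomy-distortion bound; those are replaced by the invariance identity \eqref{eq:invmeas} together with \eqref{eq:containment}--\eqref{eq:contxi2} and ergodic averaging.

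There are three substantive gaps in your proposal, two of which you flag yourself but none of which is resolved, and they are not peripheral: they are essentially the whole content of the theorem. (i) The exact-dimensionality of $(\proj_\iii)_*\mu$ and its independence of $\iii$ for $\overleftarrow\mu$-a.e.\ $\iii$ is taken as a ``preliminary''; in the paper this is \emph{proved} as part of the argument, via the local dimension computation for the conditional measures $\nu^{\xi^1}_{(\mathbf x,\iii)}$. (ii) The overlap bound $\pi_*\mu(R(\jjj,r))\le e^{-n(h(\mu)-H(\mu)-\eps)}$ rests on ``absorbing the bounded enlargement from the reference cylinder to $R$''; but $R$ can intersect many cylinder images $F_\www([0,1]^2)$ with $\www\neq\jjj|_n$, and there is no a priori doubling estimate for $\pi_*\mu$ on such anisotropic rectangles in the non-affine setting. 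That estimate is exactly what the conditional measures $\nu^{\xi^2}$ together with \eqref{eq:contxi2} and the Birkhoff argument deliver. (iii) The transverse-spread bound needs, as you observe, a bounded-distortion control on the holonomies of the curved foliation $\xi_\iii$, together with a transfer of the dimension information to the cylinder-dependent itineraries $\overleftarrow{\jjj|_n}\iii$ (note: by Lemma~\ref{lem:invariance}, the restriction of $\xi_\iii$ to $F_{\jjj|_n}([0,1]^2)$ is the $F_{\jjj|_n}$-image of $\xi_{\overleftarrow{\jjj|_n}\iii}$, with the prefix \emph{reversed}, not $\xi_{(\jjj|_n)\iii}$ as written; this is why $\overleftarrow\mu$ and the $\overleftarrow{\jjj\wedge\hhh}$ notation appear in the paper). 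Completing any one of (i)--(iii) in the covering framework would require, in one form or another, the conditional-measure and ergodic-theorem machinery the paper actually uses, so the proposal as written is a plausible outline of the statement's meaning rather than a proof of it.
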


According to our best knowledge, Theorem~\ref{thm:LY} has not been stated earlier in this context and it is original contribution within this work. Compared to the usual Ledrappier-Young formula, it does not demonstrate the exact-dimensionality of the measure and only gives a lower bound for the dimension of the measure by means of the projection, entropy, and Lyapunov exponents. Nevertheless, this lower bound is still sufficient for studying the dimension and the proof is a slight modification of the proofs of \cite[Theorem~2.6]{BaranyKaenmaki2017} and \cite[Proposition~2]{Ledrappier92} for quasi-Bernoulli measures on dominated self-affine systems. For the convenience of the reader, we present below the necessary changes following \cite[Sections~6~and~7]{BaranyKaenmaki2017}. We strongly believe that the upper bound holds as well.

\begin{proof}[Sketch of the proof of Theorem~\ref{thm:LY}]
Let $\rho>0$ be such that $\rho<1/N$ and choose real numbers $t_1,\ldots,t_N$ such that the IFS $\{x\mapsto\rho x+t_i\}_{i=1}^N$ satisfies the strong separation condition and acts on $[0,1]$. Write $\hat\Phi = (\hat{F}_i)_{i \in \{1,\ldots,N\}}$, where $\hat{F}_i \colon \R^3 \to \R^3$, $\hat{F}_i(x,y,z) = (f_i(x),g_i(x,y),\rho z+t_i)$, lift the maps $F_i$ into $\R^3$ and make the system satisfy the strong separation condition. We denote the attractor of $\hat{\Phi}$ by $\hat{X}$ and the canonical projection by $\Pi\colon\Sigma\to\hat{X}$.

Define a dynamical system $T\colon[0,1]^3\times\Sigma\to[0,1]^3\times\Sigma$ by setting
$$
T(\mathbf{x},\iii)=(\hat{F}_{i_1}(\mathbf{x}),\sigma\iii).
$$
Since $\hat{\Phi}$ satisfies the strong separation condition, the inverse $T^{-1}$ is well-defined on $\hat{X}\times\Sigma$. Let $P_2$ denote the orthogonal projection from $[0,1]^3$ onto the first two coordinates. Note that we have $P_2\circ\Pi=\pi$. Let us define three $T^{-1}$-invariant foliations $\xi^0$, $\xi^1$, and $\xi^2$ of $\hat{X}\times\Sigma$ by setting, for each $(\mathbf{x},\iii)\in\hat{X}\times\Sigma$,
\begin{align*}
  \xi^0(\mathbf{x},\iii) &= \hat{X}\times\{\iii\}, \\
  \xi^1(\mathbf{x},\iii) &= \{\mathbf{y}\in\hat{X} : y(\iii,P_2(\mathbf{x}),0)=y(\iii,P_2(\mathbf{y}),0)\}\times\{\iii\}, \\
  \xi^2(\mathbf{x},\iii) &= \{\mathbf{y}\in\hat{X} : P_2(\mathbf{x})=P_2(\mathbf{y})\}\times\{\iii\}.
\end{align*}
Furthermore, let $\mathcal{P}(\mathbf{x},\iii) = \hat{F}_{j}(\hat{X}) \times \Sigma$, where $j$ is the unique symbol such that $\mathbf{x}\in\hat{F}_j(\hat{X})$. It follows that the foliations $\xi^i$, $i \in \{0,1,2\}$, are invariant, i.e.,
\begin{equation}\label{eq:invfoliation}
  \xi^i\vee\mathcal{P}=T\xi^i,
\end{equation}
where $(\xi^i\vee\mathcal{P})(\mathbf{x},\iii)=\xi^i(\mathbf{x},\iii)\cap\mathcal{P}(\mathbf{x},\iii)$ is the common refinement of the partitions $\xi^i$ and $\mathcal{P}$ and $T\xi^i$ is the partition $(T\xi^i)(\mathbf{x},\iii)=T(\xi(T^{-1}(\mathbf{x},\iii)))$. The cases $i \in \{0,2\}$ are straightforward and the case $i=1$ follows by Lemma~\ref{lem:invariance}. Let us denote by $\mathcal{P}^n$ the refinement of the partition $\mathcal{P}$ along the dynamics of $T^{-1}$, i.e.,
$$
  \mathcal{P}^n(\mathbf{x},\iii)=\mathcal{P}(\mathbf{x},\iii)\cap T(\mathcal{P}(T^{-1}(\mathbf{x},\iii)))\cap\cdots\cap T^{n-1}(\mathcal{P}(T^{-(n-1)}(\mathbf{x},\iii))).
$$
We use the convention that $\mathcal{P}^0(\mathbf{x},\iii)=\hat{X}\times\Sigma$. Finally, let us define the transversal balls by setting
\begin{align*}
	B_2^T((\mathbf{x},\iii),\delta) &= \{(\mathbf{y},\jjj)\in\xi^1(\mathbf{x},\iii):|P_2(\mathbf{x})-P_2(\mathbf{y})|<\delta\}, \\
	B_1^T((\mathbf{x},\iii),\delta) &= \{(\mathbf{y},\jjj)\in\xi^0(\mathbf{x},\iii):|y(\iii,P_2(\mathbf{x}),0)-y(\iii,P_2(\mathbf{y}),0)|<\delta\}.
\end{align*}
It is easy to see, by the bounded distortion \eqref{eq:bd} and the invariance of the foliations \eqref{eq:invfoliation}, that there exists $c>0$ such that
\begin{equation}\label{eq:containment}
\begin{split}
  B_i^T(T(\mathbf{x},\iii),c^{-1}\delta)\cap\mathcal{P}(T(\mathbf{x},\iii)) &\subseteq T(B_i^T((\mathbf{x},\iii),\delta)) \\ 
  &\subseteq B_i^T(T(\mathbf{x},\iii),c\delta)\cap\mathcal{P}(T(\mathbf{x},\iii))
\end{split}
\end{equation}
for all $(\mathbf{x},\iii)\in\hat{X}\times\Sigma$ and $\delta>0$.

Let $\mu \in \MM_\sigma(\Sigma)$ be a quasi-Bernoulli measure. Recall that $\mu$ is automatically ergodic. Furthermore, there exists a unique ergodic $T^{-1}$-invariant Borel probability measure $\nu$ such that $\nu$ is equivalent to the measure $\Pi_*\mu\times\overleftarrow{\mu}$ on $\hat{X}\times\Sigma$. To simplify notation, let us denote the measure $\pi_*\mu$ by $m$. Let $\nu^{\xi^i}_{(\mathbf{x},\iii)}$ be the family of conditional measures with respect to the foliation $\xi^i$; see \cite[Theorem~2.1 and Theorem~2.2]{Simmons}. It follows that $\nu_{(\mathbf{x},\iii)}^{\xi^0}$ is equivalent to $\Pi_*\mu$ and $(P_2)_*\nu_{(\mathbf{x},\iii)}^{\xi^0}$ is equivalent to $m$ for $\nu$-almost all $(\mathbf{x},\iii)$. For each $\iii\in\Sigma$, let $\mu_{(\jjj,\iii)}^{\proj}$ be the family of conditional measures with respect to the planar foliation $\{\proj_{\iii}^{-1}(\proj_{\iii}(\jjj))\}$. It is easy to see that $(P_2)_*\nu^{\xi^1}_{\Pi(\jjj),\iii}$ is equivalent to $m_{(\pi(\jjj),\iii)}^{\proj}=\pi_*\mu_{(\jjj,\iii)}^{\proj}$ for $\mu$-almost all $\jjj$.

By following the argument of \cite[Lemma~6.3]{BaranyKaenmaki2017}, one can show, by relying on \eqref{eq:containment} and the uniqueness of the conditional measures, that
\begin{equation}\label{eq:invmeas}
	\nu^{\xi^i}_{T^{-1}(\mathbf{x},\iii)}=(T^{-1})_*\nu^{\xi^i\vee\mathcal{P}}_{(\mathbf{x},\iii)}
\end{equation}
for $\nu$-almost all $(\mathbf{x},\iii)$. By induction, one can also show that
\begin{equation}\label{eq:toharmonic}
	\frac{\nu_{(\mathbf{x},\iii)}^{\xi^i}(B_{i+1}^T\big((\mathbf{x},\iii),\delta\big)\cap\mathcal{P}^n(\mathbf{x},\iii))}{\nu_{(\mathbf{x},\iii)}^{\xi^i}(\mathcal{P}^n(\mathbf{x},\iii))}=\nu_{T^{-n}(\mathbf{x},\iii)}^{\xi^i}(T^{-n}(B_{i+1}^T\big((\mathbf{x},\iii),\delta\big))).
\end{equation}
for all $n\in\N$. By the bounded distortion \eqref{eq:bd}, there exists $C>0$ such that for each $(\jjj,\iii)\in\Sigma\times\Sigma$ we have
\begin{align*}
	B_1^T(T^{-n}(\Pi(\jjj),\iii),C^{-1}\delta\|(g_{\jjj|_n})'_y\|^{-1}) &\subseteq T^{-n}(B_1^T((\Pi(\jjj),\iii),\delta))\cap \xi^0(T^{-n}(\Pi(\jjj),\iii))\\
	&\subseteq B_1^T(T^{-n}(\Pi(\jjj),\iii),C\delta\|(g_{\jjj|_n})'_y\|^{-1})
\end{align*}
and
\begin{align}
		B_2^T(T^{-n}(\Pi(\jjj),\iii),C^{-1}\delta\|(f_{\jjj|_n})'\|^{-1})&\subseteq T^{-n}(B_2^T((\Pi(\jjj),\iii),\delta))\cap \xi^1(T^{-n}(\Pi(\jjj),\iii))\notag\\
		&\subseteq B_2^T(T^{-n}(\Pi(\jjj),\iii),C\delta\|(f_{\jjj|_n})'\|^{-1}). \label{eq:containfor2}
\end{align}
for all $\delta>0$ and $n\in\N$. The inclusions that there exists a constant $c>0$ such that
\begin{equation}\label{eq:contxi2}
\begin{split}
B((\mathbf{x},\iii),c\rho^n)\cap\xi^2(\mathbf{x},\iii)&\subseteq\mathcal{P}^n(\mathbf{x},\iii)\cap\xi^2(\mathbf{x},\iii) \\ 
&\subseteq B((\mathbf{x},\iii),c^{-1}\rho^n)\cap\xi^2(\mathbf{x},\iii)
\end{split}
\end{equation}
for all $n\in\N$ are straightforward by the strong separation condition of $\hat{\Phi}$.

Adapting the modifications explained above, one can now repeat the arguments of \cite[Lemma~7.4, Proposition~7.3, and Theorems~7.2 and 7.1]{BaranyKaenmaki2017} to obtain the claim. Nevertheless, let us go through the main steps of the proof. By using \eqref{eq:contxi2} and \eqref{eq:invmeas}, we have
\[
\begin{split}
\frac{\log\nu_{(\mathbf{x},\iii)}^{\xi^2}(B(\mathbf{x},\rho^n))}{n\log\rho} &\approx \frac{\log\nu_{(\mathbf{x},\iii)}^{\xi^2}(\mathcal{P}^n(\mathbf{x},\iii))}{n\log\rho}
=\frac{1}{n\log\rho}\sum_{k=0}^{n-1}\log\frac{\nu^{\xi^2}(\mathcal{P}^{k+1}(\mathbf{x},\iii))}{\nu^{\xi^2}(\mathcal{P}^{k}(\mathbf{x},\iii))} \\
&= \frac{1}{n\log\rho}\sum_{k=0}^{n-1}\log\nu^{\xi^2}_{T^{-k}(\mathbf{x},\iii)}(\mathcal{P}(T^{-k}(\mathbf{x},\iii))) \\ 
&\to \frac{\int\log\nu_{(\mathbf{y},\jjj)}^{\xi^2}(\mathcal{P}(\mathbf{y},\jjj))d\nu(\mathbf{y},\iii)}{\log\rho}
\end{split}
\]
as $n \to \infty$ for $\nu$-almost all $(\mathbf{x},\iii)$. Then for every $n\in\N$, by using \eqref{eq:containment}, \eqref{eq:toharmonic}, \eqref{eq:containfor2}, and Maker's ergodic theorem, we have
\begin{align*}
	&\frac{\log\nu_{(\Pi(\jjj),\iii)}^{\xi^1}(B_2^T((\Pi(\jjj),\iii),\|f_{\jjj|_{k}}'\|)}{\log\|f_{\jjj|_{k}}'\|}\\
	&\quad= \frac{1}{\log\|f_{\jjj|_{k}}'\|}\sum_{\ell=0}^{k-1}\log\frac{\nu_{T^{-\ell}(\Pi(\jjj),\iii)}^{\xi^1}(T^{-\ell}(B_2^T((\Pi(\jjj),\iii),\|f_{\jjj|_{k}}'\|)))}{\nu_{T^{-(\ell+1)}(\Pi(\jjj),\iii)}^{\xi^1}(T^{-(\ell+1)}(B_2^T((\Pi(\jjj),\iii),\|f_{\jjj|_{k}}'\|)))}\\
	&\quad\approx \frac{1}{\log\|f_{\jjj|_{k}}'\|}\sum_{\ell=0}^{k-1}\log\frac{\nu_{T^{-\ell}(\Pi(\jjj),\iii)}^{\xi^1}(B_2^T(T^{-\ell}(\Pi(\jjj),\iii),\|f_{\sigma^{\ell}\jjj|_{k-\ell}}'\|)}{\nu_{T^{-(\ell+1)}(\Pi(\jjj),\iii)}^{\xi^1}(T^{-n}(B_2^T(T^{-\ell}(\Pi(\jjj),\iii),\|f_{\sigma^{\ell}\jjj|_{k-\ell}}'\|)}\\
  &\quad= \frac{1}{\log\|f_{\jjj|_{k}}'\|}\sum_{\ell=0}^{k-1}\biggl(\log\frac{\nu_{T^{-\ell}(\Pi(\jjj),\iii)}^{\xi^1}(B_2^T(T^{-\ell}(\Pi(\jjj),\iii),\|f_{\sigma^{\ell}\jjj|_{k-\ell}}'\|)}{\nu_{T^{-\ell}(\Pi(\jjj),\iii)}^{\xi^1}(B_2^T(T^{-\ell}(\Pi(\jjj),\iii),\|f_{\sigma^{\ell}\jjj|_{k-\ell}}'\|)\cap\mathcal{P}(T^{-\ell}(\Pi(\jjj),\iii)))} \\
  &\quad\qquad\qquad\qquad\;\;\,+\log\frac{\nu_{T^{-\ell}(\Pi(\jjj),\iii)}^{\xi^1}(\mathcal{P}(T^{-\ell}(\Pi(\jjj),\iii)))}{\nu_{T^{-\ell}(\Pi(\jjj),\iii)}^{\xi^1}(B_2^T(T^{-\ell}(\Pi(\jjj),\iii),\|f_{\sigma^{\ell}\jjj|_{k-\ell}}'\|)\cap\mathcal{P}(T^{-\ell}(\Pi(\jjj),\iii)))}\biggr) \\
	&\quad\to \frac{-\int\log\nu_{(\mathbf{y},\iii)}^{\xi^1}(\mathcal{P}(\mathbf{y},\iii))\dd\nu(\mathbf{y},\iii)+\int\log\nu_{(\mathbf{y},\iii)}^{\xi^2}(\mathcal{P}(\mathbf{y},\iii))\dd\nu(\mathbf{y},\iii)}{\chi_2(\mu)}
\end{align*}
as $k \to \infty$ for $\nu$-almost all $(\Pi(\jjj),\iii)$. One can show similarly that
\begin{align*}
  &\frac{\log\nu_{(\Pi(\jjj),\iii)}^{\xi^0}(B_1^T((\Pi(\jjj),\iii),\|(g_{\jjj|_{k}})'_y\|)}{\log\|(g_{\jjj|_{k}})'_y\|} \\ 
  &\qquad\qquad\to\frac{-\int\log\nu_{(\mathbf{y},\iii)}^{\xi^0}(\mathcal{P}(\mathbf{y},\iii))d\nu(\mathbf{y},\iii)+\int\log\nu_{(\mathbf{y},\iii)}^{\xi^1}(\mathcal{P}(\mathbf{y},\iii))d\nu(\mathbf{y},\iii)}{\chi_1(\mu)}
\end{align*}
as $k \to \infty$ for $\nu$-almost every $(\Pi(\jjj),\iii)$. It is easy to see that
$$
-\int\log\nu_{(\mathbf{y},\iii)}^{\xi^0}(\mathcal{P}(\mathbf{y},\iii))\dd\nu(\mathbf{y},\iii)=h(\mu)
$$
and 
$$
-\int\log\nu_{(\mathbf{y},\iii)}^{\xi^2}(\mathcal{P}(\mathbf{y},\iii))\dd\nu(\mathbf{y},\iii)=H(\mu).
$$
Recalling that the measures $(P_2)_*\nu_{(\mathbf{x},\iii)}^{\xi^0}$ and $m$ are equivalent as also the measures $(P_2)_*\nu_{(\mathbf{x},\iii)}^{\xi^1}$ and $m_{(P_2(\mathbf{x}),\iii)}^{\proj}$, we see that
$$
\dim(m_{(x,\iii)}^{\proj})=\dimloc(m_{(x,\iii)}^{\proj},x)=\frac{-\int\log\nu_{(\mathbf{y},\iii)}^{\xi^1}(\mathcal{P}(\mathbf{y},\iii))\dd\nu(\mathbf{y},\iii)-H(\mu)}{\chi_2(\mu)}
$$
and
$$
\dim((\proj_{\iii})_*\mu)=\dimloc((\proj_{\iii})_*m,\proj_{\iii}(x))=\frac{h(\mu)+\int\log\nu_{(\mathbf{y},\iii)}^{\xi^1}(\mathcal{P}(\mathbf{y},\iii))\dd\nu(\mathbf{y},\iii)}{\chi_1(\mu)}
$$
for $m\times\overleftarrow{\mu}$-almost all $(x,\iii)$. Applying \cite[Lemma~11.3.1]{LedrappierYoung2}, we conclude that
\begin{align*}
\ldimh(m)&\geq\dim(m_{(x,\iii)}^{\proj})+\dim((\proj_{\iii})_*\mu)\\
&=\frac{h(\mu)-H(\mu)}{\chi_2(\mu)}+\biggl(1-\frac{\chi_1(\mu)}{\chi_2(\mu)}\biggr)\frac{h(\mu)+\int\log\nu_{(\mathbf{y},\iii)}^{\xi^1}(\mathcal{P}(\mathbf{y},\iii))\dd\nu(\mathbf{y},\iii)}{\chi_1(\mu)}\\
&=\frac{h(\mu)-H(\mu)}{\chi_2(\mu)}+\biggl(1-\frac{\chi_1(\mu)}{\chi_2(\mu)}\biggr)\dim((\proj_\iii)_*\mu)
\end{align*}
for $m\times\overleftarrow{\mu}$-almost all $(x,\iii)$. This is what we wanted to prove.
\end{proof}

\section{Main theorem}\label{sec:main}

In this section, we introduce a general parametrized iterated function system satisfying \ref{it:condLY1} and \ref{it:condLY2} and state the main theorem for it. Fix $d,m \in \N$ and let $V$ and $W$ be open and bounded subsets of $\R^d$ and $\R^m$, respectively. For each $\vvv\in V$, $\www\in W$, and $i\in\{1,\ldots,N\}$, let $f_i^{\vvv}\colon[0,1]\to[0,1]$ and $g_i^{\www}\colon[0,1]^2\to[0,1]$ be such that
\begin{enumerate}[label=(T\arabic*)]
	\item\label{eq:cond1} the maps $x\mapsto f_i^{\vvv}(x)$ and $(x,y)\mapsto g_i^{\www}(x,y)$ are two times continuously differentiable and there exists a constant $C>0$ such that
	$$
	\max\{|(g_i^{\www})'_x(x,y)|,|(g_i^{\www})''_{xy}(x,y)|,|(g_i^{\www})''_{yy}(x,y)|\}<C
	$$
	for all $(x,y)\in[0,1]^2$ and $\www\in W$,
	\item\label{eq:cond2} there exist $\rho,\gamma,\tau\in(0,1)$ such that
	$$
    \tau < \gamma^{-1}|(f_i^\vvv)'(x)| < |(g_i^{\www})'_y(x,y)| < \rho
	$$
  for all $(x,y)\in[0,1]^2$ and $(\vvv,\www)\in V\times W$,
	\item\label{eq:cond3} for each $(x,y)\in[0,1]^2$, the maps $\vvv\mapsto f_i^{\vvv}(x)$ and $\www\mapsto g_i^{\www}(x,y)$ are continuous on $V$ and $W$, respectively.
	\setcounter{nameOfYourChoice}{\value{enumi}}
\end{enumerate}

Define a parametrized family of planar iterated function systems by setting $\Phi^\ttt = (F_i^{\vvv,\www})_{i \in \{1,\ldots,N\}}$, where $F_i^{\vvv,\www} \colon \R^2 \to \R^2$, $F_i^{\vvv,\www}(x,y)=(f_i^{\vvv}(x),g_i^{\www}(x,y))$ and $\ttt=(\vvv,\www)\in V\times W$. Note that the conditions \ref{eq:cond1}--\ref{eq:cond3} imply \ref{it:condLY1} and \ref{it:condLY2}. Furthermore, observe that all the iterates of the first coordinates of $F_{\iii}^{\vvv,\www}$ depend only on $\vvv$ while the second coordinates of $F_{\iii}^{\vvv,\www}$ when $|\iii|\geq2$ depend on both $\vvv$ and $\www$. Hence, we denote the iterates of the coordinate functions by $f_\iii^\vvv$ and $g_\iii^{\vvv,\www}$, respectively. For such parametrized families the canonical projection defined in \eqref{eq:natproj} is denoted by $\pi_\vvv = (\pi^1_\vvv,\pi^2_\vvv)$ and the non-linear projection defined in \eqref{eq:nonlinproj} is denoted by $\proj_\iii^{\vvv,\www}$.

We assume that $\Phi^\ttt$ satisfies also the following assertions \ref{eq:cond4}--\ref{eq:cond7}.
\begin{enumerate}[label=(T\arabic*)]
	\setcounter{enumi}{\value{nameOfYourChoice}}
	\item\label{eq:cond4} for every $\ttt_0=(\vvv_0,\www_0)\in V\times W$ and $\varepsilon>0$ there exists $\delta>0$ such that
	$$
	e^{-\varepsilon|\iii|}\leq\frac{|(g_{\iii}^{\ttt})'_y(x,y)|}{|(g_{\iii}^{\ttt_0})'_y(x,y)|}\leq e^{\varepsilon|\iii|}
	$$
  for all $\iii\in\Sigma_*$, $\ttt\in B(\ttt_0,\delta)$, and $(x,y)\in[0,1]^2$,
	\item\label{eq:cond5} for every $\vvv_0\in V$ there exists $C(\vvv_0)>0$ such that the following transversality condition holds
	$$
	\mathcal{L}^m(\{\www\in W:|\proj_\kkk^{\vvv_0,\www}(\iii)-\proj_\kkk^{\vvv_0,\www}(\jjj)|<r\}) \leq C(\vvv_0)r
	$$
  for all $\iii,\jjj,\kkk\in\Sigma$ with $i_1\neq j_1$,
	\item\label{eq:cond6} for every $\varepsilon>0$ and $\vvv_0\in V$ there exists $\delta>0$ such that
	$$
	e^{-\varepsilon|\iii|}\leq\frac{|(f_{\iii}^\vvv)'(x)|}{|(f_{\iii}^{\vvv_0})'(x)|}\leq e^{\varepsilon|\iii|}
	$$
  for all $\iii\in\Sigma_*$, $\vvv\in B\big(\vvv_0,\delta\big)$, and $x\in[0,1]$,
	\item\label{eq:cond7} there exists $C>0$ such that
	$$
	\mathcal{L}^d(\{\vvv\in V:|\pi^1_\vvv(\iii)-\pi^1_\vvv(\jjj)|<r\})\leq Cr
	$$
  for all $\iii,\jjj\in\Sigma$ with $i_1\neq j_1$.
\end{enumerate}
Let $s_0(\ttt)$ be the unique root of the subadditive pressure function $P_{\ttt}$ defined in \eqref{eq:subpressure} and recall that the Lyapunov dimension defined in \eqref{eq:upperboundformeasure} is
\begin{equation*} %\label{eq:lyapdimdef}
  \diml(\mu,\ttt) = \min\biggl\{2,\frac{h(\mu)}{\chi_1(\mu,\ttt)},1+\frac{h(\mu)-\chi_1(\mu,\ttt)}{\chi_2(\mu,\vvv)}\biggr\},
\end{equation*}
where we have emphasized the dependence on the parameter $\ttt=(\vvv,\www)\in V\times W$.

\begin{theorem}\label{thm:meta}
  Suppose that $\Phi^{\ttt}$ satisfies the assumptions \ref{eq:cond1}--\ref{eq:cond7}. If $\mu \in \MM_\sigma(\Sigma)$ is quasi-Bernoulli, then
  \begin{equation*}
    \dim((\pi_\ttt)_*\mu) = \diml(\mu,\ttt)
  \end{equation*}
  for $\LL^{d+m}$-almost all $\ttt\in V \times W$ and $(\pi_\ttt)_*\mu\ll\LL^2$ for $\LL^{d+m}$-almost all $\ttt \in V \times W$ for which $h(\mu)>\chi_1(\mu,\ttt)+\chi_2(\mu,\vvv)$. Furthermore,
  \begin{equation*}
    \dimh(X_\ttt) = \udimm(X_\ttt) = \min\{2,s_0(\ttt)\}
  \end{equation*}
  for $\LL^{d+m}$-almost all $\ttt\in V \times W$ and $\LL^2(X_\ttt)>0$ for $\LL^{d+m}$-almost all $\ttt \in V \times W$ with $s_0(\ttt)>2$.
\end{theorem}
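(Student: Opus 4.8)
The backbone is the statement on the measure: once $\dim((\pi_\ttt)_*\mu)=\diml(\mu,\ttt)$ is known for $\LL^{d+m}$-almost every $\ttt$ for each quasi-Bernoulli $\mu$, the statements on the attractor follow by a variational-principle-plus-countable-family argument, carried out at the end. Since the Feng--Simon bounds \eqref{eq:upperboundformeasure} and \eqref{eq:upperboundforset} give $\udimp((\pi_\ttt)_*\mu)\le\diml(\mu,\ttt)$ and $\udimm(X_\ttt)\le\min\{2,s_0(\ttt)\}$ for \emph{every} $\ttt$, and since $\ldimh\le\udimp$ always, it suffices to prove, for almost every $\ttt$, the lower bound $\ldimh((\pi_\ttt)_*\mu)\ge\diml(\mu,\ttt)$ together with the absolute continuity statement; exact-dimensionality is then automatic, as in the remark after Theorem~\ref{thm:mainex1b}. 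Throughout, I would fix a quasi-Bernoulli $\mu$, write $h=h(\mu)$, $\chi_1=\chi_1(\mu,\ttt)$, $\chi_2=\chi_2(\mu,\vvv)$, $m=(\pi_\ttt)_*\mu$, and treat the two regimes $h\le\chi_1+\chi_2$ and $h>\chi_1+\chi_2$ separately (note $\diml(\mu,\ttt)=2$ exactly when $h\ge\chi_1+\chi_2$, using $\chi_2>\chi_1$ from \ref{it:condLY2}).

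In the regime $h\le\chi_1+\chi_2$ the plan is to use the weak Ledrappier--Young formula, Theorem~\ref{thm:LY}, which gives by \eqref{eq:LY}, for $\overleftarrow\mu$-almost every $\iii$,
\begin{equation*}
  \ldimh(m)\ge\frac{h-H(\mu)}{\chi_2}+\Bigl(1-\frac{\chi_1}{\chi_2}\Bigr)\dim\bigl((\proj_\iii^{\vvv,\www})_*\mu\bigr),
\end{equation*}
and to show that for almost every $\ttt$ one has $H(\mu)=0$ and $\dim((\proj_\iii^{\vvv,\www})_*\mu)\ge\min\{1,h/\chi_1\}$ for $\overleftarrow\mu$-almost every $\iii$. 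For the first, a Fubini argument with the first-level transversality \ref{eq:cond7} shows that for $\LL^d$-almost every $\vvv$ the set of pairs $(\iii,\jjj)$ with $i_1\ne j_1$ and $\pi^1_\vvv(\iii)=\pi^1_\vvv(\jjj)$ is $\mu\times\mu$-null; propagating this through the dynamics of $\Phi^\ttt$ using quasi-Bernoullicity shows $\pi_\ttt$ is $\mu$-a.e.\ injective, so the conditional measures $\mu_\jjj^\pi$ are Dirac and $H(\mu)=0$. For the second, I would fix $\vvv_0$ and work on a small ball $B(\ttt_0,\delta)$ on which \ref{eq:cond4} freezes $\chi_1$ up to $\eps$; a common-prefix reduction via the leaf-invariance of Lemma~\ref{lem:invariance}, in which the \emph{weak}-stable rate $|(g_\hhh^{\vvv_0,\www})'_y|$ appears as the scaling factor (because $\proj$ is the projection \emph{along} the strong-stable leaves, the off-diagonal part being dominated), turns \ref{eq:cond5} and the bounded distortion \eqref{eq:bd} into $\LL_m(\{\www\in B(\ttt_0,\delta):|\proj_\kkk^{\vvv_0,\www}(\iii)-\proj_\kkk^{\vvv_0,\www}(\jjj)|<r\})\lesssim r\,|(g_{\iii\wedge\jjj}^{\vvv_0,\www})'_y|^{-1}$, uniformly in $\kkk$; combining this, at each dyadic scale, with the Shannon--McMillan--Breiman theorem applied to $\mu$ and a Borel--Cantelli argument bounds the lower local dimension of $(\proj_\iii^{\vvv_0,\www})_*\mu$ from below by $\min\{1,h/\chi_1\}$ for $\LL^m$-almost every $\www\in B(\ttt_0,\delta)$, and a countable cover of $V\times W$ followed by $\delta,\eps\downarrow0$ gives the claim. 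Substituting $H(\mu)=0$ and the bound on $\dim((\proj_\iii)_*\mu)$ into the displayed inequality and distinguishing $h\le\chi_1$ (the right-hand side becomes $h/\chi_1=\diml(\mu,\ttt)$) from $\chi_1<h\le\chi_1+\chi_2$ (it becomes $1+(h-\chi_1)/\chi_2=\diml(\mu,\ttt)$, using $\chi_2>\chi_1$) finishes this regime.

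In the regime $h>\chi_1+\chi_2$ the plan is to prove $(\pi_\ttt)_*\mu\ll\LL^2$ for almost every such $\ttt$ directly, which simultaneously yields $\dim((\pi_\ttt)_*\mu)=2=\diml(\mu,\ttt)$ and the absolute continuity statement. I would show that $\ttt\mapsto\liminf_{r\downarrow0}r^{-2}(\pi_\ttt)_*\mu(B(\pi_\ttt(\iii),r))$ is finite for almost every $\ttt$, which forces absolute continuity by the Lebesgue density theorem. Fix $\ttt_0$ with $h>\chi_1(\mu,\ttt_0)+\chi_2(\mu,\vvv_0)$ and a small ball around it; using that $\pi^1$ is independent of $\www$, relating $\pi^2$ to $\proj$ along the leaves, applying \ref{eq:cond5} in $\www$ and \ref{eq:cond7} in $\vvv$ together with \ref{eq:cond4}, \ref{eq:cond6}, and the common-prefix reduction with the diagonal rates $|(f_\hhh)'|$ and $|(g_\hhh)'_y|$, one obtains
\begin{equation*}
  \int_V\int_W r^{-2}(\pi_{\vvv,\www})_*\mu\bigl(B(\pi_{\vvv,\www}(\iii),r)\bigr)\dd\www\dd\vvv\lesssim\sum_{\ell=0}^{\infty}\frac{\mu([\iii|_\ell])}{|(f_{\iii|_\ell}^{\vvv_0})'|\,|(g_{\iii|_\ell}^{\ttt_0})'_y|}\,e^{\eps\ell},
\end{equation*}
and for $\mu$-typical $\iii$ the Shannon--McMillan--Breiman and Birkhoff ergodic theorems give $\mu([\iii|_\ell])\approx e^{-\ell h}$, $|(f_{\iii|_\ell}^{\vvv_0})'|\approx e^{-\ell\chi_2}$, $|(g_{\iii|_\ell}^{\ttt_0})'_y|\approx e^{-\ell\chi_1}$, so the right-hand side is comparable to $\sum_\ell e^{-\ell(h-\chi_1-\chi_2-2\eps)}$, which is finite for $\eps$ small precisely because $h>\chi_1+\chi_2$. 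Fatou and Fubini then give the finiteness of the lower density for almost every $\ttt$ in the ball, and covering the open set $\{\ttt:h(\mu)>\chi_1(\mu,\ttt)+\chi_2(\mu,\vvv)\}$ by countably many such balls completes the measure statement.

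For the attractor, by the variational principle for the subadditive pressure one has $\min\{2,s_0(\ttt)\}=\sup\{\diml(\mu,\ttt):\mu\in\MM_\sigma(\Sigma)\}$, and every ergodic measure is approximated, in entropy and in both Lyapunov exponents, by quasi-Bernoulli measures (for instance, Bernoulli measures on the iterated systems with rational weights); using the continuity of $\ttt\mapsto s_0(\ttt)$ and $\ttt\mapsto\diml(\mu,\ttt)$, which follows from \ref{eq:cond3}, \ref{eq:cond4}, and \ref{eq:cond6}, one extracts a single countable family $\{\mu_k\}$ of quasi-Bernoulli measures with $\sup_k\diml(\mu_k,\ttt)=\min\{2,s_0(\ttt)\}$ for \emph{every} $\ttt$. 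Applying the measure statement to each $\mu_k$ and intersecting the countably many full-measure parameter sets gives $\dimh(X_\ttt)\ge\sup_k\dim((\pi_\ttt)_*\mu_k)=\min\{2,s_0(\ttt)\}$ for almost every $\ttt$, which with \eqref{eq:upperboundforset} proves $\dimh(X_\ttt)=\udimm(X_\ttt)=\min\{2,s_0(\ttt)\}$. Finally, if $s_0(\ttt)>2$ then the value $P_\ttt(2)$ of the subadditive pressure is positive, and since $P_\ttt(2)=\sup_\mu\{h(\mu)-\chi_1(\mu,\ttt)-\chi_2(\mu,\vvv)\}$ there is a quasi-Bernoulli $\mu$ with $h(\mu)>\chi_1(\mu,\ttt)+\chi_2(\mu,\vvv)$; choosing such measures along a countable family as above and invoking the absolute continuity statement gives $\LL^2(X_\ttt)\ge\LL^2(\spt((\pi_\ttt)_*\mu))>0$ for almost every $\ttt$ with $s_0(\ttt)>2$. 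The step I expect to be most delicate is the transversality-to-geometry passage carried out twice above: performing the common-prefix reduction with the correct weak-stable scaling $|(g_\hhh)'_y|$ uniformly in the auxiliary word $\kkk$, and combining the scale-by-scale transversality estimates with Shannon--McMillan--Breiman to reach the \emph{sharp} lower bound $\min\{1,h/\chi_1\}$ for $\dim((\proj_\iii^{\vvv,\www})_*\mu)$ and the \emph{sharp} absolute-continuity threshold $h>\chi_1+\chi_2$ --- a bare energy estimate would only yield the smaller correlation-type dimension and a stricter threshold --- all while keeping the parameter dependence of the Lyapunov exponents controlled via \ref{eq:cond4} and \ref{eq:cond6}.
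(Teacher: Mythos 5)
Your overall architecture --- Feng--Simon upper bounds, the weak Ledrappier--Young formula in the regime $\diml\le 2$, a separate density argument for absolute continuity when $h>\chi_1+\chi_2$, SSU for the first coordinate, Fubini over $\vvv$ and $\www$, and a continuity-plus-equilibrium-measure argument for the attractor --- matches the paper's decomposition into Propositions~\ref{prop:main1}--\ref{prop:main3} and Lemma~\ref{lem:cont}. (Your variants --- a direct two-dimensional density estimate instead of the paper's disintegration through $(\pi^1_{\vvv_0})_*\mu$ and Proposition~\ref{prop:main3}, and a countable family of quasi-Bernoulli measures instead of the paper's Lebesgue-density-point argument with the equilibrium state $\mu_{\ttt_0}$ of \eqref{eq:equimeasure} --- are workable.) However, there is one genuine gap: your justification of $H(\mu)=0$.

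You derive $H(\mu)=0$ from \ref{eq:cond7} alone, by arguing that for $\LL^d$-a.e.\ $\vvv$ the set of pairs $(\iii,\jjj)$ with $i_1\ne j_1$ and $\pi^1_\vvv(\iii)=\pi^1_\vvv(\jjj)$ is $\mu\times\mu$-null, and then ``propagating'' to conclude that $\pi_\ttt$ is $\mu$-a.e.\ injective. This does not work. First, the statement that $\mu\times\mu(\{(\iii,\jjj):\pi^1_\vvv(\iii)=\pi^1_\vvv(\jjj)\})=0$ is equivalent to $(\pi^1_\vvv)_*\mu$ being non-atomic; it carries no injectivity information, since the fibres $(\pi^1_\vvv)^{-1}(x)$ can be large while still each having $\mu$-measure zero. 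Indeed, whenever $h(\mu)>\chi_2(\mu,\vvv)$ the fibres of $\pi^1_\vvv$ carry conditional entropy $h(\mu)-\chi_2(\mu,\vvv)>0$, so $\pi^1_\vvv$ is very far from essentially injective, and a.e.\ injectivity of $\pi_\ttt=(\pi^1_\vvv,\pi^2_\ttt)$ can only come from the \emph{second} coordinate separating points within a $\pi^1_\vvv$-fibre. Condition \ref{eq:cond7} involves only $\vvv$ and the maps $f_i^\vvv$, so no conclusion drawn from it can be sensitive to the choice of the $g_i^\www$; in particular it cannot rule out exact overlaps in the second coordinate for specific $\www$. The correct statement is that of Proposition~\ref{prop:main2}: under the hypothesis $\ldimh((\pi^1_{\vvv_0})_*\mu)>(h(\mu)-\chi_1(\mu,(\vvv_0,\www)))/\chi_2(\mu,\vvv_0)$ (which the paper verifies by the elementary inequality $\min\{1,h/\chi_2\}>(h-\chi_1)/\chi_2$ on the region $\diml<2$), one has $H_{\vvv_0,\www}(\mu)=0$ for $\LL^m$-a.e.\ $\www$ --- not for all $\ttt$. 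Its proof genuinely uses the transversality in $\www$ from \ref{eq:cond5} together with a covering of the fibre $\Omega_\hhh$ of $\pi^1$ by cylinders of topological entropy at most $h(\mu)-\chi_2(\mu)\ldimh(\pi^1)_*\mu+\varepsilon$, following Bara\'nski--Gutman--\'Spiewak; the entropy hypothesis is exactly what makes the cylinder count beat the contraction rate $e^{-|\kkk|(\chi_1-2\varepsilon)}$ of the second coordinate. You need to replace your injectivity claim by this argument (and correspondingly restrict the conclusion $H(\mu)=0$ to a full-measure set of $\www$ for each fixed $\vvv_0\in\tilde V$, which then fits into your Fubini scheme). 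The rest of your proposal stands once this is repaired.
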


\subsection{Components of the proof}

The proof of the main theorem, Theorem~\ref{thm:meta}, can be decomposed to the proof of the following three propositions which together with the weak Ledrappier-Young formula, Theorem~\ref{thm:LY}, then imply the claim.

\begin{proposition}\label{prop:main1}
  Suppose that $\Phi^{\ttt}$ satisfies the assumptions \ref{eq:cond1}--\ref{eq:cond6}. If $\mu \in \MM_\sigma(\Sigma)$, then
  $$
    \ldimh((\proj_\iii^{(\vvv_0,\www)})_*\mu) \ge \min\biggl\{1,\frac{h(\mu)}{\chi_1(\mu,{(\vvv_0,\www)})}\biggr\}
  $$
  for all $\vvv_0\in V$, for $\LL^m$-almost all $\www\in W$ and for $\overleftarrow{\mu}$-almost all $\iii \in \Sigma$.
\end{proposition}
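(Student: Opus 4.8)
\emph{Plan.} The plan is to run a potential-theoretic transversality argument for the non-linear projections $\proj_\iii^{(\vvv_0,\www)}$, in the spirit of Jordan--Pollicott--Simon and Feng--Simon, adapted to the strong-stable foliation. Fix $\vvv_0\in V$ throughout and abbreviate $\proj_\iii^\www=\proj_\iii^{(\vvv_0,\www)}$, $\pi^\www=\pi_{(\vvv_0,\www)}$, $m_{\iii,\www}=(\proj_\iii^\www)_*\mu$ and $\chi_1(\www)=\chi_1(\mu,(\vvv_0,\www))$; by \ref{eq:cond2} we have $0<-\log\rho\le\chi_1(\www)\le-\log\tau<\infty$ and $\www\mapsto\chi_1(\www)$ is Borel. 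Recall the standard fact that if $\int|x-y|^{-s}\dd\nu(y)<\infty$ for $\nu$-almost every $x$, then $\ldimh(\nu)\ge s$. Since $\proj_\iii^\www$ pushes $\mu$ onto $m_{\iii,\www}$, it thus suffices to prove that for every rational $s\in(0,1)$, every rational $\eta>0$ and every compact $K\subseteq\{\www\in W:s\chi_1(\www)\le h(\mu)-\eta\}$ one has
\begin{equation*}
  \int_\Sigma|\proj_\iii^\www(\jjj)-\proj_\iii^\www(\kkk)|^{-s}\dd\mu(\kkk)<\infty
\end{equation*}
for $\LL^m$-almost every $\www\in K$, $\overleftarrow\mu$-almost every $\iii$, and $\mu$-almost every $\jjj$. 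Indeed, covering the set $\{\www:s\chi_1(\www)<h(\mu)\}=\bigcup_\eta\{\www:s\chi_1(\www)\le h(\mu)-\eta\}$ up to an $\LL^m$-null set by countably many such $K$, letting $s\uparrow\min\{1,h(\mu)/\chi_1(\www)\}$ along rationals, and interchanging the almost-every quantifiers in $\www$ and $\iii$ by Fubini's theorem then gives the proposition; the case $h(\mu)=0$ is trivial.

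\emph{Two estimates.} The argument rests on two inputs. First, a \textbf{contraction estimate} for the non-linear projections along cylinders: there is $C_1\ge1$, depending only on $\Phi$, so that for all $\iii,\jjj,\kkk\in\Sigma$, $\hhh\in\Sigma_*$ and $\www\in W$ the quantities $|\proj_\iii^\www(\hhh\jjj)-\proj_\iii^\www(\hhh\kkk)|$ and $\|(g_\hhh^{\vvv_0,\www})'_y\|\,|\proj_{\overleftarrow\hhh\iii}^\www(\jjj)-\proj_{\overleftarrow\hhh\iii}^\www(\kkk)|$ differ by a factor at most $C_1$. To see this, iterate Lemma~\ref{lem:invariance} to get $y(\iii,F_\hhh^{\vvv_0,\www}(x_0,y_0),f_\hhh^{\vvv_0}(x))=g_\hhh^{\vvv_0,\www}(x,y(\overleftarrow\hhh\iii,(x_0,y_0),x))$ for $(x_0,y_0)\in[0,1]^2$ and $x\in[0,1]$; evaluating at $x=0$ and $(x_0,y_0)\in\{\pi^\www(\jjj),\pi^\www(\kkk)\}$, recalling $\pi^\www(\hhh\jjj)=F_\hhh^{\vvv_0,\www}(\pi^\www(\jjj))$ and $\proj_{\overleftarrow\hhh\iii}^\www(\jjj)=y(\overleftarrow\hhh\iii,\pi^\www(\jjj),0)$, subtracting, and applying the mean value theorem in the $y$-variable together with the bounded distortion \eqref{eq:bd} shows that $|y(\iii,\pi^\www(\hhh\jjj),f_\hhh^{\vvv_0}(0))-y(\iii,\pi^\www(\hhh\kkk),f_\hhh^{\vvv_0}(0))|$ is comparable to $\|(g_\hhh^{\vvv_0,\www})'_y\|\,|\proj_{\overleftarrow\hhh\iii}^\www(\jjj)-\proj_{\overleftarrow\hhh\iii}^\www(\kkk)|$; finally, since \ref{eq:cond1} and \ref{eq:cond2} guarantee that the slope field $u$ is Lipschitz in $y$ uniformly in the remaining variables and that the leaves of $\xi_\iii$ stay in a fixed compact strip, Gr\"onwall's inequality shows that evaluating two leaves of $\xi_\iii$ at $x=0$ rather than at $x=f_\hhh^{\vvv_0}(0)\in[0,1]$ changes their distance only by a bounded factor. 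Second, a \textbf{transversality estimate}: since $s<1$, the distributional bound in \ref{eq:cond5} gives, by the layer-cake formula, a constant $C_2=C_2(\vvv_0,s)$ with $\int_W|\proj_\kkk^\www(\jjj)-\proj_\kkk^\www(\lll)|^{-s}\dd\www\le C_2$ for all $\kkk,\jjj,\lll\in\Sigma$ with $j_1\ne l_1$.

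\emph{Good cylinders and the energy bound.} Fix $s,\eta,K$ as above and set $\chi_*=\sup_{\www\in K}\chi_1(\www)\le(h(\mu)-\eta)/s$. Pick $\delta>0$ with $\delta(1+s)<\eta$. Combining \ref{eq:cond4} applied along a finite cover of $K$ by small balls with the Shannon--McMillan--Breiman theorem and the Birkhoff ergodic theorem (here we use that $\mu$ is ergodic, which is the case in all applications), call $\hhh\in\Sigma_m$ \emph{good} if $\mu([\hhh])\le e^{-m(h(\mu)-\delta)}$ and $\|(g_\hhh^{\vvv_0,\www})'_y\|\ge e^{-m(\chi_*+\delta)}$ for every $\www\in K$; then $\mu(\bigcup\{[\hhh]:\hhh\in\Sigma_m\text{ good}\})\to1$, so for $\mu$-almost every $\jjj$ the prefix $\jjj|_m$ is good for all large $m$, and by Egorov's theorem there are sets $A^{(M)}\subseteq\Sigma$ with $\mu(A^{(M)})\to1$ such that $\jjj|_m$ is good whenever $\jjj\in A^{(M)}$ and $m\ge M$. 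It remains to show that for each $M$,
\begin{equation*}
  \int_{A^{(M)}}\int\int_K\int_\Sigma|\proj_\iii^\www(\jjj)-\proj_\iii^\www(\kkk)|^{-s}\dd\mu(\kkk)\,\dd\www\,\dd\overleftarrow\mu(\iii)\,\dd\mu(\jjj)<\infty ,
\end{equation*}
which by Fubini's theorem and the fact recalled in the first paragraph yields the reduced statement and hence the proposition. Decompose $\Sigma\times\Sigma$ according to $m=|\jjj\wedge\kkk|$, writing $\hhh=\jjj\wedge\kkk\in\Sigma_m$, $\jjj'=\sigma^m\jjj$, $\kkk'=\sigma^m\kkk$, so that $j_1'\ne k_1'$. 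For $\jjj\in A^{(M)}$ and $m\ge M$ the word $\hhh$ is good, hence $\|(g_\hhh^{\vvv_0,\www})'_y\|^{-s}\le e^{sm(\chi_*+\delta)}$ for every $\www\in K$; applying the contraction estimate with $\hhh,\jjj',\kkk'$, pulling this factor out of the $\www$-integral and invoking the transversality estimate (with $\overleftarrow\hhh\iii$ in place of $\kkk$ and using $j_1'\ne k_1'$), the contribution of the cylinder pair $[\hhh j_1']\times[\hhh k_1']$ is at most $C_1^sC_2\,e^{sm(\chi_*+\delta)}\mu([\hhh j_1'])\mu([\hhh k_1'])$. Since $\sum\{\mu([\hhh])^2:\hhh\in\Sigma_m\text{ good}\}\le e^{-m(h(\mu)-\delta)}$, the part $m\ge M$ is bounded by $C_1^sC_2\sum_{m\ge M}e^{-m(h(\mu)-\delta-s\chi_*-s\delta)}$, a convergent geometric series because $h(\mu)-s\chi_*\ge\eta$ and $\delta(1+s)<\eta$; the finitely many terms with $m<M$ are finite, as there $\|(g_\hhh^{\vvv_0,\www})'_y\|^{-s}\le C\tau^{-sM}$ by \ref{eq:cond2} and \eqref{eq:bd} while the $\www$-integral is again controlled by the transversality estimate. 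This proves the displayed finiteness.

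\emph{Main obstacle.} The delicate point is the last one: inside the energy sum the parameter-dependent Lyapunov weights $\|(g_\hhh^{\vvv_0,\www})'_y\|$ cannot be extracted from the $\www$-integral against which the transversality estimate operates, and it is precisely the combination of the local comparability \ref{eq:cond4} with the Egorov-type decomposition into good cylinders that makes the two exponents line up so that the series converges \emph{exactly} in the range $s<h(\mu)/\chi_1(\www)$. A secondary technicality is the uniform-in-$\iii$ Lipschitz control of the slope field $u$ together with the confinement of the leaves of $\xi_\iii$ to a fixed compact strip, needed for the Gr\"onwall step in the contraction estimate. (For a general, possibly non-ergodic, $\mu\in\MM_\sigma(\Sigma)$ the same proof yields the inequality with the local entropy and the local first Lyapunov exponent in place of $h(\mu)$ and $\chi_1(\www)$; this is immaterial here, as the proposition is applied only to quasi-Bernoulli, hence ergodic, measures.)
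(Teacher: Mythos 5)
Your proposal is correct and takes essentially the same route as the paper's proof: an $s$-energy estimate decomposed over the length of the common prefix, with the contraction along cylinders obtained from Lemma~\ref{lem:invariance}, Lemma~\ref{lem:dist} and the mean value theorem, the parameter integral controlled by the transversality condition \ref{eq:cond5}, and Egorov-type good sets coming from the Shannon--McMillan--Breiman and Birkhoff theorems making the geometric series converge. The only differences are cosmetic (you localize in $\www$ via compact sublevel sets of $\chi_1$ rather than small balls plus continuity of the Lyapunov exponent), and your explicit caveat about ergodicity matches the paper's implicit use of it.
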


\begin{proposition}\label{prop:main2}
  Suppose that $\Phi^{\ttt}$ satisfies the assumptions \ref{eq:cond1}--\ref{eq:cond5}. If $\mu \in \MM_\sigma(\Sigma)$ and there exists $\vvv_0\in V$ such that
  \begin{equation*} %\label{it:4}
	  \ldimh((\pi^1_{\vvv_0})_*\mu) > \frac{h(\mu)-\chi_1(\mu,(\vvv_0,\www))}{\chi_2(\mu,\vvv_0)}
  \end{equation*}
  for all $\www\in W$, then
	$$
	  H_{\vvv_0,\www}(\mu)=0
	$$
  for $\LL^m$-almost all $\www\in W$, where $H_\ttt(\mu)=-\int\log\mu_\jjj^{\pi_\ttt}([j_1])\dd\mu(\jjj)$.
\end{proposition}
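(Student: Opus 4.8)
The plan is to derive the vanishing of $H_{\vvv_0,\www}(\mu)$ from the transversality condition \ref{eq:cond5} by a Fubini argument, after rephrasing everything in terms of coincidences of the projections. Writing $\ttt=(\vvv_0,\www)$, I would first record the standard disintegration identity $H_\ttt(\mu)=\int\bigl(-\sum_{i=1}^N\mu_z([i])\log\mu_z([i])\bigr)\dd(\pi_\ttt)_*\mu(z)$, where $\mu_z$ is the conditional measure on $\pi_\ttt^{-1}(z)$; from it $0\le H_\ttt(\mu)\le\log N$, and $H_\ttt(\mu)=0$ if and only if $(\mu\times\mu)(E_\ttt)=0$, where $E_\ttt=\{(\jjj,\kkk)\in\Sigma\times\Sigma:j_1\ne k_1,\ \pi_\ttt(\jjj)=\pi_\ttt(\kkk)\}$. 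Since the first coordinate of $\pi_{(\vvv_0,\www)}$ is $\pi^1_{\vvv_0}$ and does not depend on $\www$, one has $E_{(\vvv_0,\www)}\subseteq E^1:=\{(\jjj,\kkk):j_1\ne k_1,\ \pi^1_{\vvv_0}(\jjj)=\pi^1_{\vvv_0}(\kkk)\}$, and $E^1$ is a fixed ($\www$-independent) set. Thus it suffices to prove $(\mu\times\mu)(E_{(\vvv_0,\www)})=0$ for $\LL^m$-almost every $\www\in W$.

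The key step is a leaf lemma. Fix an arbitrary $\lll\in\Sigma$. For each $\www$, the strong-stable foliation $\xi_\lll$ of $\Phi^{(\vvv_0,\www)}$ partitions $[0,1]\times\R$ into the $C^1$ graphs $x\mapsto y(\lll,(x_0,y_0),x)$, $x\in[0,1]$: exactly as in Lemma~\ref{lem:invariance}, the bounds \ref{eq:cond1} and \ref{eq:cond2} make $u(\lll,\cdot,\cdot)$ bounded and Lipschitz in $y$, so Picard--Lindel\"of gives uniqueness of solutions of \eqref{eq:diffeq} and hence disjointness of distinct leaves. Consequently, by \eqref{eq:nonlinproj}, two points of $[0,1]^2$ lie on the same leaf if and only if $\proj_\lll^{(\vvv_0,\www)}$ assigns them the same value; and if they additionally share the same abscissa, lying on a common graph over $[0,1]$ forces them to be equal. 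Applying this to $\pi_{(\vvv_0,\www)}(\jjj)$ and $\pi_{(\vvv_0,\www)}(\kkk)$ for a pair $(\jjj,\kkk)\in E^1$ gives, for every $\www\in W$,
\[
  \pi_{(\vvv_0,\www)}(\jjj)=\pi_{(\vvv_0,\www)}(\kkk)
  \quad\Longleftrightarrow\quad
  \proj_\lll^{(\vvv_0,\www)}(\jjj)=\proj_\lll^{(\vvv_0,\www)}(\kkk),
\]
so that $\mathbf{1}_{E_{(\vvv_0,\www)}}(\jjj,\kkk)=\mathbf{1}_{E^1}(\jjj,\kkk)\,\mathbf{1}[\proj_\lll^{(\vvv_0,\www)}(\jjj)=\proj_\lll^{(\vvv_0,\www)}(\kkk)]$.

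I would then combine this with \ref{eq:cond5}. For a fixed $(\jjj,\kkk)\in E^1$ we have $j_1\ne k_1$, so \ref{eq:cond5} yields $\LL^m(\{\www\in W:|\proj_\lll^{(\vvv_0,\www)}(\jjj)-\proj_\lll^{(\vvv_0,\www)}(\kkk)|<r\})\le C(\vvv_0)r$ for all $r>0$; since $W$ is bounded and $\www\mapsto\proj_\lll^{(\vvv_0,\www)}(\jjj)$ is measurable (indeed continuous, by \ref{eq:cond1}, \ref{eq:cond3}, and continuous dependence of the solutions of \eqref{eq:diffeq} on parameters), letting $r\downarrow0$ gives $\LL^m(\{\www:\proj_\lll^{(\vvv_0,\www)}(\jjj)=\proj_\lll^{(\vvv_0,\www)}(\kkk)\})=0$; the same holds trivially when $(\jjj,\kkk)\notin E^1$. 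After checking that $(\jjj,\kkk,\www)\mapsto\mathbf{1}_{E_{(\vvv_0,\www)}}(\jjj,\kkk)$ is jointly measurable (it is, since $(\jjj,\www)\mapsto\pi_{(\vvv_0,\www)}(\jjj)$ and $(\jjj,\www)\mapsto\proj_\lll^{(\vvv_0,\www)}(\jjj)$ are pointwise limits of maps that are locally constant in $\jjj$ and continuous in $\www$), Tonelli's theorem gives $\int_W(\mu\times\mu)(E_{(\vvv_0,\www)})\dd\www=\int_{\Sigma\times\Sigma}\LL^m(\{\www:(\jjj,\kkk)\in E_{(\vvv_0,\www)}\})\dd\mu(\jjj)\dd\mu(\kkk)=0$, whence $(\mu\times\mu)(E_{(\vvv_0,\www)})=0$, and therefore $H_{\vvv_0,\www}(\mu)=0$, for $\LL^m$-almost every $\www\in W$.

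The hard part will be the leaf lemma, and specifically making fully rigorous that the curves $\xi_\lll$ form an honest partition of $[0,1]\times\R$ — this rests on uniqueness for the initial value problem \eqref{eq:diffeq}, via the uniform $C^2$-bounds in \ref{eq:cond1} and the domination in \ref{eq:cond2} — so that the coincidence of $\proj_\lll^{(\vvv_0,\www)}$-values encodes exactly the coincidence of the projections $\pi_{(\vvv_0,\www)}$ on each fiber of $\pi^1_{\vvv_0}$; the remaining points (joint measurability for the Tonelli step and the passage $r\downarrow0$ in \ref{eq:cond5}) are routine. I note that the argument outlined above uses only \ref{eq:cond1}--\ref{eq:cond5}; the hypothesis on $\ldimh((\pi^1_{\vvv_0})_*\mu)$ is the form in which this proposition is invoked in the proof of Theorem~\ref{thm:meta}, where it combines with the weak Ledrappier--Young formula and Proposition~\ref{prop:main1} to identify $\dim((\pi_\ttt)_*\mu)$ with the Lyapunov dimension.
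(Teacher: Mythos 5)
The reduction at the very start of your argument is where it breaks. You claim that $H_{\vvv_0,\www}(\mu)=0$ if and only if $(\mu\times\mu)(E_\ttt)=0$, where $E_\ttt=\{(\jjj,\kkk):j_1\ne k_1,\ \pi_\ttt(\jjj)=\pi_\ttt(\kkk)\}$. That equivalence is false: $(\mu\times\mu)(\{(\jjj,\kkk):\pi_\ttt(\jjj)=\pi_\ttt(\kkk)\})=\int\mu(\pi_\ttt^{-1}(\pi_\ttt(\jjj)))\dd\mu(\jjj)=\sum_{z}\bigl((\pi_\ttt)_*\mu(\{z\})\bigr)^2$, which vanishes as soon as $(\pi_\ttt)_*\mu$ is non-atomic, irrespective of what the conditional measures $\mu_\jjj^{\pi_\ttt}$ look like. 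The quantity $H_\ttt(\mu)$ is computed from the disintegration over the (typically $\mu$-null) fibers $\pi_\ttt^{-1}(z)$; the double integral that actually encodes it is $\int(\mu_z\times\mu_z)(\{(\jjj,\kkk):j_1\ne k_1\})\dd(\pi_\ttt)_*\mu(z)$, and this is not $(\mu\times\mu)(E_\ttt)$. For instance, if $\mu_z=\tfrac12\delta_{\jjj}+\tfrac12\delta_{\jjj'}$ with $j_1\ne j_1'$ for a.e.\ $z$, then $H_\ttt(\mu)=\log 2>0$ while $(\mu\times\mu)(E_\ttt)=0$. So your Fubini computation establishes a vacuous statement and says nothing about $H_{\vvv_0,\www}(\mu)$. (The leaf lemma itself — that on a fixed fiber of $\pi^1_{\vvv_0}$ equality of $\proj_\lll^{(\vvv_0,\www)}$-values forces equality of the points — is fine, but it is not the issue.)

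A tell-tale sign is your closing remark that the hypothesis $\ldimh((\pi^1_{\vvv_0})_*\mu)>\frac{h(\mu)-\chi_1(\mu,(\vvv_0,\www))}{\chi_2(\mu,\vvv_0)}$ is never used: the conclusion is false without it (when $h(\mu)$ is large relative to the Lyapunov exponents, the projection must merge first-level cylinders along fibers and $H_\ttt(\mu)>0$ is unavoidable). The paper's proof, following Bara\'nski--Gutman--\'Spiewak, works on the fibers of $\pi^1_{\vvv_0}$: setting $\alpha=h(\mu)-\chi_2(\mu)\ldimh(\pi^1)_*\mu$, the hypothesis gives $\chi_1(\mu,\ttt_0)>\alpha+3\varepsilon$, and the conditional measure $\mu_\hhh^{\pi^1}$ sits on a set $\Omega_\hhh$ of topological entropy at most $\alpha+\varepsilon$. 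One covers $\Omega_\hhh$ by cylinders $\kkk$ with $\sum_\kkk e^{-|\kkk|(\alpha+\varepsilon)}<\eta$, applies the transversality condition \ref{eq:cond5} once per cylinder at scale $e^{-|\kkk|(\chi_1(\mu,\ttt_0)-2\varepsilon)}$, and sums; the entropy gap makes the total parameter measure of bad $\www$ at most a constant times $\eta$. This quantitative covering, rather than a pointwise Fubini over pairs, is what is needed, because for a fixed $\iii$ the set of partners $\jjj$ with $\pi_\www(\jjj)=\pi_\www(\iii)$ is uncountable and moves with $\www$. Any repair of your approach must replace $\mu\times\mu$ by the fiberwise product measures, at which point a single-pair estimate no longer closes the argument and you are driven to exactly this entropy-counting scheme.
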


\begin{proposition}\label{prop:main3}
  Suppose that $\Phi^{\ttt}$ satisfies the assumptions \ref{eq:cond1}--\ref{eq:cond5}. If $\mu \in \MM_\sigma(\Sigma)$ and there exists $\vvv_0\in V$ such that
  \begin{equation*} %\label{it:5}
	  (\pi^1_{\vvv_0})_*\mu\ll\LL^1,
  \end{equation*}
  then
	$$
	  (\pi_{(\vvv_0,\www)}^2)_*\mu^{\pi^1_{\vvv_0}}_{\iii}\ll\LL^1
	$$
  for $\LL^m$-almost all $\www\in W$ and for $\mu$-almost all $\iii \in \Sigma$, where $\{\mu_{\iii}^{\pi_{\vvv_0}^1}\}_{\iii\in\Sigma}$ is the family of conditional measures supported on the partition element $(\pi_{\vvv_0}^1)^{-1}(\pi_{\vvv_0}^1(\iii))$.
\end{proposition}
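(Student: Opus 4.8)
The plan is to transfer the problem to the disintegration of the planar measure along the strong-stable foliation $\xi_\kkk$ and then run a transversality/potential-energy argument in the parameter $\www$. First I would fix $\vvv_0$ and recall that, since $(\pi^1_{\vvv_0})_*\mu\ll\LL^1$, the measure $m := (\pi_{\vvv_0})_*\mu$ on $[0,1]^2$ disintegrates over its first coordinate, and the conditional measures are precisely the pushforwards $(\pi^2_{(\vvv_0,\www)})_*\mu^{\pi^1_{\vvv_0}}_\iii$ up to the identification of fibers. So the assertion is equivalent to: for $\LL^m$-a.e.\ $\www$, the conditional measures of $m_\www := (\pi_{(\vvv_0,\www)})_*\mu$ along vertical lines are absolutely continuous for $m_\www$-a.e.\ point. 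The natural way to detect this is via a finite-energy / $L^2$-density estimate along fibers, which I will obtain by integrating over $\www\in W$.

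The main step is the following Fubini-type energy bound. For $r>0$ and $\iii\in\Sigma$, write $Y^r_\iii(\www)$ for the $r$-neighbourhood of the vertical fiber through $\pi_{(\vvv_0,\www)}(\iii)$ intersected with the fiber of $\pi^1_{\vvv_0}$; then I want
$$
  \int_W \int_\Sigma \int_\Sigma \frac{\mathbf{1}\{|\pi^2_{(\vvv_0,\www)}(\iii)-\pi^2_{(\vvv_0,\www)}(\jjj)| < r,\ \pi^1_{\vvv_0}(\iii)=\pi^1_{\vvv_0}(\jjj)\}}{r}\dd\mu(\jjj)\dd\mu(\iii)\dd\www
$$
to be bounded uniformly in $r$. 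The reason the transversality condition \ref{eq:cond5} is applicable here is Lemma~\ref{lem:invariance}: the second coordinate $\pi^2_{(\vvv_0,\www)}(\iii)$ of a point, read along the leaf $\xi_\kkk$ through it, is exactly $y\bigl(\kkk,\pi_{(\vvv_0,\www)}(\iii),0\bigr)$ up to the flow along the foliation, i.e.\ it is comparable (using the bounded distortion \eqref{eq:bd} for $(g_\iii)'_y$ and the uniform $C^1$ control of the leaves coming from \ref{eq:cond1}, \ref{eq:cond2}) to $\proj^{\vvv_0,\www}_\kkk(\iii)$ for a suitable choice of $\kkk$ depending on $\iii\wedge\jjj$; when $i_1\ne j_1$ one takes $\kkk=\varnothing$, and for the general case one first passes to the cylinder $[\iii\wedge\jjj]$, applies the quasi-Bernoulli property of $\mu$ to factor $\mu([\iii\wedge\jjj])$ out, and uses \ref{eq:cond4} to absorb the $\www$-dependence of the Jacobian $(g^\ttt_{\iii\wedge\jjj})'_y$ into a subexponential error. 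Decomposing $\Sigma\times\Sigma$ according to $n=|\iii\wedge\jjj|$, applying \ref{eq:cond5} fiber by fiber, and summing the resulting geometric-type series $\sum_n \mu([\iii\wedge\jjj]) \cdot (\text{subexponential})$ — which converges because $\mu$ restricted to cylinders of length $n$ has total mass that, after the quasi-Bernoulli factorization, is summable against the transversality gain — gives the desired uniform bound. (Here one must be a little careful that the hypothesis $(\pi^1_{\vvv_0})_*\mu\ll\LL^1$ is genuinely used: it is what lets us write the double integral over pairs with $\pi^1_{\vvv_0}(\iii)=\pi^1_{\vvv_0}(\jjj)$ as an integral of the squared conditional density, rather than an ill-defined object.)

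From the uniform bound, Fatou's lemma gives that for $\LL^m$-a.e.\ $\www$ the inner double integral stays bounded as $r\downarrow 0$; a further disintegration over $\iii$ (using that $m_\www$ disintegrates over $\pi^1_{\vvv_0}$) shows that for such $\www$ and for $\mu$-a.e.\ $\iii$ the conditional measure $(\pi^2_{(\vvv_0,\www)})_*\mu^{\pi^1_{\vvv_0}}_\iii$ has finite $L^2$-energy against Lebesgue on the line, hence is absolutely continuous with an $L^2$ density, which in particular gives $(\pi^2_{(\vvv_0,\www)})_*\mu^{\pi^1_{\vvv_0}}_\iii\ll\LL^1$. I expect the main obstacle to be the bookkeeping in the second step: correctly relating $\pi^2_{(\vvv_0,\www)}(\iii)$ to $\proj^{\vvv_0,\www}_\kkk$ along the right leaf, verifying the comparison constants are uniform in $\www$ via \ref{eq:cond1}, \ref{eq:cond2}, \ref{eq:cond4}, and checking that the series over $n=|\iii\wedge\jjj|$ really converges — this is where the interplay between the quasi-Bernoulli constant, the bounded distortion, and the linear transversality gain must be balanced, and it is the analogue of the delicate summation in the proofs of Propositions~\ref{prop:main1} and \ref{prop:main2}.
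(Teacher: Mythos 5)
Your overall architecture --- Fubini over $\www$, Fatou, reduce to a transversality estimate, decompose by $n=|\iii\wedge\jjj|$, sum --- is the same as the paper's, and you correctly identify Lemma~\ref{lem:dist} as the tool that links $\pi^2_{(\vvv_0,\www)}$ to $\proj^{\vvv_0,\www}_\kkk$ along the strong-stable leaves. But there are two concrete gaps that would sink the write-up as sketched.

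First, the double integral you want to bound is taken against $\mu\times\mu$ restricted to the set $\{\pi^1_{\vvv_0}(\iii)=\pi^1_{\vvv_0}(\jjj)\}$, which is $\mu\times\mu$-null whenever $(\pi^1_{\vvv_0})_*\mu$ is atomless, so that expression is identically zero and carries no information. You sense this (``rather than an ill-defined object'') but do not repair it. The repair in the paper is to disintegrate first and run the whole argument against the self-products of conditional measures: fix $\hhh$, work with $\mu_\hhh^{\pi^1}\times\mu_\hhh^{\pi^1}|_{E_{\varepsilon,M}}$, bound the lower density
$\underline{D}\bigl((\pi^2_\www)_*\mu_\hhh^{\pi^1},x\bigr)$
and invoke the one-dimensional absolute-continuity criterion \cite[Theorem~2.12]{Mattila1995}; the Fubini/Fatou step then happens at this level, not on the $\mu\times\mu$-null diagonal.

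Second, and more seriously, your convergence argument for the sum over $n$ does not close. You propose to invoke the quasi-Bernoulli property of $\mu$ and sum $\sum_n \mu([\iii\wedge\jjj])\cdot(\text{subexponential})$. But Proposition~\ref{prop:main3} does not assume quasi-Bernoulli, and even if it did, $\sum_{\kkk\in\Sigma_n}\mu([\kkk])=1$ so the series $\sum_n\sum_{\kkk\in\Sigma_n}\mu([\kkk])\,e^{n(\chi_1+O(\varepsilon))}$ diverges. What actually makes the sum converge in the paper is the conditional-entropy decay $\mu_\hhh^{\pi^1}([\kkk])\lesssim e^{-n(h(\mu)-\chi_2(\mu)-\varepsilon)}$ for length-$n$ cylinders $\kkk$ met by the Egorov set, together with the implicit standing assumption $h(\mu)>\chi_1(\mu,\www_0)+\chi_2(\mu,\vvv_0)$ (with margin $3\varepsilon$), so that $e^{n(\chi_1+2\varepsilon)}e^{-n(h-\chi_2-\varepsilon)}$ is summable. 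This is exactly where the hypothesis $(\pi^1_{\vvv_0})_*\mu\ll\LL^1$ enters --- it pins the local entropy of the fiber measures at $h(\mu)-\chi_2(\mu)$ --- and it is quantitative, not the ``lets us write a squared conditional density'' role you assign it. Without supplying this entropy gain you have no mechanism for convergence, so the proof as proposed does not go through.

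A smaller point: the transversality condition \ref{eq:cond5} is stated for infinite words $\kkk\in\Sigma$, so ``$\kkk=\varnothing$'' is not a legal choice; the paper instead pushes forward $n$ steps along the cylinder $\iii\wedge\jjj$ (using the mean-value estimate for $(g_{\iii\wedge\jjj}^\ttt)'_y$ and \ref{eq:cond4} to absorb the $\www$-dependence, picking up the factor $e^{n(\chi_1+2\varepsilon)}$), reduces to $(\sigma^n\iii)_1\ne(\sigma^n\jjj)_1$, and then applies Lemma~\ref{lem:dist} and \ref{eq:cond5} with any fixed $\kkk\in\Sigma$.
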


Let us next demonstrate how Theorem~\ref{thm:meta} follows from Propositions~\ref{prop:main1}--\ref{prop:main3} and the weak Ledrappier-Young formula, Theorem~\ref{thm:LY}. To that end, we require the following lemma.

\begin{lemma}\label{lem:cont}
  Suppose that $\Phi^{\ttt}$ satisfies the assumptions \ref{eq:cond1}--\ref{eq:cond4} and \ref{eq:cond6}. If $\mu \in \MM_\sigma(\Sigma)$, then the maps $\ttt\mapsto s_0(\ttt)$, $\ttt\mapsto\chi_1(\mu,\ttt)$, and $\vvv\mapsto\chi_2(\mu,\vvv)$ are continuous. In particular, the map $\ttt\mapsto\diml(\mu,\ttt)$ is continuous.
\end{lemma}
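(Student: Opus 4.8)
The plan is to prove continuity of $\ttt\mapsto\chi_1(\mu,\ttt)$, $\vvv\mapsto\chi_2(\mu,\vvv)$ and $\ttt\mapsto s_0(\ttt)$ separately, in each case by estimating on a small ball $B(\ttt_0,\delta)$ the finite-level quantities that converge to them, the radius $\delta$ being produced by the uniform subexponential continuity conditions \ref{eq:cond4} and \ref{eq:cond6}, and then letting the level tend to infinity. The continuity of $\diml(\mu,\cdot)$ follows at once, since $h(\mu)$ does not depend on the parameter and $\chi_1,\chi_2$ are bounded below by a positive constant.

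First I would reduce the Lyapunov exponents to base-point-free averages. Fix a base point $(x_0,y_0)\in[0,1]^2$. Using the bounded distortion property \eqref{eq:bd}, the identity $\pi_\ttt(\sigma^k\iii)=F^{\ttt}_{i_{k+1}\cdots i_n}(\pi_\ttt(\sigma^n\iii))$ valid for $0\le k\le n$, and the $\sigma$-invariance of $\mu$, one checks that
\[
  \chi_1(\mu,\ttt)=-\lim_{n\to\infty}\frac1n\sum_{\iii\in\Sigma_n}\mu([\iii])\log|(g_\iii^{\ttt})'_y(x_0,y_0)|
\]
and, likewise, $\chi_2(\mu,\vvv)=-\lim_{n\to\infty}\frac1n\sum_{\iii\in\Sigma_n}\mu([\iii])\log|(f_\iii^{\vvv})'(x_0)|$. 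Now fix $\ttt_0=(\vvv_0,\www_0)$ and $\eps>0$. By \ref{eq:cond4} there is $\delta>0$ such that $\bigl|\log|(g_\iii^{\ttt})'_y(x_0,y_0)|-\log|(g_\iii^{\ttt_0})'_y(x_0,y_0)|\bigr|\le\eps|\iii|$ for all $\iii\in\Sigma_*$ and $\ttt\in B(\ttt_0,\delta)$; averaging this against $\mu$ over $\Sigma_n$, dividing by $n$, and passing to the limit gives $|\chi_1(\mu,\ttt)-\chi_1(\mu,\ttt_0)|\le\eps$. The same reasoning with \ref{eq:cond6} in place of \ref{eq:cond4} gives $|\chi_2(\mu,\vvv)-\chi_2(\mu,\vvv_0)|\le\eps$ for $\vvv\in B(\vvv_0,\delta)$. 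Hence both maps are continuous.

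For $s_0$, I would first recall that $P_\ttt(s)$ is independent of the base point, so $P_\ttt(s)=\lim_{n\to\infty}\frac1n\log\sum_{\iii\in\Sigma_n}\varphi^s(\iii,x_0,y_0)$. Combining \ref{eq:cond4} and \ref{eq:cond6} (with the smaller of the two radii) and inspecting the three cases of \eqref{eq:singvaluefunction}, one gets $e^{-s\eps|\iii|}\le\varphi^s_\ttt(\iii,x_0,y_0)/\varphi^s_{\ttt_0}(\iii,x_0,y_0)\le e^{s\eps|\iii|}$ on $B(\ttt_0,\delta)$ for every $\iii\in\Sigma_*$ and $s\ge0$, whence $|P_\ttt(s)-P_{\ttt_0}(s)|\le s\eps$. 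Thus $\ttt\mapsto P_\ttt(s)$ is continuous for each $s$, uniformly for $s$ in a bounded interval. Recall that $s\mapsto P_{\ttt_0}(s)$ is continuous and strictly decreasing with a unique root $s_0(\ttt_0)\in(0,\infty)$, because $P_{\ttt_0}(0)=\log N>0$. Given $\eta\in(0,s_0(\ttt_0))$, put $c=\min\{P_{\ttt_0}(s_0(\ttt_0)-\eta),-P_{\ttt_0}(s_0(\ttt_0)+\eta)\}>0$ and $M=s_0(\ttt_0)+\eta$, and choose $\eps=c/(2M)$ together with the corresponding $\delta$; then $P_\ttt(s_0(\ttt_0)-\eta)\ge c/2>0$ and $P_\ttt(s_0(\ttt_0)+\eta)\le-c/2<0$ for $\ttt\in B(\ttt_0,\delta)$, so by the intermediate value theorem and the strict monotonicity of $P_\ttt$ we get $|s_0(\ttt)-s_0(\ttt_0)|<\eta$. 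Hence $s_0$ is continuous.

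Finally, $h(\mu)$ is parameter-free, and \ref{eq:cond2} forces $\chi_1(\mu,\ttt)\ge-\log\rho>0$ and $\chi_2(\mu,\vvv)\ge-\log\rho>0$ for all $\ttt=(\vvv,\www)$, since every factor in the products defining $(g_\iii^\ttt)'_y$ and $(f_\iii^\vvv)'$ is at most $\rho<1$. Therefore the three entries of $\diml(\mu,\ttt)=\min\{2,h(\mu)/\chi_1(\mu,\ttt),1+(h(\mu)-\chi_1(\mu,\ttt))/\chi_2(\mu,\vvv)\}$ are continuous functions of $\ttt$, and a minimum of finitely many continuous functions is continuous. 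The argument is essentially bookkeeping; the only steps needing slight care are the identification of the base-point-free averages with the Lyapunov exponents — this is where \eqref{eq:bd} and the cocycle identity enter — and the routine fact that the root of a strictly monotone, continuous, uniformly convergent family of functions varies continuously with the parameter. I expect the former to be the least automatic, but neither presents a genuine obstacle.
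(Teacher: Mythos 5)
Your proposal is correct, and the $s_0$ part is essentially the paper's argument: both derive $|P_{\ttt}(s)-P_{\ttt_0}(s)|\le s\varepsilon$ from \ref{eq:cond4} and \ref{eq:cond6}; you finish with an intermediate-value sandwich around $s_0(\ttt_0)\mp\eta$, whereas the paper finishes by noting that \ref{eq:cond2} gives the uniform Lipschitz lower bound $|P_\ttt(s)-P_\ttt(s')|\ge(-\log\rho)|s-s'|$ and reading off $|s_0(\ttt)-s_0(\ttt_0)|$ directly. Where you genuinely diverge is the continuity of the Lyapunov exponents. The paper proves that $\ttt\mapsto\pi_\ttt(\iii)$ is continuous (via the continuity of $\vvv\mapsto\pi^1_\vvv(\iii)$ from \cite{SSU01} and an explicit geometric-series estimate for the second coordinate, using \ref{eq:cond1}--\ref{eq:cond3}) and then applies dominated convergence to the integrals defining $\chi_1$ and $\chi_2$. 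You instead rewrite each exponent as $-\lim_n\frac1n\sum_{\iii\in\Sigma_n}\mu([\iii])\log|(g^\ttt_\iii)'_y(x_0,y_0)|$ via the cocycle identity, $\sigma$-invariance, and bounded distortion \eqref{eq:bd}, and then feed \ref{eq:cond4} and \ref{eq:cond6} in directly; this bypasses the continuity of the canonical projection altogether and makes the exponent argument structurally identical to the pressure argument, at the price of having to justify the base-point-free reformulation (which you do correctly — note only that ergodicity is not needed there, invariance suffices, consistent with the lemma's hypothesis $\mu\in\MM_\sigma(\Sigma)$). Your closing observation that $\chi_1,\chi_2\ge-\log\rho>0$, so that $\diml(\mu,\cdot)$ is a minimum of three continuous functions, is the same (implicit) step as in the paper.
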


\begin{proof}
	Although the proof is straightforward, we present it for completeness. Fix $\ttt_0=(\vvv_0,\www_0)\in V\times W$. Then, by \ref{eq:cond4} and \ref{eq:cond6}, for every $\varepsilon>0$ there exists $\delta>0$ such that
	$$
	e^{-sn\varepsilon}\leq\frac{\sum_{\iii\in\Sigma_n}\varphi^s_{\ttt}(\iii,x,y)}{\sum_{\iii\in\Sigma_n}\varphi^s_{\ttt_0}(\iii,x,y)}\leq e^{sn\varepsilon}
	$$
  for all $\ttt\in B(\ttt_0,\delta)$ and $s\in[0,\infty)$, where $\varphi^s_{\ttt}(\iii,x,y)$ is defined in \eqref{eq:singvaluefunction}. It follows that $|P_{\ttt}(s)-P_{\ttt_0}(s)| < s\varepsilon$. On the other hand, \ref{eq:cond2} implies that $|P_{\ttt}(s)-P_{\ttt}(s')| \ge L|s-s'|$ for every $\ttt\in V\times W$, where $L=-\log\rho$. Therefore,
	\begin{align*}
		0&=|P_{\ttt}(s_0(\ttt))-P_{\ttt_0}(s_0(\ttt_0))|\\
		&\geq|P_{\ttt}(s_0(\ttt))-P_{\ttt}(s_0(\ttt_0))|-|P_{\ttt}(s_0(\ttt_0))-P_{\ttt_0}(s_0(\ttt_0))|\\
		&\geq L|s_0(\ttt)-s(\ttt_0)|-s_0(\ttt_0)\varepsilon,
	\end{align*}
	which proves the continuity of $\ttt\mapsto s_0(\ttt)$.
	
	The continuity of the Lyapunov exponents follows by the continuity of the map $\ttt\mapsto\pi_{\ttt}(\iii)$ and the dominated convergence theorem, which can be applied by \ref{eq:cond2}. The continuity of $\ttt\mapsto\pi_{\ttt}(\iii)$ follows from \ref{eq:cond1}--\ref{eq:cond3}. Indeed, notice first that $\vvv\mapsto\pi_{\vvv}^1(\iii)$ is continuous by \cite[proof of Lemma~4.1]{SSU01} and
	\begin{align*}
		|\pi_{\ttt}^2(\iii)-\pi_{\ttt_0}^2(\iii)|&=|g_{i_1}^{\www}(\pi_{\ttt}(\sigma\iii))-g_{i_1}^{\www_0}(\pi_{\ttt_0}(\sigma\iii))|\\
		&=|g_{i_1}^{\www}(\pi_{\ttt}(\sigma\iii))-g_{i_1}^{\www}(\pi_{\ttt_0}(\sigma\iii))+g_{i_1}^{\www}(\pi_{\ttt_0}(\sigma\iii))-g_{i_1}^{\www_0}(\pi_{\ttt_0}(\sigma\iii))|\\
		&\leq C|\pi_{\vvv}^1(\sigma\iii)-\pi_{\vvv_0}^1(\sigma\iii)|+|g_{i_1}^{\www}(\pi_{\ttt_0}(\sigma\iii))-g_{i_1}^{\www_0}(\pi_{\ttt_0}(\sigma\iii))|\\
		&\qquad\qquad+\rho|\pi_{\ttt}^2(\sigma\iii)-\pi_{\ttt_0}^2(\sigma\iii)|.
	\end{align*}
	Hence, by the induction,
	$$
	|\pi_{\ttt}^2(\iii)-\pi_{\ttt_0}^2(\iii)|\leq\sum_{k=1}^\infty(C|\pi_{\vvv}^1(\sigma^k\iii)-\pi_{\vvv_0}^1(\sigma^k\iii)|+|g_{i_k}^{\www}(\pi_{\ttt_0}(\sigma^k\iii))-g_{i_k}^{\www_0}(\pi_{\ttt_0}(\sigma^k\iii))|)\rho^{k-1},
	$$
	which can be rendered arbitrary small by the continuity of $\vvv\mapsto\pi_{\vvv}^1(\iii)$ and \ref{eq:cond2}.
\end{proof}

Observe that for every $\ttt_0\in V\times W$, there exists a unique quasi-Bernoulli $\mu_{\ttt_0} \in \MM_\sigma(\Sigma)$ such that
\begin{equation}\label{eq:equimeasure}
  s_0(\ttt_0)=\diml(\mu_{\ttt_0},\ttt_0).
\end{equation}
This follows by considering a H\"older-continuous potential $\iii\mapsto\log\varphi^{s(\ttt_0)}(i_1,\pi_{\ttt_0}(\sigma\iii))$ and invoking \cite[Theorem~1.22]{Bowen}. We are now ready to prove the main theorem.

\begin{proof}[Proof of Theorem~\ref{thm:meta}]
	Let us first prove the claims for the quasi-Bernoulli measure. By \cite[Theorem 7.2]{SSU01}, assuming \ref{eq:cond1}--\ref{eq:cond4} and \ref{eq:cond7}, there exists $\tilde{V}\subseteq V$ such that $\LL^d(V\setminus\tilde{V})=0$ and, in particular, $\ldimh((\pi^1_{\vvv})_*\mu)=\min\{1,\frac{h(\mu)}{\chi_2(\mu,\vvv)}\}$ for all $\vvv\in\tilde{V}$ and $(\pi^1_{\vvv})_*\mu\ll\mathcal{L}$ for all $\vvv\in \tilde{V}$ for which $h(\mu)>\chi_2(\mu,\vvv)$. By \eqref{eq:upperboundformeasure} and Fubini's Theorem, it is enough to show that, for each $\vvv_0\in\tilde{V}$, we have
	\begin{equation}\label{eq:en1}
	  \ldimh((\pi_{\vvv_0,\www})_*\mu)\ge\diml(\mu,(\vvv_0,\www))
	\end{equation}
  for $\LL^m$-almost every $\www\in W$. Fix $\vvv_0\in\tilde{V}$. It is easy to see that on the region of $W$, where $\diml(\mu,(\vvv_0,\www))\leq1$, we have
	$$
	\diml(\mu,(\vvv_0,\www))=\min\biggl\{1,\frac{h(\mu)}{\chi_1(\mu,(\vvv_0,\www))}\biggr\}.
	$$
	Hence, the claim \eqref{eq:en1} follows from Proposition~\ref{prop:main1} and the fact that $\ldimh(\proj_\iii^{\vvv_0,\www})_*\mu \leq \ldimh(\pi_{\vvv_0,\www})_*\mu$. On the region of $W$, where $1\leq\diml(\mu,(\vvv_0,\www))<2$, simple algebraic manipulations show that $\min\{1,\frac{h(\mu)}{\chi_2(\mu,\vvv_0)}\}>\frac{h(\mu)-\chi_1(\mu,(\vvv_0,\www))}{\chi_2(\mu,\vvv_0)}$ and hence, \eqref{eq:en1} follows by Theorem~\ref{thm:LY}, Proposition~\ref{prop:main1}, and Proposition~\ref{prop:main2}. Finally, on the region of $W$, where $\diml(\mu,(\vvv_0,\www))>2$, we have
  $$
    h(\mu)>\chi_1(\mu,(\vvv_0,\www))+\chi_2(\mu,\vvv_0)>\chi_2(\mu,\vvv_0)
  $$
  and so, by Proposition~\ref{prop:main3}, we see that $(\pi_{\vvv_0,\www}^2)_*\mu^{\pi^1_{\vvv_0,\www}}_{\iii}\ll\LL^1$ for $\LL^m$-almost all $\www\in W$. Since
	\begin{align*}
    (\pi_{\vvv_0,\www})_*\mu&=\int(\pi_{\vvv_0,\www})_*\mu^{\pi^1_{\vvv_0}}_{\iii}\dd\mu(\iii)\\
    &=\int\delta_{\pi^1_{\vvv_0}(\iii)}\times(\pi_{\vvv_0,\www}^2)_*\mu^{\pi^1_{\vvv_0}}_{\iii}\dd\mu(\iii)=\int\delta_{\iii}\times(\pi_{\vvv_0,\www}^2)_*\mu^{\pi^1_{\vvv_0}}_{\iii}\dd(\pi^1_{\vvv_0})_*\mu(\iii),
	\end{align*}
	we get, by Fubini's Theorem, that $(\pi_{\vvv_0,\www})_*\mu\ll\LL^2$ for $\LL^m$-almost all $\www\in W$. Applying Fubini's Theorem once more, we conclude that $(\pi_\ttt)_*\mu\ll\LL^2$ for $\LL^{d+m}$-almost all $\ttt\in V\times W$.
	
  Let us then prove the claims for the attractor. They basically follow by the continuity properties of Lemma~\ref{lem:cont} for the measure $\mu_{\ttt_0}$ defined in \eqref{eq:equimeasure}. Let us first assume that $s_0(\ttt) \le 2$ for all $\ttt \in V \times W$. Recalling \eqref{eq:upperboundforset}, let us argue by contradiction and suppose that $\LL^{d+m}(\{\ttt\in V\times W:\dimh(X_{\ttt})<s_0(\ttt)\})>0$. Then there exists $n \in \N$ such that $\LL^{d+m}(\{\ttt\in V\times W:\dimh(X_{\ttt})<s_0(\ttt)-1/n\})>0$. Let $\ttt_0$ be a Lebesgue density point of that set. Hence, for $\LL^{d+m}$-almost every $\ttt\in\{\ttt\in V\times W:\dimh(X_{\ttt})<s_0(\ttt)-1/n\}$, we have
	$$
	s_0(\ttt)-1/n>\dimh(X_\ttt)\geq\dim((\pi_\ttt)_*\mu_{\ttt_0})=\diml(\mu_{\ttt_0},\ttt).
	$$
	But by Lemma~\ref{lem:cont}, $\diml(\mu_{\ttt_0},\ttt)\to s(\ttt_0)$ as $\ttt\to\ttt_0$, which contradicts to the continuity of $\ttt\mapsto s_0(\ttt)$ at $\ttt_0$. Recalling \eqref{eq:upperboundforset}, this shows that $\dimh(X_\ttt) = \udimm(X_\ttt) = s_0(\ttt)$ for $\LL^{d+m}$-almost all $\ttt \in V \times W$ provided that $s_0(\ttt) \le 2$ for all $\ttt \in V \times W$. If $s(\ttt_0)>2$ then again by Lemma~\ref{lem:cont}, we have $\diml(\mu_{\ttt_0},\ttt)>2$ in a sufficiently small neighbourhood of $\ttt_0$ and therefore, $(\pi_{\ttt})_*\mu_{\ttt_0}\ll\LL^2$ and $\LL^2(X_\ttt)>0$ for $\LL^{d+m}$-almost every $\ttt$ in this neighborhood completing the proof.
\end{proof}

\section{Proofs of the propositions} \label{sec:proof-props}

In this section, we prove Propositions \ref{prop:main1}--\ref{prop:main3} and hence, finish the proof of Theorem \ref{thm:meta}. Let $\Phi^\ttt$ be a parametrized planar iterated function system as described in Section \ref{sec:main}. Before going into the proofs, we study the non-linear projection \eqref{eq:nonlinproj} in more detail. We denote the strong-stable tangent bundle \eqref{eq:defofbundle} by $u_\ttt$ and the strong-stable foliation \eqref{eq:diffeq} by $y_\ttt$. It is easy to see that

\begin{equation}\label{eq:uposy}
\begin{split}
(u_\ttt)'_y(\iii,x,y)&=\sum_{k=1}^\infty\Biggl(\frac{-(g_{i_k}^\www)_{xy}''(F_{\overleftarrow{\iii|_{k-1}}}^\ttt(x,y))(f_{\overleftarrow{\iii|_{k-1}}}^\vvv)'(x)}{(g_{\overleftarrow{\iii|_{k}}}^\ttt)_y'(x,y)}(g_{\overleftarrow{\iii|_{k-1}}}^\ttt)_y'(x,y) \\
&\qquad\qquad+\sum_{\ell=1}^k\frac{(g_{i_k}^\www)_x'(F_{\overleftarrow{\iii|_{k-1}}}^\ttt(x,y))(f_{\overleftarrow{\iii|_{k-1}}}^\vvv)'(x)}{(g_{\overleftarrow{\iii|_{k}}}^\ttt)_y'(x,y)}\\ 
&\qquad\qquad\qquad\qquad\cdot\frac{(g_{i_\ell}^\www)_{yy}''(F_{\overleftarrow{\iii|_{\ell-1}}}^\ttt(x,y))}{(g_{i_\ell}^\www)_{y}'(F_{\overleftarrow{\iii|_{\ell-1}}}^\ttt(x,y))}(g_{\overleftarrow{\iii|_{\ell-1}}}^\ttt)_y'(x,y)\Biggr).
\end{split}
\end{equation}
By \ref{eq:cond1} and \ref{eq:cond2}, there exists a constant $C>0$ such that
\begin{equation}\label{eq:ubounded}
|(u_\ttt)'_y(\iii,x,y)|\leq C
\end{equation}
for all $(x,y)\in[0,1]^2$, $\iii\in\Sigma$, and $\ttt\in V\times W$.

\begin{lemma}\label{lem:dist}
	There exists $C>0$ such that
	$$
	|y_\ttt(\iii,(x_0,y_0),x)-y_\ttt(\iii,(x_1,y_1),x)|\leq C|y_\ttt(\iii,(x_0,y_0),0)-y_\ttt(\iii,(x_1,y_1),0)|
	$$
  for all $\iii\in\Sigma$, $(x_0,y_0),(x_1,y_1)\in[0,1]^2$, $x\in[0,1]$, and $\ttt\in V\times W$.
\end{lemma}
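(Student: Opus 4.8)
The plan is to view the two maps $x\mapsto y_\ttt(\iii,(x_0,y_0),x)$ and $x\mapsto y_\ttt(\iii,(x_1,y_1),x)$ as solutions of one and the same non-autonomous ordinary differential equation $h'(x)=u_\ttt(\iii,x,h(x))$ coming from \eqref{eq:diffeq}, and to invoke the standard Gr\"onwall estimate for continuous dependence on initial conditions. The Lipschitz constant of the right-hand side in the $y$-variable is exactly the quantity bounded in \eqref{eq:ubounded}, and that bound is uniform in $\iii$ and $\ttt$; this is precisely the uniformity the lemma demands, so obtaining a single constant requires no extra argument.

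Concretely, I would fix $\iii\in\Sigma$, $\ttt\in V\times W$, and the two initial points, write $h_k(x)=y_\ttt(\iii,(x_k,y_k),x)$ for $k\in\{0,1\}$ and $x\in[0,1]$, and set $\delta(x)=h_0(x)-h_1(x)$. By \eqref{eq:diffeq} both $h_0$ and $h_1$ solve $h'(x)=u_\ttt(\iii,x,h(x))$, so $\delta'(x)=u_\ttt(\iii,x,h_0(x))-u_\ttt(\iii,x,h_1(x))$, and the mean value theorem together with \eqref{eq:ubounded} gives $|\delta'(x)|\le C_0|\delta(x)|$, where $C_0$ is the constant of \eqref{eq:ubounded}. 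Hence $\frac{\mathrm{d}}{\mathrm{d}x}|\delta(x)|\le C_0|\delta(x)|$ for almost every $x\in[0,1]$, and Gr\"onwall's inequality gives $|\delta(x)|\le e^{C_0|x|}|\delta(0)|\le e^{C_0}|\delta(0)|$ because $x\in[0,1]$; this is the assertion with $C=e^{C_0}$. (Equivalently, one may solve the variational equation and note that $\partial_{y_0}h_0(x)/\partial_{y_0}h_0(0)=\exp\int_0^x(u_\ttt)'_y(\iii,s,h_0(s))\dd s\in[e^{-C_0},e^{C_0}]$, but taking the difference of the two solutions directly seems cleanest.)

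The one point that deserves a sentence of justification --- and the only place where a careful reader might hesitate --- is that the solutions $h_k$ need not stay inside $[0,1]^2$, whereas \eqref{eq:uposy} and \eqref{eq:ubounded} were recorded on $[0,1]^2$. This is harmless: $|u_\ttt|$ is bounded on $[0,1]^2$, say by $C'$, by \ref{eq:cond1} and \ref{eq:cond2}, so every strong-stable curve issued from a point of $[0,1]^2$ remains in the fixed compact strip $[0,1]\times[-C',1+C']$, and by the $C^2$-regularity of $f_i^\vvv$ and $g_i^\www$ (extended slightly beyond $[0,1]^2$ if necessary, keeping the bounds of \ref{eq:cond1}) the estimate $|(u_\ttt)'_y|\le C_0$ persists on that strip. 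With this in hand the Gr\"onwall step applies verbatim, so I do not anticipate a genuine obstacle; the lemma is in essence a repackaging of smooth dependence on initial data for the foliation ODE \eqref{eq:diffeq}.
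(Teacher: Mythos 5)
Your proposal is correct and follows essentially the same route as the paper: both solutions solve the same ODE $h'=u_\ttt(\iii,x,h)$, the mean value theorem together with the uniform bound \eqref{eq:ubounded} on $(u_\ttt)'_y$ gives $|\delta'|\le C_0|\delta|$, and Gr\"onwall's inequality on $[0,1]$ yields the claim with $C=e^{C_0}$. The only cosmetic difference is that the paper disposes of the absolute value by first invoking uniqueness of solutions to conclude the two curves either coincide or never cross (so the difference has constant sign), whereas you work with $|\delta|$ directly; both are fine.
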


\begin{proof}
	By the uniqueness of the solution of the differential equation \eqref{eq:diffeq}, either
  \begin{equation*}
    y_\ttt(\iii,(x_0,y_0),x)\equiv y_\ttt(\iii,(x_1,y_1),x)
  \end{equation*}
  or
  \begin{equation*}
    y_\ttt(\iii,(x_0,y_0),x)\neq y_\ttt(\iii,(x_1,y_1),x)
  \end{equation*}
  for all $x\in[0,1]$. In the first case, the claim of the lemma is trivial. In the second case, without loss of generality, we may assume that
	$$
    y_\ttt(\iii,(x_0,y_0),x)> y_\ttt(\iii,(x_1,y_1),x)
	$$
  for all $x\in[0,1]$. By Lagrange's Mean Value Theorem, for each $x\in[0,1]$ there exists $\xi$ (which might also depend on $\iii$, $\ttt$, $(x_0,y_0)$, and $(x_1,y_1)$) such that
	\begin{align*}
		y_\ttt'(\iii,(x_0,y_0),x)-y_\ttt'(\iii,(x_1,y_1),x)&=u_\ttt(\iii,x,y_\ttt(\iii,(x_0,y_0),x))-u_\ttt(\iii,x,y_\ttt(\iii,(x_1,y_1),x))\\
		&=(u_\ttt)'_y(\iii,x,\xi)\cdot(y_\ttt(\iii,(x_0,y_0),x)-y_\ttt(\iii,(x_1,y_1),x))\\
		&\leq C(y_\ttt(\iii,(x_0,y_0),x)-y_\ttt(\iii,(x_1,y_1),x)),
	\end{align*}
	where in the last inequality we applied \eqref{eq:ubounded}. Hence, by Gr\"onwall's inequality (see, for example, \cite[Theorem~1.2.1]{Diffineq}), we have
	$$
    y_\ttt(\iii,(x_0,y_0),x)-y_\ttt(\iii,(x_1,y_1),x)\leq(y_\ttt(\iii,(x_0,y_0),0)-y_\ttt(\iii,(x_1,y_1),0))e^{C x}.
	$$
  The claim follows as $x\in[0,1]$.
\end{proof}

Since the first parameter-coordinate $\vvv_0\in V$ will be fixed throughout the proofs, with a slight abuse of notation, we denote $g_{\iii}^{\vvv_0,\www}$ by $g_{\iii}^{\www}$ and $f_\iii^{\vvv_0}$ by $f_\iii$ for all $\iii\in\Sigma_*$. Similarly, the canonical projection $\pi_{\vvv_0,\www}$ is denoted by $\pi_{\www} = (\pi^1,\pi^2_\www)$ and the non-linear projection $\proj_{\iii}^{\vvv_0,\www}$ by $\proj_{\iii}^{\www}$ for all $\iii\in\Sigma$.

\begin{proof}[Proof of Proposition~\ref{prop:main1}]
	Fix $\vvv_0\in V$. Standard calculations show that for every $\varepsilon>0$ there exists $\delta>0$ such that, for every $\www\in B(\www_0,\delta)$, $\iii\in\Sigma$, and $n\geq1$, we have
	\begin{equation}\label{eq:boundedparam}
	e^{-n\varepsilon}\leq\frac{(g_{\iii|_n}^{\www_0})'_y(\pi_{\www_0}(\sigma^n\iii))}{(g_{\iii|_n}^{\www})'_y(\pi_{\www}(\sigma^n\iii))}\leq e^{n\varepsilon},
	\end{equation}
	by the condition \ref{eq:cond4}. We will show that for every $\varepsilon>0$ there exists a $\delta>0$ such that, for any $\www_0\in W$ and $\mathcal{L}^m\times\overleftarrow{\mu}$-almost every $(\www,\iii)\in B(\www_0,\delta)\times\Sigma$, we have
	\begin{equation}\label{eq:en2}
	\ldimh(\proj_\iii^{\www})_*\mu\geq\min\biggl\{1-\varepsilon,\frac{h(\mu)-\varepsilon}{\chi_1(\mu,(\vvv_0,\www_0))+2\varepsilon}\biggr\}.
	\end{equation}	
	From this, using the continuity of the Lyapunov exponent given by Lemma~\ref{lem:cont}, one can show, by a standard density argument, that 
	$$
	\ldimh(\proj_\iii^{\www})_*\mu\ge\min\biggl\{1,\frac{h(\mu)}{\chi_1(\mu,(\vvv_0,\www))}\biggr\}
	$$
  for $\mathcal{L}^m\times\overleftarrow{\mu}$-almost all $(\www,\iii)$. For details, see Simon, Solomyak, and Urba\'nski \cite[proof of Theorem~2.3(i)]{SSU01}. 
	
	We define $E=\bigcap_{\varepsilon>0}\bigcup_{M=1}^\infty E_{M,\varepsilon}$, where
	\begin{equation}\label{eq:egorov}
	\begin{split}
		E_{M,\varepsilon}=\{\iii\in\Sigma:\;&e^{-n(\chi_1(\mu,\ttt_0)+\varepsilon)}\leq(g_{\iii|_n}^{\www_0})'_y(\pi_{\www_0}(\sigma^n\iii))\leq e^{-n(\chi_1(\mu,\ttt_0)-\varepsilon)}\text{ and }\\
		&e^{-n(h(\mu)+\varepsilon)}\leq\mu([\iii|_n])\leq e^{-n(h(\mu)-\varepsilon)}\text{ for all }n\geq M\}.
	\end{split}
	\end{equation}
	Notice that, by Birkhoff's Ergodic Theorem and Shannon-McMillan-Breiman Theorem, we have $\mu(E)=1$. Fix $\www_0\in W$ and let
  \begin{equation*}
    s<\min\biggl\{1-\varepsilon,\frac{h(\mu)-\varepsilon}{\chi_1(\mu,\vvv_0,\www_0)+2\varepsilon}\biggr\}.
  \end{equation*}
  Choose $M\geq1$ such that $\mu(E_{M,\varepsilon})>1-\varepsilon$. Let $A_n=\{(\iii,\jjj)\in \Sigma\times E_{\varepsilon,M}:|\iii\wedge\jjj|=n\}$ and observe that
	\begin{align*}
		\int_{B(\www_0,\delta)}&\iiint_{\Sigma\times E_{\varepsilon,M}}\frac{\dd\mu(\jjj)\dd\mu(\hhh)\dd\overleftarrow{\mu}(\iii)\dd\mathcal{L}^m(\www)}{|\proj_{\iii}^\www(\jjj)-\proj_{\iii}^\www(\hhh)|^s}\\
		&=\sum_{n=0}^\infty\int_{B(\www_0,\delta)}\iiint_{A_n}\frac{\dd\mu(\jjj)d\mu(\hhh)\dd\overleftarrow{\mu}(\iii)\dd\mathcal{L}^m(\www)}{|\proj_{\iii}^\www(\jjj)-\proj_{\iii}^\www(\hhh)|^s}\\
		&=\sum_{n=0}^\infty\iiint_{A_n}\int_{B(\www_0,\delta)}\frac{\dd\mathcal{L}^m(\www)\dd\mu(\jjj)\dd\mu(\hhh)\dd\overleftarrow{\mu}(\iii)}{|\proj_{\iii}^\www(\jjj)-\proj_{\iii}^\www(\hhh)|^s}\\
		&\leq\sum_{n=0}^\infty\iiint_{A_n}\int_0^\infty\mathcal{L}^m(\{\www\in B(\www_0,\delta):|\proj_{\iii}^\www(\jjj) \\ 
    &\qquad\qquad\qquad-\proj_{\iii}^\www(\hhh)|<r^{-1/s}\})\dd r\dd\mu(\jjj)\dd\mu(\hhh)\dd\overleftarrow{\mu}(\iii).
	\end{align*}
By Lemma~\ref{lem:invariance} and Lemma~\ref{lem:dist}, we have
\begin{align*}
	|\proj_{\iii}^\www(\jjj)-\proj_{\iii}^\www(\hhh)|&\geq C^{-1}|y_\www(\iii,\pi_\www(\jjj),f_{\jjj\wedge\hhh}(0))-y_\www(\iii,\pi_\www(\hhh),f_{\jjj\wedge\hhh}(0))|\\
	&=C^{-1}|g_{\jjj\wedge\hhh}^{\www}(0,\proj_{\overleftarrow{\jjj\wedge\hhh}\iii}^{\www}(\sigma^n\jjj))-g_{\jjj\wedge\hhh}^{\www}(0,\proj_{\overleftarrow{\jjj\wedge\hhh}\iii}^{\www}(\sigma^n\hhh))|\\
	&\geq C^{-1}\inf_{\xi\in[0,1]}|(g_{\jjj\wedge\hhh}^{\www})'_y(0,\xi)||\proj_{\overleftarrow{\jjj\wedge\hhh}\iii}^{\www}(\sigma^n\jjj)-\proj_{\overleftarrow{\jjj\wedge\hhh}\iii}^{\www}(\sigma^n\hhh)|
\intertext{and, by \eqref{eq:bd} and \eqref{eq:boundedparam},}
	&\geq C'^{-1}\inf_{\xi\in[0,1]}|(g_{\jjj\wedge\hhh}^{\www_0})'_y(0,0)|e^{-n\varepsilon}|\proj_{\overleftarrow{\jjj\wedge\hhh}\iii}^{\www}(\sigma^n\jjj)-\proj_{\overleftarrow{\jjj\wedge\hhh}\iii}^{\www}(\sigma^n\hhh)|\\
	&\geq C'^{-1}e^{-(\chi_1(\mu,\ttt_0)+2\varepsilon)n}|\proj_{\overleftarrow{\jjj\wedge\hhh}\iii}^{\www}(\sigma^n\jjj)-\proj_{\overleftarrow{\jjj\wedge\hhh}\iii}^{\www}(\sigma^n\hhh)|
\end{align*}
for all $n\geq M$, $(\jjj,\hhh)\in A_n$, and $\iii\in\Sigma$, where in the last inequality we applied the definition of the set $E_{M,\varepsilon}$. Thus, by the transversality condition \ref{eq:cond5},
\begin{align*}
&\mathcal{L}^m(\{\www\in B(\www_0,\delta):|\proj_{\iii}^\www(\jjj)-\proj_{\iii}^\www(\hhh)|<r^{-1/s}\})\\
&\qquad\leq\mathcal{L}^m(\{\www\in B(\www_0,\delta):|\proj_{\overleftarrow{\jjj\wedge\hhh}\iii}^{\www}(\sigma^n\jjj)-\proj_{\overleftarrow{\jjj\wedge\hhh}\iii}^{\www}(\sigma^n\hhh)|<e^{n(\chi_1(\mu,\ttt_0)+2\varepsilon)}C'r^{-1/s}\})\\
&\qquad\leq C''\min\{\mathcal{L}^m(B(\www_0,\delta)),e^{n(\chi_1(\mu,\ttt_0)+2\varepsilon)}r^{-1/s}\}
\end{align*}
for all $(\jjj,\hhh)\in A_n$, $\iii\in\Sigma$, and $r>0$. Therefore, we have
\begin{align*}
	\int_{B(\www_0,\delta)}&\iiint_{\Sigma\times E_{\varepsilon,M}}\frac{\dd\mu(\jjj)\dd\mu(\hhh)\dd\overleftarrow{\mu}(\iii)\dd\mathcal{L}^m(\www)}{|\proj_{\iii}^\www(\jjj)-\proj_{\iii}^\www(\hhh)|^s}\\
		&\leq\sum_{n=0}^\infty\iiint_{A_n}\int_0^\infty C''\min\{\mathcal{L}^m(B(\www_0,\delta)),\\ 
    &\qquad\qquad\qquad e^{n(\chi_1(\mu,\ttt_0)+2\varepsilon)}r^{-1/s}\}\dd r\dd\mu(\jjj)\dd\mu(\hhh)\dd\overleftarrow{\mu}(\iii)\\
		&\leq\sum_{n=0}^\infty\iint_{A_n}\tilde{C}e^{ns(\chi_1(\mu,\ttt_0)+2\varepsilon)}\dd\mu(\jjj)\dd\mu(\hhh)\\
		&\leq\sum_{n=0}^\infty\sum_{\iii\in\Sigma_n}\tilde{C}e^{n(s(\chi_1(\mu,\ttt_0)+2\varepsilon)-h(\mu)+\varepsilon)}\mu([\iii])<\infty\\
		&\leq\sum_{n=0}^\infty\tilde{C}e^{n(s(\chi_1(\mu,\ttt_0)+2\varepsilon)-h(\mu)+\varepsilon)}<\infty,
	\end{align*}
which implies \eqref{eq:en2}.
\end{proof}

To prove Proposition~\ref{prop:main2}, we utilize the method of Bara\'nski, Gutman, and \'Spiewak \cite[proof of Theorem~1.12(i)]{baranski2023regularity}.

\begin{proof}[Proof of Proposition~\ref{prop:main2}]
	Let $\vvv_0\in V$ be as in the assumption. Write $\alpha=h(\mu)-\chi_2(\mu)\ldimh(\pi^1)_*\mu$ and fix $\www_0\in W$. Choose $\varepsilon>0$ such that $\chi_1(\mu,\ttt_0)>\alpha+3\varepsilon$, where $\ttt_0=(\vvv_0,\www_0)$. It is enough to show that for $\mathcal{L}^m$-almost every $\www$ there exists $\Omega_\www\subset\Sigma$ such that $\pi_\www|_{\Omega_\www}$ is injective, and so $\mu_\iii^{\pi_\www}=\delta_\iii$ for $\mu$-almost every $\iii$. In order to do so, we will show that
	$$
	\mathcal{L}^m\times\mu(A)=0,
	$$
	where
	$$
	A=\{(\www,\iii)\in W\times\Sigma:\text{there is }\jjj\in\Sigma\setminus\{\iii\}\text{ such that }\pi_\www(\iii)=\pi_\www(\jjj)\}.
	$$
	For each $\varepsilon>0$ let $\delta>0$ be such that \eqref{eq:boundedparam} holds. Moreover, let the sets $E_{M,\varepsilon}$ and $E$ be as in \eqref{eq:egorov}. Relying on the disintegration, it is enough to show that
	$$
	\mathcal{L}^m\times\mu_\hhh^{\pi^1}(A)=0,
	$$
	for $\mu$-almost every $\hhh$. Let $\Omega_\hhh$ be such that $\mu_\hhh^{\pi^1}(\Omega_\hhh)=1$ and $h_{\rm top}(\Omega_\hhh)\leq\alpha+\varepsilon$. That is, for every $\eta>0$ and every $M\geq1$ there exists a countable family $\CC_{\delta,M}$ of cylinders such that $|\iii|\geq N$ for all $\iii\in\CC_{\delta,M}$, $\Omega_\hhh\subset\bigcup_{\iii\in\CC_{\delta,M}}[\iii]$, and $\sum_{\iii\in\CC_{\delta,M}}e^{-|\iii|(\alpha+\varepsilon)}<\eta$. For each $\www\in B(\www_0,\delta)$ define
  \begin{align*}
    A_{\www,n,M}=\{\iii\in\Omega_\hhh\cap E_{\varepsilon,M}:\;&\text{there is }\jjj\in\Omega_\hhh\cap E_{\varepsilon,M}\setminus\{\iii\}\\ 
    &\text{such that }|\jjj\wedge\iii|\leq n\text{ and }\pi_\www^2(\iii)=\pi_\www^2(\jjj)\}
  \end{align*}
	and for every $\iii\in \Omega_\hhh\cap E_{\varepsilon,M}$ let
  \begin{align*}
    A_{\iii,n,M}=\{\www\in B(\www_0,\delta):\;&\text{there is }\jjj\in\Omega_\hhh\cap E_{\varepsilon,M}\setminus\{\iii\}\\ 
    &\text{such that }|\jjj\wedge\iii|\leq n\text{ and }\pi_\www^2(\iii)=\pi_\www^2(\jjj)\}.
  \end{align*}
	If $\pi_\www^2(\iii)=\pi_\www^2(\jjj)$ for some $\jjj\in A_{\www,n,M}$ then, for every $\kkk\in\Sigma_*$ such that $|\kkk|\geq M$ and $\jjj\in[\kkk]$, we have
	\begin{equation*}
		|\pi_\www^2(\iii)-g_{\kkk}^\www(\pi^1(\sigma^{|\kkk|}\jjj),0)|\leq |(g_\kkk^\www)'_y(\pi^1(\sigma^{|\kkk|}\jjj),\xi)|\leq e^{-|\kkk|(\chi_1(\mu,\ttt_0)-2\varepsilon)}.
	\end{equation*}
	for all $\www\in B(\www_0,\delta)$.	Hence, for every $\iii\in\Omega_\hhh\cap E_{\varepsilon,M}$
	$$
	A_{\iii,n,M}\subseteq\bigcup_{\kkk\in\CC_{\delta,M}}\{\www\in B(\www_0,\delta):	|\pi_\www^2(\iii)-g_{\kkk}^\www(\pi^1(\sigma^{|\kkk|}\jjj(\kkk)),0)|\leq  e^{-|\kkk|(\chi_1(\mu,\ttt_0)-2\varepsilon)}\},
	$$
	where $\jjj(\kkk)\in\Omega_\hhh\cap E_{\varepsilon,M}\cap[\kkk]$ is arbitrary.
	
	By the transversality condition \ref{eq:cond5}, there exists $C>0$ such that
	\begin{align*}
		\mathcal{L}^m(\{&\www\in B(\www_0,\delta):	|\pi_\www^2(\iii)-g_{\kkk}^\www(\pi^1(\sigma^{|\kkk|}\jjj(\kkk)),0)|\leq  e^{-|\kkk|(\chi_1(\mu,\ttt_0)-2\varepsilon)}\})\\
		&\qquad\qquad\leq \tau^{-n}\mathcal{L}^m(B(\www_0,\delta))Ce^{-|\kkk|(\chi_1(\mu,\ttt_0)-2\varepsilon)}
	\end{align*}
  for all $\kkk\in\CC_{\delta,M}$, where $\tau<\inf_{x,y}|(g_i)_y'(x,y)|$. Hence,
	\begin{align*}
		\mathcal{L}^m(A_{\iii,n,M})&\leq\tau^{-n}\mathcal{L}^m(B(\www_0,\delta))C\sum_{\kkk\in\CC_{\delta,M}}e^{-|\kkk|(\chi_1(\mu,\ttt_0)-2\varepsilon)}\\ 
    &\leq \tau^{-n}\mathcal{L}^m(B(\www_0,\delta))C\sum_{\kkk\in\CC_{\delta,M}}e^{-|\kkk|(\alpha+\varepsilon)}\\
		&\leq\tau^{-n}\mathcal{L}^m(B(\www_0,\delta))C\eta.
	\end{align*}
	Since $\eta>0$ was arbitrary, we see that $\mathcal{L}^m(A_{\iii,n,M})=0$ for all $n,M\geq1$ and for every $\iii\in\Omega_\hhh\cap E_{\varepsilon,M}$. Thus,
	\begin{equation*}
		\mathcal{L}^m\times\mu_\hhh^{\pi^1}(A)=\mathcal{L}^m\times\mu_\hhh^{\pi^1}(A\cap\Omega_\hhh)=\int\mathcal{L}^m\biggl(\bigcup_{n=1}^\infty\bigcup_{M=1}^\infty A_{\iii,n,M}\biggr)\dd\mu_\hhh^{\pi^1}(\iii)=0
	\end{equation*}
  as wished.
\end{proof}

Let us finish this section by proving Proposition~\ref{prop:main3}.

\begin{proof}[Proof of Proposition~\ref{prop:main3}]
	Fix $\www_0\in V$ and choose $\varepsilon>0$ such that $h(\mu)-\chi_2(\mu)-\chi_1(\mu,\www_0)-3\varepsilon>0$. Let $A_\kkk=\{(\iii,\jjj)\in\Sigma\times\Sigma:\iii\wedge\jjj=\kkk\}$ for all $\kkk\in\Sigma_*$ and $E_{M,\varepsilon}$ be the set defined in \eqref{eq:egorov}. To prove absolute continuity, it is enough to show that for every $\varepsilon>0$ and $M\geq1$ we have
  \begin{equation*}
    (\pi^2_\www)_*\mu_\hhh^{\pi^1}\ll\mathcal{L}
  \end{equation*}
  for $\mu$-almost all $\hhh\in E_{\varepsilon,M}$ and $\mathcal{L}$-almost all $\www\in B(\www_0,\delta)$. To verify this, by \cite[Theorem~2.12]{Mattila1995}, it suffices to show that $\underline{D}((\pi^2_\www)_*\mu_\hhh^{\pi^1},x)<\infty$ for $(\pi^2_\www)_*\mu_\hhh^{\pi^1}|_{E_{M,\varepsilon}}$-almost all $x$, where
	$$
	\underline{D}((\pi^2_\www)_*\mu_\hhh^{\pi^1},x)=\liminf_{r\downarrow 0}\frac{(\pi^2_\www)_*\mu_\hhh^{\pi^1}(B(x,r))}{2r}.
	$$
	Similarly to the proof of Proposition~\ref{prop:main1}, we have
	\begin{align*}
		\mathcal{L}^m(\{&\www\in B(\www_0,\delta):|\pi_\www^2(\iii)-\pi_\www^2(\jjj)|<r\})\\
		&\qquad\qquad\leq \mathcal{L}^m(\{\www\in B(\www_0,\delta):|\pi_\www^2(\sigma^{|\kkk|}\iii)-\pi_\www^2(\sigma^{|\kkk|}\jjj)|<Ce^{|\kkk|(\chi_1(\mu,\www_0)+2\varepsilon)}r\})\\
		&\qquad\qquad\leq C'e^{|\kkk|(\chi_1(\mu,\www_0)+2\varepsilon)}r
	\end{align*}
  for all $r>0$ and $(\iii,\jjj)\in A_{\kkk}\cap \Sigma\times E_{\varepsilon,M}$ with $|\kkk|\geq M$. Thus, by applying Fatou's lemma, we get
	\begin{align*}
		\iint &\underline{D}((\pi^2_\www)_*\mu_\hhh^{\pi^1},x)\dd(\pi^2_\www)_*\mu_\hhh^{\pi^1}|_{E_{\varepsilon,M}}(x)\dd\www\\
		&\leq\liminf_{r\to0}\frac{1}{2r}\iint \mathcal{L}^m(\{\www\in B(\www_0,\delta):\\ 
    &\qquad\qquad\qquad\qquad|\pi_\www^2(\iii)-\pi_\www^2(\jjj)|<r\})\dd\mu_\hhh^{\pi^1}(\iii)\dd\mu_\hhh^{\pi^1}|_{E_{\varepsilon,M}}(\jjj)\\
		&=\liminf_{r\to0}\frac{1}{2r}\sum_{n=0}^\infty\sum_{\kkk\in\Sigma_n}\iint_{A_\kkk} \mathcal{L}^m(\{\www\in B(\www_0,\delta):\\ 
    &\qquad\qquad\qquad\qquad|\pi_\www^2(\iii)-\pi_\www^2(\jjj)|<r\})\dd\mu_\hhh^{\pi^1}(\iii)\dd\mu_\hhh^{\pi^1}|_{E_{\varepsilon,M}}(\jjj)\\
		&\leq\sum_{n=0}^\infty\sum_{\kkk\in\Sigma_n}C'e^{n(\chi_1(\mu,\www_0)+2\varepsilon)}\mu_\hhh^{\pi^1}\times(\mu_\hhh^{\pi^1}|_{E_{\varepsilon,M}})(A_\kkk)\\
		&\leq\sum_{n=0}^\infty\sum_{\kkk\in\Sigma_n}C'e^{n(\chi_1(\mu,\www_0)+2\varepsilon)}e^{-n(h_{\mu}-\chi_2(\mu)-\varepsilon)}\mu_\hhh^{\pi^1}([\kkk])\\
    &\leq\sum_{n=0}^\infty C'e^{-n(h_{\mu}-\chi_2(\mu)-\chi_1(\mu,\www_0)-3\varepsilon)}<\infty,
	\end{align*}
which completes the proof.
\end{proof}

\section{Verifying the examples} \label{sec:verify-examples}

In this section, we verify that the parametrized iterated function systems given in Section~\ref{sec:examples} satisfy the assumptions \ref{eq:cond1}--\ref{eq:cond7}. Theorems \ref{thm:mainex1} and \ref{thm:mainex1b} follow then immediately from Theorem \ref{thm:meta}. Let us first demonstrate that, besides the transversality condition \ref{eq:cond5}, all the other conditions hold almost automatically.

\begin{proposition}\label{prop:ex}
Suppose that $\Phi^\ttt$ satisfies the assumptions \ref{it:condLY1}--\ref{it:condLY2} with $\rho<1/2$ on the open and bounded sets $V,W\subset\R^{N}$ such that $\vvv=(t_{1,1},\ldots,t_{N,1})\in V$ and $\www=(t_{1,2},\ldots,t_{N,2})\in W$. Then $\Phi^{\ttt}$ satisfies \ref{eq:cond1}--\ref{eq:cond4} and \ref{eq:cond6}--\ref{eq:cond7}.
\end{proposition}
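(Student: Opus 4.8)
The plan is to verify \ref{eq:cond1}--\ref{eq:cond4}, \ref{eq:cond6} and \ref{eq:cond7} in turn. In the present setting $F_i^\ttt(x,y)=(f_i(x)+t_{i,1},g_i(x,y)+t_{i,2})$, so the derivatives $f_i'$, $(g_i)_x'$, $(g_i)_y'$, $(g_i)_{xy}''$, $(g_i)_{yy}''$ of the generators do not depend on $\ttt$, and the whole argument rests on this. Conditions \ref{eq:cond1}--\ref{eq:cond3} are then immediate, \ref{eq:cond4} and \ref{eq:cond6} follow from a uniform-in-$\iii$ perturbation estimate for the iterates $F_\iii^\ttt$, and \ref{eq:cond7} is proved by the transversality method along a carefully chosen parameter direction. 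I expect \ref{eq:cond7} to be the only genuine obstacle: varying a single translation coordinate only produces a positive derivative of the difference map when $\rho<1/3$, and one has to combine two coordinates to exploit the hypothesis $\rho<1/2$.

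\emph{Conditions \ref{eq:cond1}--\ref{eq:cond3}.} Since $g_i^\www(x,y)=g_i(x,y)+t_{i,2}$, we have $(g_i^\www)_x'=(g_i)_x'$, $(g_i^\www)_{xy}''=(g_i)_{xy}''$ and $(g_i^\www)_{yy}''=(g_i)_{yy}''$, which are continuous on $[0,1]^2$ by \ref{it:condLY1} and hence bounded (uniformly in $\www$, there being no $\www$-dependence), and $f_i^\vvv,g_i^\www$ are $C^2$ because $f_i,g_i$ are; this is \ref{eq:cond1}. For \ref{eq:cond2}, note $(f_i^\vvv)'=f_i'$ and $(g_i^\www)_y'=(g_i)_y'$, so \ref{it:condLY2} reads $\tau<|f_i'|<|(g_i)_y'|<\rho$; taking $\gamma\in\bigl(\max_i\sup_{[0,1]^2}|f_i'|/|(g_i)_y'|,\,1\bigr)$, which is possible because this maximum is attained and lies in $(0,1)$, gives $\tau<\gamma^{-1}|(f_i^\vvv)'|<|(g_i^\www)_y'|<\rho$ with $\rho,\gamma,\tau\in(0,1)$. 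Finally \ref{eq:cond3} holds because $\vvv\mapsto f_i^\vvv(x)$ and $\www\mapsto g_i^\www(x,y)$ are affine.

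\emph{Conditions \ref{eq:cond4} and \ref{eq:cond6}.} The key point is a uniform perturbation bound: there is $C_0>0$, depending only on $\rho$ and $C_x:=\max_i\|(g_i)_x'\|_\infty$, with $\|F_\iii^\ttt-F_\iii^{\ttt_0}\|_\infty\le C_0\|\ttt-\ttt_0\|$ for every $\iii\in\Sigma_*$. This follows by induction on $|\iii|$ from $F_\iii^\ttt=F_{i_1}^\ttt\circ F_{\sigma\iii}^\ttt$: the first coordinate satisfies $\|f_\iii^\vvv-f_\iii^{\vvv_0}\|_\infty\le\rho\|f_{\sigma\iii}^\vvv-f_{\sigma\iii}^{\vvv_0}\|_\infty+\|\vvv-\vvv_0\|$, hence $\|f_\iii^\vvv-f_\iii^{\vvv_0}\|_\infty\le(1-\rho)^{-1}\|\vvv-\vvv_0\|$, and the second coordinate satisfies an analogous telescoping relation into which a term $C_x\|f_{\sigma\iii}^\vvv-f_{\sigma\iii}^{\vvv_0}\|_\infty$ enters and is controlled by the previous bound. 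By the chain rule $(g_\iii^\ttt)_y'(x,y)=\prod_{k=1}^{|\iii|}(g_{i_k})_y'\bigl(F_{i_{k+1}\cdots i_{|\iii|}}^\ttt(x,y)\bigr)$, and since each factor is $\ttt$-independent,
\[
\Bigl|\log\tfrac{|(g_\iii^\ttt)_y'(x,y)|}{|(g_\iii^{\ttt_0})_y'(x,y)|}\Bigr|\le\sum_{k=1}^{|\iii|}L\,\bigl\|F_{i_{k+1}\cdots i_{|\iii|}}^\ttt-F_{i_{k+1}\cdots i_{|\iii|}}^{\ttt_0}\bigr\|_\infty\le|\iii|\,L\,C_0\|\ttt-\ttt_0\|,
\]
where $L:=\max_i\mathrm{Lip}\bigl(\log|(g_i)_y'|\bigr)<\infty$ because $(g_i)_y'$ is $C^1$ and $\ge\tau>0$; choosing $\delta=\varepsilon/(LC_0)$ gives \ref{eq:cond4}. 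The identical computation with $(f_\iii^\vvv)'=\prod f_{i_k}'(\cdot)$ and $\log|f_i'|$ in place of $\log|(g_i)_y'|$ gives \ref{eq:cond6}.

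\emph{Condition \ref{eq:cond7}.} Fix $\iii,\jjj\in\Sigma$ with $a:=i_1\ne j_1=:b$ and set $\Delta(\vvv)=\pi_\vvv^1(\iii)-\pi_\vvv^1(\jjj)$. From $\pi_\vvv^1(\kkk)=f_{k_1}(\pi_\vvv^1(\sigma\kkk))+t_{k_1,1}$ one checks (cf. \cite[proof of Lemma~4.1]{SSU01}) that $\vvv\mapsto\pi_\vvv^1(\kkk)$ is $C^1$ with $\partial\pi_\vvv^1(\kkk)/\partial t_{\ell,1}=\sum_{n\ge1}\bigl(\prod_{m<n}f_{k_m}'(\pi_\vvv^1(\sigma^m\kkk))\bigr)\delta_{k_n,\ell}$. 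Let $e_1,\dots,e_N$ be the standard basis of the $\vvv$-space $\R^N$ (the $\ell$-th coordinate of $\vvv$ being $t_{\ell,1}$) and differentiate $\Delta$ along $w:=e_a-e_b$. For $\kkk\in\Sigma$ set
\[
E(\kkk):=\frac{\partial\pi_\vvv^1(\kkk)}{\partial t_{a,1}}-\frac{\partial\pi_\vvv^1(\kkk)}{\partial t_{b,1}}=\sum_{n\ge1}\Bigl(\prod_{m<n}f_{k_m}'(\pi_\vvv^1(\sigma^m\kkk))\Bigr)\bigl(\delta_{k_n,a}-\delta_{k_n,b}\bigr).
\]
Since $a\ne b$ each coefficient $\delta_{k_n,a}-\delta_{k_n,b}$ lies in $\{-1,0,1\}$, so $|E(\kkk)|\le\sum_{n\ge1}\rho^{\,n-1}=(1-\rho)^{-1}$, and $E$ obeys $E(\kkk)=(\delta_{k_1,a}-\delta_{k_1,b})+f_{k_1}'(\pi_\vvv^1(\sigma\kkk))\,E(\sigma\kkk)$. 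As $i_1=a$ this gives $E(\iii)=1+f_{i_1}'(\cdot)E(\sigma\iii)\ge 1-\rho/(1-\rho)=(1-2\rho)/(1-\rho)$, and as $j_1=b$ it gives $E(\jjj)=-1+f_{j_1}'(\cdot)E(\sigma\jjj)\le-(1-2\rho)/(1-\rho)$. Hence, for all $\vvv\in V$ and all such $\iii,\jjj$,
\[
\frac{\partial\Delta(\vvv)}{\partial t_{a,1}}-\frac{\partial\Delta(\vvv)}{\partial t_{b,1}}=E(\iii)-E(\jjj)\ge\frac{2(1-2\rho)}{1-\rho}=:c_0>0 .
\]
Writing $\R^N=w^\perp\oplus\R w$, along each line $\vvv'+sw$ with $\vvv'\in w^\perp$ the map $s\mapsto\Delta(\vvv'+sw)$ is strictly increasing with derivative $\ge c_0$, so $\{s:\vvv'+sw\in V,\ |\Delta(\vvv'+sw)|<r\}$ lies in an interval of length $\le 2r/c_0$; integrating over $\vvv'$ in the bounded projection of $V$ onto $w^\perp$ and accounting for the constant Jacobian of the coordinate change gives $\mathcal{L}^N(\{\vvv\in V:|\Delta(\vvv)|<r\})\le C_{a,b}\,r$. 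Taking $C=\max_{a\ne b}C_{a,b}$ proves \ref{eq:cond7}.
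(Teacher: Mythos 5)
Your proposal is correct. For \ref{eq:cond1}--\ref{eq:cond4} it follows essentially the same route as the paper: \ref{eq:cond1}--\ref{eq:cond3} by compactness and the $\ttt$-independence of all derivatives, and \ref{eq:cond4} via the uniform perturbation bound $\|F_\iii^\ttt-F_\iii^{\ttt_0}\|_\infty\le C_0\|\ttt-\ttt_0\|$ combined with the chain-rule product for $(g_\iii^\ttt)_y'$ and the Lipschitz continuity of $\log|(g_i)_y'|$; this is exactly the paper's computation. The one genuine difference is that for \ref{eq:cond6} and \ref{eq:cond7} the paper simply cites Lemmas 3.1 and 3.3 of Simon--Solomyak, whereas you prove them in-line. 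Your argument for \ref{eq:cond7} is the right one and correctly identifies where the hypothesis $\rho<1/2$ is used: differentiating $\Delta$ along the single coordinate $t_{a,1}$ only yields a lower bound $(1-3\rho)/(1-\rho)$, which would force $\rho<1/3$, while the direction $e_a-e_b$ gives $E(\iii)-E(\jjj)\ge 2(1-2\rho)/(1-\rho)>0$; this is the same mechanism as in the cited Simon--Solomyak lemma, and your Fubini argument along the foliation by lines parallel to $w$ (with the bounded projection of $V$ onto $w^\perp$ and the constant Jacobian) correctly converts the derivative bound into the measure estimate. The term-by-term differentiation of the series for $\pi^1_\vvv(\kkk)$ is justified by uniform geometric convergence of the differentiated series, as you indicate by the reference to \cite{SSU01}. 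So the proposal buys self-containedness at the cost of a page of computation that the paper delegates to the literature; nothing is missing.
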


\begin{proof}
	The conditions \ref{eq:cond1}-\ref{eq:cond3} hold trivially by the compactness of $[0,1]^2$. Furthermore, \ref{eq:cond6} follows by Simon and Solomyak \cite[Lemma~3.1]{SiSo} and \ref{eq:cond7} follows by \cite[Lemma~3.3]{SiSo}. The proof of \ref{eq:cond4} is similar to the proof of \ref{eq:cond6} but we give the details for completeness. Observe that
	$$
	  |f_{\iii}^{\vvv}(x)-f_{\iii}^{\vvv_0}(x)| \leq \frac{|\vvv-\vvv_0|}{1-\rho}
	$$
  for all $\iii\in\Sigma_*$, $x\in[0,1]$, and $\vvv,\vvv_0\in V$. Furthermore, for every $(x,y)\in[0,1]^2$ and $\ttt,\ttt_0\in V\times W$, we have
	\begin{align*}
	|g_{\iii}^{\ttt}(x,y)-g_{\iii}^{\ttt_0}(x,y)| &\leq |t_{i_1,2}-t_{i_1,2}'|+\sup|(g_{i_1})'_x||f_{\sigma\iii}^{\vvv}(x)-f_{\sigma\iii}^{\vvv_0}(x)|\\ 
  &\qquad\qquad+\sup|(g_{i_1})'_y||g_{\sigma\iii}^{\ttt}(x,y)-g_{\sigma\iii}^{\ttt_0}(x,y)|
	\end{align*}
	and hence, by the induction,
	$$
    |g_{\iii}^{\ttt}(x,y)-g_{\iii}^{\ttt_0}(x,y)| \leq \frac{|\www-\www_0|(1-\rho)+C|\vvv-\vvv_0|}{(1-\rho)^2}.
	$$
	Therefore,
	\begin{align*}
	\log\frac{|(g_{\iii}^{\ttt})'_y(x,y)|}{|(g_{\iii}^{\ttt_0})'_y(x,y)|}&=\sum_{k=1}^{|\iii|}\log\frac{|(g_{i_k})'_y(F_{\sigma^k\iii}^{\ttt}(x,y))|}{|(g_{i_k})'_y(F_{\sigma^k\iii}^{\ttt_0}(x,y))|}\\
	&=\sum_{k=1}^{|\iii|}\log\frac{|(g_{i_k})'_y(f_{\sigma^k\iii}^{\vvv}(x),g_{\sigma^k\iii}^{\ttt}(x,y))|}{|(g_{i_k})'_y(f_{\sigma^k\iii}^{\vvv}(x),g_{\sigma^k\iii}^{\ttt_0}(x,y))|}\\ 
  &\qquad\qquad+\log\frac{|(g_{i_k})'_y(f_{\sigma^k\iii}^{\vvv}(x),g_{\sigma^k\iii}^{\ttt_0}(x,y))|}{|(g_{i_k})'_y(f_{\sigma^k\iii}^{\vvv_0}(x),g_{\sigma^k\iii}^{\ttt_0}(x,y))|}\\	&=\sum_{k=1}^{|\iii|}\frac{|(g_{i_k})''_{yy}(f_{\sigma^k\iii}^{\vvv}(x),\xi)|}{|(g_{i_k})'_y(f_{\sigma^k\iii}^{\vvv}(x),\xi)|}|g_{\sigma^k\iii}^{\ttt}(x,y)-g_{\sigma^k\iii}^{\ttt_0}(x,y)|\\
	&\qquad\qquad+\sum_{k=1}^{|\iii|}\frac{|(g_{i_k})''_{yx}(\zeta,g_{\sigma^k\iii}^{\ttt_0}(x,y))|}{|(g_{i_k})'_y(\zeta,g_{\sigma^k\iii}^{\ttt_0}(x,y))|}|f_{\sigma^k\iii}^{\vvv}(x)-f_{\sigma^k\iii}^{\vvv_0}(x)|\\
	&\leq |\iii|C'(|\vvv-\vvv_0|+|\www-\www_0|),
	\end{align*}
which had to be proven.
\end{proof}

In view of Proposition~\ref{prop:ex}, it is enough to verify that both the conditions \ref{it:1}--\ref{it:3} and \ref{it:1b}--\ref{it:3b} imply the transversality condition \ref{eq:cond5}. The following general lemma highlights that to show the transversality, it suffices to study derivatives. The proof we present is standard, and is similar to \cite[proof of Lemma~3.3]{SiSo}.

\begin{lemma}\label{lem:techtrans}
	Suppose that $X$ is a compact metric space and $f\colon U\times X\to\R$ is a continuous map such that $\ttt\mapsto f(\ttt,x)$ is continuously differentiable on an open and bounded set $U \subset \R^d$. If for every $x\in X$ there exists $i\in\{1,\ldots,d\}$ such that 
	$$
    f(\ttt_0,x)=0 \quad\Rightarrow\quad \frac{\partial}{\partial t_i}f(\ttt,x)\Big|_{\ttt=\ttt_0}>0,
	$$
	then for every compact set $K\subset U$ there exist $C>0$ such that
	$$
    \mathcal{L}^d(\{\ttt\in K:|f(\ttt,x)|\leq r\})\leq Cr
	$$
  for all $x\in X$ and all $r>0$.
\end{lemma}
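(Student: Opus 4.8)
The plan is to run the classical transversality argument, following \cite[proof of Lemma~3.3]{SiSo}: near the zero set of $f$ one parameter-derivative is bounded away from $0$, and away from the zero set $|f|$ admits a uniform lower bound by compactness. I fix once and for all a compact $K'$ with $K\subset\inter(K')\subset K'\subset U$ and take $r_0$ and all balls produced below small enough that the relevant neighbourhoods of $K$ stay inside $K'\subset U$; I also use that the partial derivatives $\partial f/\partial t_i$ are continuous on $U\times X$ (which is how I read the hypothesis on $f$, and is what holds in the intended applications). If $\mathcal{Z}:=\{(\ttt_0,x_0)\in K'\times X:f(\ttt_0,x_0)=0\}$ is empty the lemma is immediate, since then $|f|$ is bounded below on $K\times X$ by compactness; so assume $\mathcal{Z}\ne\varnothing$.

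\emph{Step 1 (uniform transversality near $\mathcal{Z}$).} For each $(\ttt_0,x_0)\in\mathcal{Z}$ the hypothesis provides $i$ with $\partial f/\partial t_i(\ttt_0,x_0)>0$, and by continuity $\partial f/\partial t_i>c>0$ on a product neighbourhood of $(\ttt_0,x_0)$. Covering the compact set $\mathcal{Z}$ by finitely many such neighbourhoods and taking a Lebesgue number, I obtain $r_0,c_0>0$ such that for every $(\ttt_0,x_0)\in\mathcal{Z}$ there is $i=i(\ttt_0,x_0)$ with $\partial f/\partial t_i(\ttt,x)\ge c_0$ whenever $|\ttt-\ttt_0|<r_0$ and $d_X(x,x_0)<r_0$. \emph{Step 2 (estimate near $Z_x$).} Fix $x\in X$ and set $Z_x=\{\ttt\in K':f(\ttt,x)=0\}$. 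A maximal $(r_0/2)$-separated subset of $Z_x$ has at most $m_0$ points, where $m_0$ is the $(r_0/2)$-packing number of $K'$ (independent of $x$), and the $r_0$-balls $B(\ttt^{(\ell)},r_0)$ around these points cover the $(r_0/2)$-neighbourhood of $Z_x$; on each such ball one fixed partial derivative $\partial f/\partial t_{i_\ell}(\cdot,x)$ is $\ge c_0$ by Step 1, with $i_\ell=i(\ttt^{(\ell)},x)$. Integrating along lines parallel to the $i_\ell$-th coordinate axis and using that a $C^1$ function with derivative $\ge c_0$ takes values in a window of diameter $2r$ on a set of length $\le 2r/c_0$, Fubini gives $\mathcal{L}^d(\{\ttt\in B(\ttt^{(\ell)},r_0):|f(\ttt,x)|\le r\})\le C_1 r$ with $C_1=C_1(d,r_0,c_0)$; summing over $\ell$, $\mathcal{L}^d(\{\ttt\in K:\dist(\ttt,Z_x)<r_0/2\text{ and }|f(\ttt,x)|\le r\})\le m_0C_1 r$ for all $r>0$.

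\emph{Step 3 (uniform gap away from $Z_x$ — the crux).} I claim $\varepsilon_0:=\inf\{|f(\ttt,x)|:x\in X,\ \ttt\in K,\ \dist(\ttt,Z_x)\ge r_0/2\}>0$. Otherwise choose $\ttt_n\in K$, $x_n\in X$ with $\dist(\ttt_n,Z_{x_n})\ge r_0/2$ and $f(\ttt_n,x_n)\to0$, and pass to a subsequence with $\ttt_n\to\ttt_*\in K$, $x_n\to x_*\in X$; then $f(\ttt_*,x_*)=0$, so $(\ttt_*,x_*)\in\mathcal{Z}$ and Step 1 supplies a direction $e_i$ with $\partial f/\partial t_i(\ttt,x)\ge c_0$ for $|\ttt-\ttt_*|<r_0$ and $d_X(x,x_*)<r_0$. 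Fix $\delta<r_0/2$. For $x$ close to $x_*$ we have $|f(\ttt_*,x)|<c_0\delta$, and since $s\mapsto f(\ttt_*+se_i,x)$ has derivative $\ge c_0$, this function is positive at $s=\delta$ and negative at $s=-\delta$; hence $f(\cdot,x)$ has a zero within $\delta$ of $\ttt_*$, which lies in $K'$ and so in $Z_x$. Applying this to $x=x_n$ for large $n$ forces $\dist(\ttt_n,Z_{x_n})\le|\ttt_n-\ttt_*|+\delta<r_0/2$, a contradiction. Hence $\varepsilon_0>0$.

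For $r<\varepsilon_0$, Step 3 confines $\{\ttt\in K:|f(\ttt,x)|\le r\}$ to the $(r_0/2)$-neighbourhood of $Z_x$, so Step 2 bounds its measure by $m_0C_1 r$; for $r\ge\varepsilon_0$ the trivial bound $\mathcal{L}^d(K)\le(\mathcal{L}^d(K)/\varepsilon_0)r$ finishes, and $C=\max\{m_0C_1,\mathcal{L}^d(K)/\varepsilon_0\}$ is independent of $x$. I expect Step 3 to be the only non-routine point: the hypothesis is used there not merely to control $|f|$ near a single zero, but to prevent zeros of $f(\cdot,x)$ from appearing or disappearing as $x$ varies — the positivity of $\partial f/\partial t_i$ makes $x\mapsto Z_x$ lower semicontinuous near its points, which is exactly what yields the uniform gap $\varepsilon_0$. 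A minor technical point is the joint continuity of the $\partial f/\partial t_i$ invoked in Step 1, which should be matched against the precise reading of the hypothesis (and Step 1 adapted if only $\ttt$-continuity is granted).
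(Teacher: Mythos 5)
Your proof is correct and follows essentially the same route as the paper's: a compactness argument yielding a uniform lower bound on some partial derivative wherever $|f|$ is small, followed by a Fubini/one-dimensional monotonicity estimate along that coordinate. The only organizational difference is that the paper's single uniform dichotomy---there is $\delta>0$ such that $|f(\ttt,x)|<\delta$ forces $|\frac{\partial}{\partial t_i}f(\ttt,x)|\ge\delta$ for some $i$, uniformly over $K\times X$---subsumes your Steps 1 and 3, making the detour through $Z_x$, the packing number $m_0$, and the gap $\varepsilon_0$ unnecessary; note also that the paper's limiting argument implicitly uses the same joint continuity of the partial derivatives that you flag explicitly.
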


\begin{proof}
	Let $K\subset U$ be a compact set. Let us first show that there exists $\delta>0$ such that for every $x\in X$ and $\ttt\in K$ there is $i\in\{1,\ldots,d\}$ with
	\begin{equation}\label{eq:deriv}
    |f(\ttt,x)|<\delta \quad\Rightarrow\quad \Bigl|\frac{\partial}{\partial t_i}f(\ttt,x)\Bigr| \geq \delta.
	\end{equation}
	Let us argue by contradiction. That is, for every $n\in\N$ there exists $x_n\in\N$ and $\ttt_n\in K$ such that for every $i\in\{1,\ldots,d\}$
	$$
    |f(\ttt_n,x_n)| \leq \frac{1}{n} \qquad\text{and}\qquad \Bigl|\frac{\partial}{\partial t_i}f(\ttt_n,x_n)\Bigr|<\frac{1}{n}.
	$$
	By using the compactness and continuity, there exists a sequence $(n_k)_k$, $x\in X$, and $\ttt\in K$ such that $x = \lim_{k\to\infty}x_{n_k}$, $\ttt = \lim_{k\to\infty}\ttt_{n_k}$, and
	$$
    |f(\ttt,x)| = \Bigl|\frac{\partial}{\partial t_i}f(\ttt,x)\Bigr|=0
	$$
  for all $i\in\{1,\ldots,d\}$, which contradicts our assumption.
	
	Fix $x\in X$ and for every $i\in\{1,\ldots,d\}$, let $Q_i\subset K$ be the closed subset for which \eqref{eq:deriv} holds for $i$. Let us define a map $T_i\colon\R^d\to\R^d$ by setting
	$$
    T_i(t_1,\ldots,t_d)=(t_1,\ldots,t_{i-1},f(t_1,\ldots,t_d,x),t_{i+1},\ldots,t_d)
%    \begin{cases}
%      t_j, &\text{if }j\neq i,\\
%		  f(t_1,\ldots,t_d,x), &\text{if }j=i.
%    \end{cases}
	$$
	Furthermore, let $A_i=\R^{i-1}\times[-r,r]\times\R^{d-i}$ and $\Pi^i\colon\R^d\mapsto\R^{d-1}$ be the orthogonal projection along the $i$th coordinate axis. By \eqref{eq:deriv}, $T_i$ is a smooth and invertible map on $\{\ttt\in K:|f(\ttt,x)|<\delta\}$ as $|\det(D_\ttt T_i)|=|\frac{\partial}{\partial t_i}f(\ttt,x)|\geq\delta$. Therefore,
	\begin{align*}
	\mathcal{L}^d(\{\ttt\in Q_i:|f(\ttt,x)|<r\}) &= \mathcal{L}^d(T_i^{-1}(A_i)\cap Q_i)\\
	&\leq\max_{\ttt\in Q_i}|\det(D_\ttt T_i^{-1})|\mathcal{L}^d(A_i\cap T_i(K))\\
	&\leq\delta^{-1}\mathcal{L}^{d-1}(\Pi^i(T_i(K)))2r\\
	\end{align*}
  for all $r<\delta$ and
  \begin{equation*}
    \mathcal{L}^d(\{\ttt\in K:|f(\ttt,x)|<r\})\leq\mathcal{L}^d(K)\delta^{-1}r
  \end{equation*}
  for all $r\geq\delta$. This completes the proof.
\end{proof}

Throughout the remaining part of the section, we fix $\vvv_0=(t_{1,1},\ldots,t_{N,1})$ and, as in Section \ref{sec:proof-props}, we denote $\pi^1_{\vvv_0}$ simply by $\pi^1$.

\subsection{The first example}

In this subsection, we assume that $\Phi^\ttt$ satisfies the assumptions \ref{it:1}--\ref{it:3} and prove the transversality condition \ref{eq:cond5}. We will see that it follows from the following proposition.

\begin{proposition}\label{lem:basictrans}
  Suppose that $\Phi^\ttt$ satisfies the assumptions \ref{it:1}--\ref{it:3}. Then there exists $\delta>0$ such that for every $\iii,\jjj,\hhh\in\Sigma$ with $j_1\neq h_1$ and $\pi^1(\jjj)\leq\pi^1(\hhh)$ the following holds: if $y_\ttt(\iii,\pi_\ttt(\jjj),x)\equiv y_\ttt(\iii,\pi_\ttt(\hhh),x)$ as the function of $x$, then
	$$
	\frac{\partial}{\partial t_{j_1,2}}(y_\ttt(\iii,\pi_\ttt(\jjj),x)-y_\ttt(\iii,\pi_\ttt(\hhh),x))>\delta
	$$
	for all $x\in[0,1]$ and $\ttt=(\vvv_0,\www)\in V\times W$.
\end{proposition}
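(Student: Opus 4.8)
The plan is to represent the $t_{j_1,2}$-derivative of the leaf by variation of constants and to observe that the assumptions \ref{it:1}--\ref{it:2} force every term in that representation to carry the correct sign. I would fix $\iii,\jjj,\hhh$ with $j_1\neq h_1$ and $\pi^1(\jjj)\leq\pi^1(\hhh)$, suppose the leaves coincide, write $y(\,\cdot\,)$ for the common function $y_\ttt(\iii,\pi_\ttt(\jjj),\,\cdot\,)\equiv y_\ttt(\iii,\pi_\ttt(\hhh),\,\cdot\,)$, and put $s=t_{j_1,2}$; throughout, $\partial_s u_\ttt$ denotes the partial derivative of $u_\ttt$ with respect to $t_{j_1,2}$ at fixed $(\iii,x,y)$, i.e.\ the derivative of the explicit $\www$-dependence coming from the base points. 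By the standard smooth dependence of the solutions of \eqref{eq:diffeq} on the parameter $s$ and on the initial data (legitimate since the maps are $C^2$ and the series \eqref{eq:defofbundle} converges geometrically by \ref{it:condLY2}), the functions $w^{\jjj}:=\partial_s y_\ttt(\iii,\pi_\ttt(\jjj),\,\cdot\,)$ and $w^{\hhh}:=\partial_s y_\ttt(\iii,\pi_\ttt(\hhh),\,\cdot\,)$ both solve the \emph{same} linear equation, obtained by differentiating \eqref{eq:diffeq} in $s$,
\begin{equation*}
  w'(x)=(u_\ttt)'_y(\iii,x,y(x))\,w(x)+(\partial_s u_\ttt)(\iii,x,y(x)),
\end{equation*}
with initial conditions $w^{\jjj}(\pi^1(\jjj))=\partial_s\pi^2_\ttt(\jjj)$ and $w^{\hhh}(\pi^1(\hhh))=\partial_s\pi^2_\ttt(\hhh)$, the initial abscissae $\pi^1(\jjj)$ and $\pi^1(\hhh)$ being independent of $\www$. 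Writing $P(x,a)=\exp\bigl(\int_a^x(u_\ttt)'_y(\iii,z,y(z))\dd z\bigr)$, variation of constants then gives
\begin{align*}
  w^{\jjj}(x)-w^{\hhh}(x) &= P(x,\pi^1(\jjj))\,\partial_s\pi^2_\ttt(\jjj)-P(x,\pi^1(\hhh))\,\partial_s\pi^2_\ttt(\hhh) \\
  &\qquad\qquad + \int_{\pi^1(\jjj)}^{\pi^1(\hhh)}P(x,t)\,(\partial_s u_\ttt)(\iii,t,y(t))\dd t.
\end{align*}

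I would then record three sign facts, all consequences of \ref{it:1}--\ref{it:2}. First, $(u_\ttt)'_y\geq0$ everywhere: in the formula \eqref{eq:uposy} both sums are products of the nonnegative quantities $-(g_i)''_{xy}$, $(g_i)'_x$, $(g_i)''_{yy}$ and the positive quantities $(f_i)'$, $(g_i)'_y$. Hence $P>0$, the map $a\mapsto P(x,a)$ is nonincreasing, so $P(x,\pi^1(\jjj))\geq P(x,\pi^1(\hhh))$, and moreover $P(x,\pi^1(\hhh))\geq e^{-C}$ with $C$ the constant of \eqref{eq:ubounded} and $x,\pi^1(\hhh)\in[0,1]$. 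Second, differentiating $\pi^2_\ttt(\jjj)=g_{j_1}(\pi_\ttt(\sigma\jjj))+t_{j_1,2}$ in $s$, iterating, and using $\partial_s\pi^1(\,\cdot\,)=0$, one obtains that $\partial_s\pi^2_\ttt(\jjj)$ is a series of nonnegative terms with leading term $1$ and the remaining terms bounded by $\rho^m$, $m\geq1$; thus $1\leq\partial_s\pi^2_\ttt(\jjj)\leq(1-\rho)^{-1}$, whereas for $\hhh$ the leading $1$ is absent because $h_1\neq j_1$, so $0\leq\partial_s\pi^2_\ttt(\hhh)\leq\rho(1-\rho)^{-1}$, and therefore $\partial_s\pi^2_\ttt(\jjj)-\partial_s\pi^2_\ttt(\hhh)\geq(1-2\rho)(1-\rho)^{-1}>0$ since $\rho<\tfrac12$. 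Third, differentiating \eqref{eq:defofbundle} term by term, the $k$-th term is $-(g_{i_k})'_x(\mathbf{p}_{k-1})(f_{\overleftarrow{\iii|_{k-1}}})'(x)\bigl(\prod_{\ell=1}^{k}(g_{i_\ell})'_y(\mathbf{p}_{\ell-1})\bigr)^{-1}$ with $\mathbf{p}_\ell=F^\ttt_{\overleftarrow{\iii|_\ell}}(x,y)$ (superscripts $(1),(2)$ below denote the coordinates); since $\partial_s\mathbf{p}_\ell^{(1)}=0$ and $\partial_s\mathbf{p}_\ell^{(2)}\geq0$ (induction on $\ell$), the identities $\partial_s[(g_{i_k})'_x(\mathbf{p}_{k-1})]=(g_{i_k})''_{xy}(\mathbf{p}_{k-1})\,\partial_s\mathbf{p}_{k-1}^{(2)}\leq0$ and $\partial_s[(g_{i_\ell})'_y(\mathbf{p}_{\ell-1})]=(g_{i_\ell})''_{yy}(\mathbf{p}_{\ell-1})\,\partial_s\mathbf{p}_{\ell-1}^{(2)}\geq0$ together with the quotient rule make the $s$-derivative of each term nonnegative, whence $(\partial_s u_\ttt)(\iii,x,y)\geq0$.

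Combining the three facts with the displayed identity, the integral term is nonnegative, and the first two terms are bounded below by $P(x,\pi^1(\hhh))\bigl(\partial_s\pi^2_\ttt(\jjj)-\partial_s\pi^2_\ttt(\hhh)\bigr)\geq e^{-C}(1-2\rho)(1-\rho)^{-1}$; hence
\begin{equation*}
  \frac{\partial}{\partial t_{j_1,2}}\bigl(y_\ttt(\iii,\pi_\ttt(\jjj),x)-y_\ttt(\iii,\pi_\ttt(\hhh),x)\bigr)=w^{\jjj}(x)-w^{\hhh}(x)\geq e^{-C}\,\frac{1-2\rho}{1-\rho}
\end{equation*}
for all $x\in[0,1]$ and all $\ttt=(\vvv_0,\www)\in V\times W$, which is the assertion with $\delta=\tfrac12 e^{-C}(1-2\rho)(1-\rho)^{-1}$. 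Together with Lemma~\ref{lem:dist}, which shows that the two leaves coincide if and only if $\proj_\iii^\www(\jjj)=\proj_\iii^\www(\hhh)$, and with Lemma~\ref{lem:techtrans}, this then yields the transversality condition \ref{eq:cond5}.

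The step I expect to be the main obstacle is the third sign fact: one must keep the bookkeeping of the chain of $s$-derivatives through the infinite composition under control and justify the term-by-term differentiation, which rests on the geometric decay in \eqref{eq:defofbundle} guaranteed by \ref{it:condLY2} together with the uniform bound $\partial_s\mathbf{p}_\ell^{(2)}\leq(1-\rho)^{-1}$. A secondary point worth stressing is that $w^{\jjj}$ and $w^{\hhh}$ satisfy the \emph{same} linear ODE with the \emph{same} source term — this is exactly where the hypothesis that the two leaves coincide is used, and it is what lets the factors $P(x,\,\cdot\,)$ cancel and reduces the estimate to the explicit difference of initial data plus a nonnegative correction. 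It is also where $\rho<\tfrac12$ enters: that gap is precisely what keeps $\partial_s\pi^2_\ttt(\jjj)-\partial_s\pi^2_\ttt(\hhh)$ bounded away from zero.
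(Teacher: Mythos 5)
Your proposal is correct and follows essentially the same route as the paper: variation of constants for the $t_{j_1,2}$-derivative of the leaf (the paper's \eqref{eq:solution}), the sign facts $(u_\ttt)'_y\ge 0$ and $\partial_{t_{j,2}}u_\ttt\ge 0$ (Lemma~\ref{lem:posu}) together with \eqref{eq:usualtrans}, and the same lower bound $e^{-C}(1-2\rho)/(1-\rho)$. The only cosmetic difference is that the paper converts the difference of the two exponential prefactors into an extra nonnegative integral term via the identity for $\int P(x,w)(u_\ttt)'_y\,\dd w$, whereas you invoke the monotonicity of $a\mapsto P(x,a)$ directly; these are the same estimate.
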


Indeed, the transversality condition \ref{eq:cond5} follows from Proposition~\ref{lem:basictrans} by applying Lemma~\ref{lem:techtrans} for the map $f(\ttt,(\jjj,\hhh,x))=y_\ttt(\iii,\pi_\ttt(\jjj),x)-y_\ttt(\iii,\pi_\ttt(\hhh),x)$ together with the fact given by Lemma~\ref{lem:dist} that there exists a constant $C>0$ such that
$$
  \mathcal{L}^{2N}(\{\ttt:|\proj_\iii^{\ttt}(\jjj)-\proj_\iii^{\ttt}(\hhh)|\leq r\})\leq\mathcal{L}^{2N}(\{\ttt:|y_{\ttt}(\iii,\pi_\ttt(\jjj),x)-y_{\ttt}(\iii,\pi_\ttt(\hhh),x)|\leq Cr\})
$$
for all $x\in[0,1]$, $\iii\in\Sigma$, and $\jjj,\hhh\in\Sigma$ with $j_1\neq h_1$.

Let us begin preparations for the proof of Proposition \ref{lem:basictrans}. At first, we observe that
\begin{equation}\label{eq:finiteder}
\frac{\partial}{\partial t_{j,2}}g_{\iii}^{\ttt}(x,y)\geq0
\end{equation}
for all $\iii\in\Sigma_*$, $j \in \{1,\ldots,N\}$, and $\ttt\in V\times W$. In particular,
\begin{equation}\label{eq:natprojder}
	0\leq\frac{\partial}{\partial t_{j,2}}\pi_\ttt^2(\jjj)\leq\frac{1}{1-\rho}
\end{equation}
for all $\jjj\in\Sigma$, $j \in \{1,\ldots,N\}$, and $\ttt\in V\times W$. Indeed,
\begin{equation}\label{eq:transinter}
  \frac{\partial}{\partial t_{j,2}}\pi_\ttt^2(\jjj)=\delta_{j_1}^j+(g_{j_1})'_y(\pi_\ttt(\sigma\jjj))\frac{\partial}{\partial t_{j,2}}\pi_\ttt^2(\sigma\jjj)=\sum_{k=1}^\infty\delta_{j_k}^j(g_{\jjj|_{k-1}}^\ttt)'_y(\pi_\ttt(\sigma^k\jjj)),
\end{equation}
where $\delta_i^j=1$ if $i=j$ and $0$ otherwise. Hence, \eqref{eq:natprojder} follows from \ref{it:1}. Similarly, \eqref{eq:finiteder} follows from \ref{it:1} and \eqref{eq:transinter} but with considering finite sums. Furthermore, it follows from \eqref{eq:transinter} that for every $\jjj,\hhh\in\Sigma$ with $h_1\neq j_1$
\begin{equation}\label{eq:usualtrans}
\frac{\partial}{\partial t_{j_1,2}}(\pi_\ttt^2(\jjj)-\pi_\ttt^2(\hhh))\geq1-\frac{\rho}{1-\rho}>0
\end{equation}
for all $\ttt \in V\times W$.

\begin{lemma}\label{lem:posu}
  Suppose that $\Phi^\ttt$ satisfies the assumptions \ref{it:1} and \ref{it:2}. Then
  \begin{equation*} %\label{eq:upost}
    \frac{\partial}{\partial t_{j,2}}u_\ttt(\iii,x,y) \ge 0 \qquad\text{and}\qquad (u_\ttt)'_y(\iii,x,y) \ge 0
  \end{equation*}
  for all $\iii\in\Sigma$, $j \in \{1,\ldots,N\}$, $(x,y)\in[0,1]^2$, and $\ttt=(\vvv_0,\www)\in V\times W$. 
\end{lemma}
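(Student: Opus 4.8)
The plan is to establish the two inequalities separately; the bound on $(u_\ttt)'_y$ is immediate from the explicit series \eqref{eq:uposy}, while the monotonicity of $u_\ttt$ in $t_{j,2}$ requires differentiating the series \eqref{eq:defofbundle} term by term.

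First I would prove that $(u_\ttt)'_y(\iii,x,y)\ge0$. Under \ref{it:1} every factor $f'_{\overleftarrow{\iii|_{k-1}}}(x)$ and $(g^\ttt_{\overleftarrow{\iii|_k}})'_y(x,y)$ occurring in \eqref{eq:uposy} is strictly positive, and under \ref{it:2} we have $(g_i)''_{xy}\le0$, $(g_i)'_x\ge0$ and $(g_i)''_{yy}\ge0$ for all $i$. Feeding these signs into \eqref{eq:uposy}: in the first group of summands the quantity $-(g^\www_{i_k})''_{xy}(\cdot)$ is nonnegative and all the remaining factors are positive, while in the double sum the factors $(g^\www_{i_k})'_x(\cdot)$ and $(g^\www_{i_\ell})''_{yy}(\cdot)$ are nonnegative and everything else is positive. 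Hence every term in \eqref{eq:uposy} is nonnegative, which gives $(u_\ttt)'_y(\iii,x,y)\ge0$.

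For the first inequality I would differentiate \eqref{eq:defofbundle} in $t_{j,2}$. Since $\vvv=\vvv_0$ is fixed, the only dependence on $t_{j,2}$ in the $k$-th term of \eqref{eq:defofbundle} enters through the second coordinate $g^\ttt_{\overleftarrow{\iii|_{k-1}}}(x,y)$ of the base point $F^\ttt_{\overleftarrow{\iii|_{k-1}}}(x,y)$ and through the factors $(g_{i_m})'_y$ forming $(g^\ttt_{\overleftarrow{\iii|_k}})'_y(x,y)$; the first coordinate $f^{\vvv_0}_{\overleftarrow{\iii|_{k-1}}}(x)$ and the derivative $f'_{\overleftarrow{\iii|_{k-1}}}(x)$ are $\www$-independent. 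Write the $k$-th term as $-A_kB_k/C_k$, where $A_k=(g_{i_k})'_x(F^\ttt_{\overleftarrow{\iii|_{k-1}}}(x,y))$, $B_k=f'_{\overleftarrow{\iii|_{k-1}}}(x)$ and $C_k=(g^\ttt_{\overleftarrow{\iii|_k}})'_y(x,y)$. By \ref{it:2}, $A_k\ge0$; by \ref{it:1}, $B_k>0$ is $\www$-independent and $C_k>0$. Applying \eqref{eq:finiteder} to the word $\overleftarrow{\iii|_{k-1}}\in\Sigma_*$ gives $\frac{\partial}{\partial t_{j,2}}g^\ttt_{\overleftarrow{\iii|_{k-1}}}(x,y)\ge0$, so
$$\frac{\partial A_k}{\partial t_{j,2}}=(g_{i_k})''_{xy}\bigl(F^\ttt_{\overleftarrow{\iii|_{k-1}}}(x,y)\bigr)\cdot\frac{\partial}{\partial t_{j,2}}g^\ttt_{\overleftarrow{\iii|_{k-1}}}(x,y)\le0$$
because $(g_{i_k})''_{xy}\le0$. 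Writing $C_k=\prod_{m=1}^k(g_{i_m})'_y(F^\ttt_{\overleftarrow{\iii|_{m-1}}}(x,y))$ and differentiating the product, each resulting summand carries a factor $(g_{i_m})''_{yy}(\cdot)\ge0$ multiplied by $\frac{\partial}{\partial t_{j,2}}g^\ttt_{\overleftarrow{\iii|_{m-1}}}(x,y)\ge0$ together with positive factors, so $\frac{\partial C_k}{\partial t_{j,2}}\ge0$. Consequently
$$\frac{\partial}{\partial t_{j,2}}\Bigl(-\frac{A_kB_k}{C_k}\Bigr)=\frac{B_k}{C_k^2}\Bigl(A_k\,\frac{\partial C_k}{\partial t_{j,2}}-C_k\,\frac{\partial A_k}{\partial t_{j,2}}\Bigr)\ge0,$$
since $A_k\ge0$, $\partial_{t_{j,2}}C_k\ge0$, $C_k>0$ and $\partial_{t_{j,2}}A_k\le0$. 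Summing over $k$ yields $\frac{\partial}{\partial t_{j,2}}u_\ttt(\iii,x,y)\ge0$.

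It remains to justify the term-by-term differentiation, and this is the only genuinely delicate point. As in the derivation of \eqref{eq:uposy} and the bound \eqref{eq:ubounded}, assumptions \ref{it:1} and \ref{it:2} together with the uniform bound $|\frac{\partial}{\partial t_{j,2}}g^\ttt_\jjj(x,y)|\le(1-\rho)^{-1}$ — which follows from the finite-sum version of \eqref{eq:transinter} since each $(g^\ttt_{\jjj|_{k-1}})'_y$ is bounded by $\rho^{k-1}$ — provide a summable geometric majorant for the series of $t_{j,2}$-derivatives, uniformly in $\iii$, $(x,y)$ and $\ttt$. Hence differentiation and summation may be interchanged, and once this is in hand the proof reduces to the sign bookkeeping above.
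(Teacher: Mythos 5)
Your proof is correct and follows essentially the same route as the paper: the paper's formula \eqref{eq:derofu} is exactly your quotient-rule expression $\sum_k B_kC_k^{-2}(A_k\partial_{t_{j,2}}C_k-C_k\partial_{t_{j,2}}A_k)$ after writing $\partial_{t_{j,2}}C_k/C_k$ as the logarithmic derivative of the product, and the sign bookkeeping via \ref{it:2} and \eqref{eq:finiteder} is identical, as is the sign check of \eqref{eq:uposy} for the second inequality. Your explicit justification of the term-by-term differentiation is a welcome addition that the paper leaves implicit.
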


\begin{proof}
Simple calculations show that
\begin{align}
		\frac{\partial}{\partial t_{j,2}}u_\ttt(\iii,x,y) &= \sum_{k=1}^\infty\biggl(\frac{-(g_{i_k})_{xy}''(F_{\overleftarrow{\iii|_{k-1}}}^\ttt(x,y))f_{\overleftarrow{\iii|_{k-1}}}'(x)}{(g_{\overleftarrow{\iii|_{k}}}^\ttt)_y'(x,y)}\frac{\partial}{\partial t_{j,2}}g_{\overleftarrow{\iii|_{k-1}}}^\ttt(x,y) \label{eq:derofu} \\
		&+\sum_{\ell=1}^k\frac{(g_{i_k})_x'(F_{\overleftarrow{\iii|_{k-1}}}^\ttt(x,y))f_{\overleftarrow{\iii|_{k-1}}}'(x)}{(g_{\overleftarrow{\iii|_{k}}}^\ttt)_y'(x,y)}\frac{(g_{i_\ell})_{yy}''(F_{\overleftarrow{\iii|_{\ell-1}}}^\ttt(x,y))}{(g_{i_\ell})_{y}'(F_{\overleftarrow{\iii|_{\ell-1}}}^\ttt(x,y))}\frac{\partial}{\partial t_{j,2}}g_{\overleftarrow{\iii|_{\ell-1}}}^\ttt(x,y)\biggr), \notag
\end{align}
where the series converges by \ref{it:1}. Hence, the first claim follows from \ref{it:2} and \eqref{eq:finiteder}. The second claim follows from \eqref{eq:uposy} by a similar manner.
\end{proof}

Let us next write the ordinary differential equation \eqref{eq:diffeq} in the integral equation form. That is, for every $\iii,\jjj\in\Sigma$, $\ttt\in V\times W$, and $x\in[0,1]$, we have
$$
y_\ttt(\iii,\pi_\ttt(\jjj),x)=\pi_\ttt^2(\jjj)+\int_{\pi^1(\jjj)}^xu_\ttt(\iii,z,y_\ttt(\iii,\pi_\ttt(\jjj),z))\dd z.
$$
Thus,
\begin{align*}
	\frac{\partial}{\partial t_{h,2}}y_\ttt(\iii,\pi_\ttt(\jjj),x)&=\frac{\partial}{\partial t_{h,2}}\pi_\ttt^2(\jjj)+
	\int_{\pi^1(\jjj)}^x\biggl(\frac{\partial}{\partial t_{h,2}}u_\ttt\biggr)(\iii,z,y_\ttt(\iii,\pi_\ttt(\jjj),z))\\ 
  &\qquad\qquad+(u_\ttt)'_y(\iii,z,y_\ttt(\iii,\pi_\ttt(\jjj),z))\frac{\partial}{\partial t_{h,2}}y_\ttt(\iii,\pi_\ttt(\jjj),z)\dd z
\end{align*}
for all $h\in\{1,\ldots,N\}$. To simplify notation, we write $y(x)=y_\ttt(\iii,\pi_\ttt(\jjj),x)$. Recalling how to solve linear nonhomogeneous ordinary differential equations (see, for example, \cite[Section~2.3]{ODE}), we can write
\begin{equation}\label{eq:solution}
\begin{split}
	\frac{\partial}{\partial t_{h,2}}y_\ttt(\iii,&\pi_\ttt(\jjj),x)=\frac{\partial}{\partial t_{h,2}}\pi_\ttt^2(\jjj)\exp\biggl(\int_{\pi^1(\jjj)}^x(u_\ttt)'_y(\iii,z,y(z))\dd z\biggr)\\ 
  &+\int_{\pi^1(\jjj)}^x\exp\biggl(\int_w^x(u_\ttt)'_y(\iii,z,y(z))\dd z\biggr)\biggl(\frac{\partial}{\partial t_{h,2}}u_\ttt\biggr)(\iii,w,y(w))\dd w.
\end{split}
\end{equation}
We are now ready to prove Proposition \ref{lem:basictrans}.

\begin{proof}[Proof of Proposition~\ref{lem:basictrans}]
To simplify notation, write $y(x)=y_\ttt(\iii,\pi_\ttt(\jjj),x)\equiv y_\ttt(\iii,\pi_\ttt(\hhh),x)$. Applying \eqref{eq:solution} for both $\frac{\partial}{\partial t_{j_1,2}}y_\ttt(\iii,\pi_\ttt(\jjj),x)$ and $\frac{\partial}{\partial t_{j_1,2}}y_\ttt(\iii,\pi_\ttt(\hhh),x)$, we get
\begin{align*}
	\frac{\partial}{\partial t_{j_1,2}}&y_\ttt(\iii,\pi_\ttt(\jjj),x)-\frac{\partial}{\partial t_{j_1,2}}y_\ttt(\iii,\pi_\ttt(\hhh),x)\\
	&=\frac{\partial}{\partial t_{j_1,2}}\pi_\ttt^2(\jjj)\exp\biggl(\int_{\pi^1(\jjj)}^x(u_\ttt)'_y(\iii,z,y(z))\dd z\biggr)\\ 
  &\qquad\qquad-\frac{\partial}{\partial t_{j_1,2}}\pi_\ttt^2(\hhh)\exp\biggl(\int_{\pi^1(\hhh)}^x(u_\ttt)'_y(\iii,z,y(z))\dd z\biggr)\\
	&\qquad\qquad+\int_{\pi^1(\jjj)}^{\pi^1(\hhh)}\exp\biggl(\int_w^x(u_\ttt)'_y(\iii,z,y(z))\dd z\biggr)\biggl(\frac{\partial}{\partial t_{j_1,2}}u_\ttt\biggr)(\iii,w,y(w))\dd w.
\end{align*}
Since
\begin{align*}
  -\int_{\pi^1(\jjj)}^{\pi^1(\hhh)}&\exp\biggl(\int_w^x(u_\ttt)'_y(\iii,z,y(z))\dd z\biggr)(u_\ttt)'_y(\iii,w,y(w))\dd w\\ 
  &=\exp\biggl(\int_{\pi^1(\hhh)}^x(u_\ttt)'_y(\iii,z,y(z))\dd z\biggr)-\exp\biggl(\int_{\pi^1(\jjj)}^x(u_\ttt)'_y(\iii,z,y(z))\dd z\biggr),
\end{align*}
we get
\begin{align*}
	\frac{\partial}{\partial t_{j_1,2}}&y_\ttt(\iii,\pi_\ttt(\jjj),x)-\frac{\partial}{\partial t_{j_1,2}}y_\ttt(\iii,\pi_\ttt(\hhh),x)\\
	&=\biggl(\frac{\partial}{\partial t_{j_1,2}}\pi_\ttt^2(\jjj)-\frac{\partial}{\partial t_{j_1,2}}\pi_\ttt^2(\hhh)\biggr)\exp\biggl(\int_{\pi^1(\hhh)}^x(u_\ttt)'_y(\iii,z,y(z))\dd z\biggr)\\
	&\qquad\qquad+\int_{\pi^1(\jjj)}^{\pi^1(\hhh)}\exp\biggl(\int_w^x(u_\ttt)'_y(\iii,z,y(z))\dd z\biggr)\biggl(\biggl(\frac{\partial}{\partial t_{j_1,2}}u_\ttt\biggr)(\iii,w,y(w))\\ 
  &\qquad\qquad\qquad\qquad\qquad\qquad\qquad\qquad+\frac{\partial}{\partial t_{j_1,2}}\pi_\ttt^2(\jjj)(u_\ttt)'_y(\iii,w,y(w))\biggr)\dd w.
\end{align*}
Therefore, by \eqref{eq:ubounded}, \eqref{eq:usualtrans}, and Lemma~\ref{lem:posu}, there exists $C\geq0$ such that
$$
\frac{\partial}{\partial t_{j_1,2}}y_\ttt(\iii,\pi_\ttt(\jjj),x)-\frac{\partial}{\partial t_{j_1,2}}y_\ttt(\iii,\pi_\ttt(\hhh),x)\geq\biggl(1-\frac{\rho}{1-\rho}\biggr)e^{-C}>0
$$
which is what we wanted to prove.
\end{proof}

\subsection{The second example}

In this subsection, we assume that $\Phi^\ttt$ satisfies the assumptions \ref{it:1b}--\ref{it:3b} and prove the transversality condition \ref{eq:cond5}. As with the first example, we will see that it follows from the following proposition.

\begin{proposition}\label{lem:basictransv2}
  Suppose that $\Phi^\ttt$ satisfies the assumptions \ref{it:1b}--\ref{it:3b}. Then there exists $\delta>0$ such that for every $\iii,\jjj,\hhh\in\Sigma$ with $j_1\neq h_1$ and $\pi^1(\jjj)\leq\pi^1(\hhh)$ the following holds: if $y_\ttt(\iii,\pi_\ttt(\jjj),x)\equiv y_\ttt(\iii,\pi_\ttt(\hhh),x)$ as a function of $x$, then there exists $k\in\{1,\ldots,N\}$ such that
	$$
	\biggl|\frac{\partial}{\partial t_{k,2}}(y_\ttt(\iii,\pi_\ttt(\jjj),x)-y_\ttt(\iii,\pi_\ttt(\hhh),x))\biggr|>\delta,
	$$
	where $x=(\pi^1(\hhh)+\pi^1(\jjj))/2$.
\end{proposition}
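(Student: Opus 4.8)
The plan is to prove Proposition~\ref{lem:basictransv2} and then deduce the transversality condition \ref{eq:cond5} from it, exactly as was done after Proposition~\ref{lem:basictrans}: one applies Lemma~\ref{lem:techtrans} (whose proof in fact only needs a lower bound on $|\partial_{t_i}f|$, so the conclusion of the proposition is precisely what is needed) to the map $f(\ttt,(\iii,\jjj,\hhh))=y_\ttt(\iii,\pi_\ttt(\jjj),x)-y_\ttt(\iii,\pi_\ttt(\hhh),x)$ with $x=(\pi^1(\jjj)+\pi^1(\hhh))/2$, together with Lemma~\ref{lem:dist}; by uniqueness of the solution of \eqref{eq:diffeq} the two curves coincide as soon as they agree at $x$, so the hypothesis of the proposition is exactly $f(\ttt,\cdot)=0$. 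Fixing $\iii,\jjj,\hhh$ with $j_1\ne h_1$ and $a:=\pi^1(\jjj)\le\pi^1(\hhh)=:b$, assuming the common curve $y(\cdot):=y_\ttt(\iii,\pi_\ttt(\jjj),\cdot)\equiv y_\ttt(\iii,\pi_\ttt(\hhh),\cdot)$ exists, and writing $x^*=(a+b)/2$, $\Delta=b-a\in[0,1]$ and $D_k(x)=\partial_{t_{k,2}}\bigl(y_\ttt(\iii,\pi_\ttt(\jjj),x)-y_\ttt(\iii,\pi_\ttt(\hhh),x)\bigr)$, the goal is a uniform lower bound $\max\{|D_{j_1}(x^*)|,|D_{h_1}(x^*)|\}>\delta$.

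First I would record the representation. Exactly as in the proof of Proposition~\ref{lem:basictrans}, which uses only the variation-of-constants formula \eqref{eq:solution} and no sign hypotheses, one obtains for every $k$
\[
D_k(x)=(p_{k,\jjj}-p_{k,\hhh})\,E(b,x)+\int_a^bE(w,x)\Bigl((\partial_{t_{k,2}}u_\ttt)(\iii,w,y(w))+p_{k,\jjj}(u_\ttt)'_y(\iii,w,y(w))\Bigr)\dd w,
\]
where $p_{k,\jjj}=\partial_{t_{k,2}}\pi_\ttt^2(\jjj)$, $p_{k,\hhh}=\partial_{t_{k,2}}\pi_\ttt^2(\hhh)$ and $E(\alpha,\beta)=\exp\bigl(\int_\alpha^\beta(u_\ttt)'_y(\iii,z,y(z))\dd z\bigr)$. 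Since $P_k:=\partial_{t_{k,2}}y_\ttt(\iii,\pi_\ttt(\jjj),\cdot)$ and $\partial_{t_{k,2}}y_\ttt(\iii,\pi_\ttt(\hhh),\cdot)$ solve the same inhomogeneous linear equation, $D_k$ solves the homogeneous one, whence $D_k(x)=D_k(x^*)E(x^*,x)$; in particular $|D_k(x^*)|\ge e^{-\kappa_1}|D_k(x')|$ for all $x'\in[0,1]$, where $\kappa_1$ is any upper bound for $|(u_\ttt)'_y|$.

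Second, I would collect the quantitative input. Summing the explicit series \eqref{eq:uposy} and \eqref{eq:derofu}, using \ref{it:1b} in the form $|f_i'|<\tfrac14|(g_i)'_y|$ (so that $|f'_{\overleftarrow{\iii|_n}}|<4^{-n}|(g^\ttt_{\overleftarrow{\iii|_n}})'_y|$) together with $\rho<\tfrac14$ and the ratio bounds \ref{it:2b}, the resulting sums (geometric, or of the type $\sum_n n x^{n}$) yield explicit constants with $|(u_\ttt)'_y(\iii,\cdot,\cdot)|\le\kappa_1<1$ and $|(\partial_{t_{k,2}}u_\ttt)(\iii,\cdot,\cdot)|\le\kappa_2$, hence $e^{-\kappa_1|\beta-\alpha|}\le E(\alpha,\beta)\le e^{\kappa_1|\beta-\alpha|}$. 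From \eqref{eq:transinter} and $|(g^\ttt_{\jjj|_{m-1}})'_y|\le\rho^{m-1}$ one gets $|p_{j_1,\jjj}-1|\le\rho/(1-\rho)<\tfrac13$ and $|p_{k,\jjj}|<\tfrac13$ for $k\ne j_1$ (and likewise for $\hhh$), so that $p_{j_1,\jjj}-p_{j_1,\hhh}\ge\tfrac13$ and $p_{h_1,\hhh}-p_{h_1,\jjj}\ge\tfrac13$.

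The main step, and where I expect the real work, is the lower bound at the midpoint. Evaluating at $x=x^*$ is what forces $|w-x^*|\le\Delta/2$ for every $w\in[a,b]$, so that each transport factor $E(\cdot,x^*)$ lies in $[e^{-\kappa_1\Delta/2},e^{\kappa_1\Delta/2}]$, halving the worst-case distortion relative to an endpoint. If $\Delta$ is below a threshold depending only on $\kappa_1,\kappa_2$, the principal term of $D_{j_1}(x^*)$, of size at least $(p_{j_1,\jjj}-p_{j_1,\hhh})e^{-\kappa_1\Delta/2}\ge\tfrac13e^{-\kappa_1\Delta/2}$, dominates the integral remainder (which is $\le\Delta e^{\kappa_1\Delta/2}(\kappa_2+\tfrac43\kappa_1)$), and $|D_{j_1}(x^*)|>\delta$. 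For $\Delta$ above the threshold this crude estimate no longer closes, and the difficulty is to win by playing the two candidates $k=j_1$ and $k=h_1$ against each other: using $D_{j_1}(x^*)=D_{j_1}(b)E(b,x^*)$ and $D_{h_1}(x^*)=D_{h_1}(a)E(a,x^*)$ with $D_{j_1}(b)=p_{j_1,\jjj}E(a,b)-p_{j_1,\hhh}+R_{j_1}$ and $D_{h_1}(a)=p_{h_1,\jjj}-p_{h_1,\hhh}E(a,b)^{-1}+R_{h_1}$, and controlling the foliation-drift remainders $R_{j_1},R_{h_1}$ more carefully — e.g.\ via \eqref{eq:derofu} and the identity $p_{k,\hhh}-p_{k,\jjj}=\int_a^b\bigl((\partial_{t_{k,2}}u_\ttt)(\iii,w,y(w))+(u_\ttt)'_y(\iii,w,y(w))P_k(w)\bigr)\dd w$ obtained by differentiating $\pi_\ttt^2(\hhh)-\pi_\ttt^2(\jjj)=\int_a^bu_\ttt(\iii,w,y(w))\dd w$ in $t_{k,2}$ — so that whichever of $E(a,b)$, $E(a,b)^{-1}$ is the larger forces the corresponding $|D_{j_1}(b)|$ or $|D_{h_1}(a)|$, hence $|D_{j_1}(x^*)|$ or $|D_{h_1}(x^*)|$, to be bounded below. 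Making these inequalities close is exactly what the numerical constants in \ref{it:1b} and \ref{it:2b} are calibrated for; granting this, Proposition~\ref{lem:basictransv2}, and with it \ref{eq:cond5}, follows.
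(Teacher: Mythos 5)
Your setup is correct and matches the paper's: the variation-of-constants formula \eqref{eq:solution}, the midpoint evaluation, and the need for explicit bounds on $(u_\ttt)'_y$ and on the $t_{k,2}$-derivatives of $u_\ttt$ (the paper's Lemma~\ref{lem:uyandut} gives $|(u_\ttt)'_y|\le\frac{112}{135}$). You also correctly diagnose exactly where the single-index estimate breaks: for $k=j_1$ alone the boundary contribution is $p_{j_1,\jjj}E(\pi^1(\jjj),x)-p_{j_1,\hhh}E(\pi^1(\hhh),x)$, and since $|p_{j_1,\hhh}|$ can be as large as $\rho/(1-\rho)$ with the exponential factor working against you, the main term need not dominate when $\pi^1(\hhh)-\pi^1(\jjj)$ is large. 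But at that point your argument stops: the ``play the two candidates against each other'' step is described only as a strategy (transport to the endpoints, an identity for $p_{k,\hhh}-p_{k,\jjj}$), you do not carry it out, and you concede the conclusion with ``granting this''. That is the entire content of the proposition, so this is a genuine gap.

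The idea you are missing is to estimate the single scalar quantity $D_{j_1}(x)-D_{h_1}(x)$ rather than $D_{j_1}$ and $D_{h_1}$ separately (a lower bound on $|D_{j_1}-D_{h_1}|$ of course forces one of $|D_{j_1}|,|D_{h_1}|$ to exceed half of it, which is the stated conclusion with $k=j_1$ or $k=h_1$). This one linear combination fixes both problems at once. First, the boundary terms become
\begin{equation*}
(p_{j_1,\jjj}-p_{h_1,\jjj})\,E(\pi^1(\jjj),x)+(p_{h_1,\hhh}-p_{j_1,\hhh})\,E(\pi^1(\hhh),x),
\end{equation*}
and by \eqref{eq:usualtrans} \emph{both} coefficients are bounded below by $\tfrac23$, so the two exponentials enter with positive weights and no cancellation is possible; the case analysis in $\Delta$ disappears. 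Second, the integrand becomes $(\partial_{t_{j_1,2}}u_\ttt-\partial_{t_{h_1,2}}u_\ttt)(\iii,w,y(w))$, for which the first terms of the two series in \eqref{eq:derofu} cancel and \ref{it:2b} gives the bound $\tfrac{28}{81}$ of \eqref{eq:bound2b}; this is what makes the resulting elementary inequality
\begin{equation*}
\tfrac43e^{-\frac{112}{135}z}-\tfrac56\bigl(e^{\frac{224}{135}z}+1-2e^{-\frac{112}{135}z}\bigr)\cdot e^{\frac{112}{135}z}>0,\qquad z=\tfrac{\pi^1(\hhh)-\pi^1(\jjj)}{2}\in[0,\tfrac12],
\end{equation*}
close for the full range of $z$ -- and the midpoint choice of $x$ is needed precisely so that $z\le\tfrac12$. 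Without passing to the difference of the two partial derivatives, the constants in \ref{it:1b}--\ref{it:2b} do not appear sufficient to make your endpoint-transport scheme work, and in any case you have not verified that they do.
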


Before we prove the proposition, we need some more delicate analysis on the bounds of the derivatives on the tangent bundle $u_\ttt(\iii,x,y)$.

\begin{lemma}\label{lem:uyandut}
  Under the assumptions of Proposition~\ref{lem:basictransv2},
	$$
	|(u_\ttt)'_y(\iii,x,y)|\leq\frac{112}{135}
	$$
  for all $\iii\in\Sigma$, $\ttt=(\vvv_0,\www)\in V\times W$, and $(x,y)\in[0,1]^2$. Furthermore,
	\begin{equation}\label{eq:bound2b}
		\biggl|\frac{\partial}{\partial t_{j,2}}u_\ttt(\iii,x,y)-\frac{\partial}{\partial t_{h,2}}u_\ttt(\iii,x,y)\biggr|\le\frac{28}{81}.
	\end{equation}
  for all $j,h\in\{1,\ldots,N\}$ with $j\neq h$.
\end{lemma}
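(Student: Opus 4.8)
The plan is to estimate the two series \eqref{eq:uposy} and \eqref{eq:derofu} summand by summand, using only four elementary consequences of the hypotheses: $|f_i'|\le\tfrac1{16}$ and $|f_i'/(g_i)_y'|\le\tfrac14$ from \ref{it:1b}, and $|(g_i)''_{xy}/(g_i)_y'|\le\tfrac13$, $|(g_i)'_x/(g_i)_y'|\cdot|(g_j)''_{yy}/(g_j)_y'|\le\tfrac13$ from \ref{it:2b}. The algebraic device is the chain rule in telescoping form: $(g_{\overleftarrow{\iii|_k}}^\ttt)_y'=(g_{i_k})_y'(F_{\overleftarrow{\iii|_{k-1}}}^\ttt)\cdot(g_{\overleftarrow{\iii|_{k-1}}}^\ttt)_y'$ and $(f_{\overleftarrow{\iii|_k}}^\vvv)'=f_{i_k}'(f_{\overleftarrow{\iii|_{k-1}}}^\vvv)\cdot(f_{\overleftarrow{\iii|_{k-1}}}^\vvv)'$. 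Substituting these into the summands makes the long iterated derivatives cancel against the denominators, leaving in each summand a product of single-step ratios times one ``global'' factor of the form $|(f_{\overleftarrow{\iii|_n}}^\vvv)'(x)|\le(1/16)^n$ or $|(f_{\overleftarrow{\iii|_n}}^\vvv)'(x)/(g_{\overleftarrow{\iii|_n}}^\ttt)_y'(x,y)|\le(1/4)^n$ (the first coordinate of $F_{\overleftarrow{\iii|_{m-1}}}^\ttt(x,y)$ being exactly $f_{\overleftarrow{\iii|_{m-1}}}^\vvv(x)$, so numerator and denominator of each single-step ratio are evaluated at matching points and \ref{it:1b}--\ref{it:2b} apply).

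Carrying this out for \eqref{eq:uposy}, the $k$-th summand of the first type reduces to $-\frac{(g_{i_k})''_{xy}}{(g_{i_k})_y'}(F_{\overleftarrow{\iii|_{k-1}}}^\ttt)\,(f_{\overleftarrow{\iii|_{k-1}}}^\vvv)'(x)$, of modulus at most $\tfrac13(1/16)^{k-1}$, which sums over $k\ge1$ to $\tfrac13\cdot\tfrac{16}{15}=\tfrac{16}{45}$; the $(k,\ell)$-th summand of the second type reduces to $\frac{(g_{i_k})'_x}{(g_{i_k})_y'}(F_{\overleftarrow{\iii|_{k-1}}}^\ttt)\cdot\bigl(\prod_{m=1}^{\ell-1}f_{i_m}'\bigr)\bigl(\prod_{m=\ell}^{k-1}\tfrac{f_{i_m}'}{(g_{i_m})_y'}\bigr)\cdot\frac{(g_{i_\ell})''_{yy}}{(g_{i_\ell})_y'}(F_{\overleftarrow{\iii|_{\ell-1}}}^\ttt)$, of modulus at most $\tfrac13(1/16)^{\ell-1}(1/4)^{k-\ell}$, summing over $\ell\ge1$, $k\ge\ell$ to $\tfrac13\cdot\tfrac{16}{15}\cdot\tfrac43=\tfrac{64}{135}$. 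Adding the two bounds gives $|(u_\ttt)'_y(\iii,x,y)|\le\tfrac{16}{45}+\tfrac{64}{135}=\tfrac{112}{135}$, which is the first assertion.

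For \eqref{eq:bound2b} I would first record, by the same chain-rule computation that underlies \eqref{eq:transinter} (which uses no sign hypotheses), that $\frac{\partial}{\partial t_{j,2}}g_{\overleftarrow{\iii|_n}}^\ttt(x,y)=\sum_{1\le l\le n,\ i_l=j}\prod_{m=l+1}^{n}(g_{i_m})_y'(F_{\overleftarrow{\iii|_{m-1}}}^\ttt(x,y))$, which vanishes for $n=0$. Since for $j\ne h$ the index sets $\{l\le n:i_l=j\}$ and $\{l\le n:i_l=h\}$ are disjoint and each product has modulus $<(1/4)^{n-l}$, this yields $\bigl|\frac{\partial}{\partial t_{j,2}}g_{\overleftarrow{\iii|_n}}^\ttt-\frac{\partial}{\partial t_{h,2}}g_{\overleftarrow{\iii|_n}}^\ttt\bigr|<\sum_{p\ge0}(1/4)^p=\tfrac43$, the same bound one gets for a single $\frac{\partial}{\partial t_{j,2}}g_{\overleftarrow{\iii|_n}}^\ttt$. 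Substituting $\frac{\partial}{\partial t_{j,2}}g^\ttt_{\overleftarrow{\iii|_\cdot}}-\frac{\partial}{\partial t_{h,2}}g^\ttt_{\overleftarrow{\iii|_\cdot}}$ into the series \eqref{eq:derofu} in place of $\frac{\partial}{\partial t_{j,2}}g^\ttt_{\overleftarrow{\iii|_\cdot}}$ (the series remaining absolutely convergent under \ref{it:1b}, since its derivation only needs the geometric convergence of \eqref{eq:defofbundle}) and performing the same cancellations and single-step estimates, the $k$-th summand of the first type is bounded by $\tfrac13\cdot(1/4)^{k-1}\cdot\tfrac43$ for $k\ge2$ and is $0$ for $k=1$, so this part sums to $\tfrac49\cdot\tfrac13=\tfrac4{27}$; the $(k,\ell)$-th summand of the second type is bounded by $\tfrac13\cdot(1/4)^{k-1}\cdot\tfrac43$ for $2\le\ell\le k$ and is $0$ when $\ell=1$, so summing over $2\le\ell\le k$ gives $\tfrac49\sum_{p\ge1}p(1/4)^p=\tfrac49\cdot\tfrac49=\tfrac{16}{81}$. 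Adding yields $\bigl|\frac{\partial}{\partial t_{j,2}}u_\ttt-\frac{\partial}{\partial t_{h,2}}u_\ttt\bigr|\le\tfrac4{27}+\tfrac{16}{81}=\tfrac{28}{81}$.

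The computations are essentially bookkeeping, so the point to watch is conceptual rather than technical: in contrast with the first example, the hypotheses \ref{it:1b}--\ref{it:2b} provide no positivity (the maps $g_i$ may reverse the $y$-orientation), so the monotonicity arguments of Lemma~\ref{lem:posu} are unavailable and one must work throughout with absolute values. The one place where a crude triangle inequality would be too lossy --- estimating $\frac{\partial}{\partial t_{j,2}}u_\ttt-\frac{\partial}{\partial t_{h,2}}u_\ttt$ by $\bigl|\frac{\partial}{\partial t_{j,2}}u_\ttt\bigr|+\bigl|\frac{\partial}{\partial t_{h,2}}u_\ttt\bigr|$, which would roughly double the constant to $\tfrac{56}{81}$ --- is circumvented precisely by the disjointness of the position sets $\{i_l=j\}$ and $\{i_l=h\}$, which keeps the bound on $\frac{\partial}{\partial t_{j,2}}g^\ttt-\frac{\partial}{\partial t_{h,2}}g^\ttt$ at $\tfrac1{1-\rho}$ rather than $\tfrac2{1-\rho}$; making sure this subtlety is respected at every occurrence of $\frac{\partial}{\partial t_{j,2}}g^\ttt$ inside \eqref{eq:derofu} is the main thing I would double-check.
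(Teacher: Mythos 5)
Your proof is correct and follows the same route as the paper: chain-rule telescoping to reduce each summand of \eqref{eq:uposy} and \eqref{eq:derofu} to products of single-step ratios that (B1)--(B2) control, the observation that the $k=1$ (resp.\ $\ell=1$) terms vanish, and the bound $\bigl|\frac{\partial}{\partial t_{j,2}}g_{\iii}^\ttt-\frac{\partial}{\partial t_{h,2}}g_{\iii}^\ttt\bigr|\le\frac43$ from the disjointness of the position sets $\{i_l=j\}$ and $\{i_l=h\}$, leading to the identical sums $\frac{16}{45}+\frac{64}{135}=\frac{112}{135}$ and $\frac{4}{27}+\frac{16}{81}=\frac{28}{81}$. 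You organize the double sum for \eqref{eq:bound2b} as $\sum_{k\ge2}\sum_{2\le\ell\le k}$ while the paper reindexes to $\sum_{k\ge2}\sum_{\ell\ge k}$, but this is cosmetic, and your explicit treatment of the disjointness argument fleshes out a point the paper only alludes to via \eqref{eq:transinter}.
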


\begin{proof}
	The first claim follows from \eqref{eq:uposy} and the assumptions, since
	$$
	|(u_\ttt)'_y(\iii,x,y)|\leq\sum_{k=1}^\infty\Bigl(\frac13\Bigl(\frac14\Bigr)^{k-1}\Bigl(\frac14\Bigr)^{k-1}+\sum_{\ell=1}^k\frac13\Bigl(\frac14\Bigr)^{k-1}\Bigl(\frac14\Bigr)^{\ell-1}\Bigr)=\frac{112}{135}.
	$$
	Let us show the second claim. By using \eqref{eq:transinter}, it is easy to see that
	$$
	\biggl|\frac{\partial}{\partial t_{j,2}}g_{\iii}^\ttt(x,y)-\frac{\partial}{\partial t_{h,2}}g_{\iii}^\ttt(x,y)\biggr|\leq\sum_{k=0}^{|\iii|-1}\Bigl(\frac14\Bigr)^{k}\leq\frac43
	$$
  for all $\iii\in\Sigma_*$. Observe that in both sums in \eqref{eq:derofu}, the first terms are zero, and so 
	\begin{equation*}%\label{eq:difference2}
		\begin{split}
			\frac{\partial}{\partial t_{j,2}}&u_\ttt(\iii,x,y)-\frac{\partial}{\partial t_{h,2}}u_\ttt(\iii,x,y)\\
			&=\sum_{k=2}^\infty\biggl(\frac{-(g_{i_k})_{xy}''(F_{\overleftarrow{\iii|_{k-1}}}^\ttt(x,y))f_{\overleftarrow{\iii|_{k-1}}}'(x)}{(g_{\overleftarrow{\iii|_{k}}}^\ttt)_y'(x,y)}\biggl(\frac{\partial}{\partial t_{j,2}}g_{\overleftarrow{\iii|_{k-1}}}^\ttt(x,y)-\frac{\partial}{\partial t_{h,2}}g_{\overleftarrow{\iii|_{k-1}}}^\ttt(x,y)\biggr)\\
			&\qquad\qquad+\sum_{\ell=2}^k\frac{(g_{i_k})_x'(F_{\overleftarrow{\iii|_{k-1}}}^\ttt(x,y))f_{\overleftarrow{\iii|_{k-1}}}'(x)}{(g_{\overleftarrow{\iii|_{k}}}^\ttt)_y'(x,y)}\frac{(g_{i_\ell})_{yy}''(F_{\overleftarrow{\iii|_{\ell-1}}}^\ttt(x,y))}{(g_{i_\ell})_{y}'(F_{\overleftarrow{\iii|_{\ell-1}}}^\ttt(x,y))}\\ 
      &\qquad\qquad\qquad\qquad\qquad\qquad\cdot\biggl(\frac{\partial}{\partial t_{j,2}}g_{\overleftarrow{\iii|_{\ell-1}}}^\ttt(x,y)-\frac{\partial}{\partial t_{h,2}}g_{\overleftarrow{\iii|_{\ell-1}}}^\ttt(x,y)\biggr)\biggr)\\
			&=\sum_{k=2}^\infty\biggl(\frac{\partial}{\partial t_{j,2}}g_{\overleftarrow{\iii|_{k-1}}}^\ttt(x,y)-\frac{\partial}{\partial t_{h,2}}g_{\overleftarrow{\iii|_{k-1}}}^\ttt(x,y)\biggr)\biggl(\frac{-(g_{i_k})_{xy}''(F_{\overleftarrow{\iii|_{k-1}}}^\ttt(x,y))f_{\overleftarrow{\iii|_{k-1}}}'(x)}{(g_{\overleftarrow{\iii|_{k}}}^\ttt)_y'(x,y)}\\
			&\qquad\qquad+\frac{(g_{i_k})_{yy}''(F_{\overleftarrow{\iii|_{k-1}}}^\ttt(x,y))}{(g_{i_k})_{y}'(F_{\overleftarrow{\iii|_{k-1}}}^\ttt(x,y))}\sum_{\ell=k}^\infty\frac{(g_{i_\ell})_x'(F_{\overleftarrow{\iii|_{\ell-1}}}^\ttt(x,y))f_{\overleftarrow{\iii|_{\ell-1}}}'(x)}{(g_{\overleftarrow{\iii|_{\ell}}}^\ttt)_y'(x,y)}\biggr).
		\end{split}
	\end{equation*}
Thus, by the assumptions,
\[
\biggl|\frac{\partial}{\partial t_{j,2}}u_\ttt(\iii,x,y)-\frac{\partial}{\partial t_{h,2}}u_\ttt(\iii,x,y)\biggr|\leq\sum_{k=2}^\infty\frac43\Bigl(\frac13\cdot\Bigl(\frac14\Bigr)^{k-1}+\frac13\cdot\sum_{\ell=k}^\infty\Bigl(\frac14\Bigr)^{\ell-1}\Bigr)=\frac{28}{81}
\]
as claimed.
\end{proof}

\begin{proof}[Proof of Proposition~\ref{lem:basictransv2}]
To simplify notation, let $y(x) = y_\ttt(\iii,\pi_\ttt(\jjj),x)\equiv y_\ttt(\iii,\pi_\ttt(\hhh),x)$. By applying \eqref{eq:solution}, we get
\begin{align*}
		\biggl(\frac{\partial}{\partial t_{j_1,2}}&y_\ttt(\iii,\pi_\ttt(\jjj),x)-\frac{\partial}{\partial t_{j_1,2}}y_\ttt(\iii,\pi_\ttt(\hhh),x)\biggr)\\ 
    &\qquad\qquad-\biggl(\frac{\partial}{\partial t_{h_1,2}}y_\ttt(\iii,\pi_\ttt(\jjj),x)-\frac{\partial}{\partial t_{h_1,2}}y_\ttt(\iii,\pi_\ttt(\hhh),x)\biggr)\\
		&=\biggl(\frac{\partial}{\partial t_{j_1,2}}\pi_\ttt^2(\jjj)-\frac{\partial}{\partial t_{h_1,2}}\pi_\ttt^2(\jjj)\biggr)\exp\biggl(\int_{\pi^1(\jjj)}^x(u_\ttt)'_y(\iii,z,y(z))\dd z\biggr)\\
		&\qquad\qquad+\biggl(\frac{\partial}{\partial t_{h_1,2}}\pi_\ttt^2(\hhh)-\frac{\partial}{\partial t_{j_1,2}}\pi_\ttt^2(\hhh)\biggr)\exp\biggl(\int_{\pi^1(\hhh)}^x(u_\ttt)'_y(\iii,z,y(z))\dd z\biggr)\\
		&\qquad\qquad+\int_{\pi^1(\jjj)}^{\pi^1(\hhh)}\exp\biggl(\int_w^x(u_\ttt)'_y(\iii,z,y(z))\dd z\biggr)\biggl(\biggl(\frac{\partial}{\partial t_{j_1,2}}u_\ttt\biggr)(\iii,w,y(w))\\ 
    &\qquad\qquad\qquad\qquad\qquad\qquad\qquad-\biggl(\frac{\partial}{\partial t_{h_1,2}}u_\ttt\biggr)(\iii,w,y(w))\biggr)\dd w.
\intertext{By \eqref{eq:usualtrans} and Lemma~\ref{lem:uyandut}, we can estimate the above from below by}
		&\geq\frac23\exp\Bigl(-\frac{112}{135}(x-\pi^1(\jjj))\Bigr)+\frac23\exp\Bigl(-\frac{112}{135}(\pi^1(\hhh)-x)\Bigr)\\ 
    &\qquad\qquad-\frac{56}{81}\int_{\pi^1(\iii)}^{\pi^1(\hhh)}\exp\Bigl(\frac{112}{135}|x-w|\Bigr)\dd w\\
		&=\frac23\exp\Bigl(-\frac{112}{135}(x-\pi^1(\jjj))\Bigr)+\frac23\exp\Bigl(-\frac{112}{135}(\pi^1(\hhh)-x)\Bigr)\\ 
    &\qquad\qquad-\frac{5}{6}\Bigl(\exp\Bigl(\frac{112}{135}(x-\pi^1(\jjj))\Bigr)+\exp\Bigl(\frac{112}{135}(\pi^1(\hhh)-x)\Bigr)-2\Bigr).
\intertext{Writing $x=(\pi^1(\jjj)+\pi^1(\hhh))/2$ and $z=x-\pi^1(\jjj)=\pi^1(\hhh)-x$, we can continue the estimation by}
		&=\Bigl(\frac43-\frac56\exp\Bigl(\frac{224}{135}z\Bigr)+\frac53\exp\Bigl(\frac{112}{135}z\Bigr)\Bigr)\exp\Bigl(-\frac{112}{135}z\Bigr)\\ 
    &\qquad\qquad\geq\Bigl(\frac43-\frac56\exp\Bigl(\frac{224}{135}\Bigr)+\frac53\exp\Bigl(\frac{112}{135}\Bigr)\Bigr)e^{-\frac{112}{135}}>0,
\end{align*}
where the last two inequalities follow by simple calculus. The claim then holds for either $k=j_1$ or $k=h_1$ with the choice $\delta=(\frac23-\frac{5}{12}\exp(\frac{224}{135})+\frac56\exp(\frac{112}{135}))\exp(-\frac{112}{135})$.
\end{proof}

\bibliographystyle{abbrv}
\bibliography{Bibliography}

\begin{thebibliography}{10}

\bibitem{ODE}
S.~Ahmad and A.~Ambrosetti.
\newblock {\em Differential equations---{A} first course on {ODE} and a brief
  introduction to {PDE}}.
\newblock De Gruyter Textbook. De Gruyter, Berlin, [2019] \copyright 2019.

\bibitem{BBR}
K.~Bara\'{n}ski, B.~B\'{a}r\'{a}ny, and J.~Romanowska.
\newblock On the dimension of the graph of the classical {W}eierstrass
  function.
\newblock {\em Adv. Math.}, 265:32--59, 2014.

\bibitem{baranski2023regularity}
K.~Bara{\'n}ski, Y.~Gutman, and A.~{\'S}piewak.
\newblock Regularity of almost-surely injective projections in {E}uclidean
  spaces.
\newblock Ann. Sc. Norm. Super. Pisa Cl. Sci., to appear.

\bibitem{B2009}
B.~B\'{a}r\'{a}ny.
\newblock Subadditive pressure for {IFS} with triangular maps.
\newblock {\em Bull. Pol. Acad. Sci. Math.}, 57(3-4):263--278, 2009.

\bibitem{BHR}
B.~B\'{a}r\'{a}ny, M.~Hochman, and A.~Rapaport.
\newblock Hausdorff dimension of planar self-affine sets and measures.
\newblock {\em Invent. Math.}, 216(3):601--659, 2019.

\bibitem{BaranyKaenmaki2017}
B.~B\'{a}r\'{a}ny and A.~K\"{a}enm\"{a}ki.
\newblock Ledrappier-{Y}oung formula and exact dimensionality of self-affine
  measures.
\newblock {\em Adv. Math.}, 318:88--129, 2017.

\bibitem{Barreira96}
L.~M. Barreira.
\newblock A non-additive thermodynamic formalism and applications to dimension
  theory of hyperbolic dynamical systems.
\newblock {\em Ergodic Theory Dynam. Systems}, 16(5):871--927, 1996.

\bibitem{Bowen}
R.~Bowen.
\newblock {\em Equilibrium states and the ergodic theory of {A}nosov
  diffeomorphisms}, volume 470 of {\em Lecture Notes in Mathematics}.
\newblock Springer-Verlag, Berlin, revised edition, 2008.
\newblock With a preface by David Ruelle, Edited by Jean-Ren\'e Chazottes.

\bibitem{Falconer1988}
K.~J. Falconer.
\newblock The {H}ausdorff dimension of self-affine fractals.
\newblock {\em Math. Proc. Cambridge Philos. Soc.}, 103(2):339--350, 1988.

\bibitem{Falconer94}
K.~J. Falconer.
\newblock Bounded distortion and dimension for nonconformal repellers.
\newblock {\em Math. Proc. Cambridge Philos. Soc.}, 115(2):315--334, 1994.

\bibitem{Feng23}
D.-J. Feng.
\newblock Dimension of invariant measures for affine iterated function systems.
\newblock {\em Duke Math. J.}, 172(4):701--774, 2023.

\bibitem{FengHu2009}
D.-J. Feng and H.~Hu.
\newblock Dimension theory of iterated function systems.
\newblock {\em Comm. Pure Appl. Math.}, 62(11):1435--1500, 2009.

\bibitem{FengSimonsurvey}
D.-J. Feng and K.~Simon.
\newblock Dimension estimates for {$C^1$} iterated function systems and {$C^1$}
  repellers, a survey.
\newblock In {\em Thermodynamic formalism}, volume 2290 of {\em Lecture Notes
  in Math.}, pages 421--467. Springer, Cham, [2021] \copyright 2021.

\bibitem{FengSimon1}
D.-J. Feng and K.~Simon.
\newblock Dimension estimates for {$C^1$} iterated function systems and
  repellers. {P}art {I}.
\newblock {\em Ergodic Theory and Dynamical Systems}, page 1–34, 2022.

\bibitem{FengSimon2}
D.-J. Feng and K.~Simon.
\newblock Dimension estimates for {$C^1$} iterated function systems and
  repellers. {P}art {II}.
\newblock {\em Ergodic Theory Dynam. Systems}, 42(11):3357--3392, 2022.

\bibitem{hochman2017selfsimilar}
M.~Hochman.
\newblock On self-similar sets with overlaps and inverse theorems for entropy
  in $\mathbb{R}^d$.
\newblock Mem. Amer. Math. Soc., to appear.

\bibitem{Hochman2014}
M.~Hochman.
\newblock On self-similar sets with overlaps and inverse theorems for entropy.
\newblock {\em Ann. of Math. (2)}, 180(2):773--822, 2014.

\bibitem{HochmanRapaport2021}
M.~Hochman and A.~Rapaport.
\newblock Hausdorff dimension of planar self-affine sets and measures with
  overlaps.
\newblock {\em J. Eur. Math. Soc. (JEMS)}, 24(7):2361--2441, 2022.

\bibitem{Hutchinson1981}
J.~E. Hutchinson.
\newblock Fractals and self-similarity.
\newblock {\em Indiana Univ. Math. J.}, 30(5):713--747, 1981.

\bibitem{JordanPollicottSimon2007}
T.~Jordan, M.~Pollicott, and K.~Simon.
\newblock Hausdorff dimension for randomly perturbed self affine attractors.
\newblock {\em Comm. Math. Phys.}, 270(2):519--544, 2007.

\bibitem{JurgaLeeLY}
N.~Jurga and L.~D. Lee.
\newblock Ledrappier-{Y}oung formulae for a family of nonlinear attractors.
\newblock {\em Math. Z.}, 300(3):2413--2427, 2022.

\bibitem{Ledrappier92}
F.~Ledrappier.
\newblock On the dimension of some graphs.
\newblock In {\em Symbolic dynamics and its applications ({N}ew {H}aven, {CT},
  1991)}, volume 135 of {\em Contemp. Math.}, pages 285--293. Amer. Math. Soc.,
  Providence, RI, 1992.

\bibitem{LedrappierYoung1}
F.~Ledrappier and L.-S. Young.
\newblock The metric entropy of diffeomorphisms. {I}. {C}haracterization of
  measures satisfying {P}esin's entropy formula.
\newblock {\em Ann. of Math. (2)}, 122(3):509--539, 1985.

\bibitem{LedrappierYoung2}
F.~Ledrappier and L.-S. Young.
\newblock The metric entropy of diffeomorphisms. {II}. {R}elations between
  entropy, exponents and dimension.
\newblock {\em Ann. of Math. (2)}, 122(3):540--574, 1985.

\bibitem{Mattila1995}
P.~Mattila.
\newblock {\em Geometry of sets and measures in {E}uclidean spaces}, volume~44
  of {\em Cambridge Studies in Advanced Mathematics}.
\newblock Cambridge University Press, Cambridge, 1995.
\newblock Fractals and rectifiability.

\bibitem{Diffineq}
B.~G. Pachpatte.
\newblock {\em Inequalities for differential and integral equations}, volume
  197 of {\em Mathematics in Science and Engineering}.
\newblock Academic Press, Inc., San Diego, CA, 1998.

\bibitem{PoSi}
M.~Pollicott and K.~Simon.
\newblock The {H}ausdorff dimension of {$\lambda$}-expansions with deleted
  digits.
\newblock {\em Trans. Amer. Math. Soc.}, 347(3):967--983, 1995.

\bibitem{rapaport2022self}
A.~Rapaport.
\newblock On self-affine measures associated to strongly irreducible and
  proximal systems.
\newblock Adv. Math., to appear.

\bibitem{RenShen}
H.~Ren and W.~Shen.
\newblock A dichotomy for the {W}eierstrass-type functions.
\newblock {\em Invent. Math.}, 226(3):1057--1100, 2021.

\bibitem{Shen}
W.~Shen.
\newblock Hausdorff dimension of the graphs of the classical {W}eierstrass
  functions.
\newblock {\em Math. Z.}, 289(1-2):223--266, 2018.

\bibitem{Shmerkin2014}
P.~Shmerkin.
\newblock On the exceptional set for absolute continuity of {B}ernoulli
  convolutions.
\newblock {\em Geom. Funct. Anal.}, 24(3):946--958, 2014.

\bibitem{Simmons}
D.~Simmons.
\newblock Conditional measures and conditional expectation; {R}ohlin's
  disintegration theorem.
\newblock {\em Discrete Contin. Dyn. Syst.}, 32(7):2565--2582, 2012.

\bibitem{SiSo}
K.~Simon and B.~Solomyak.
\newblock Hausdorff dimension for horseshoes in {$\bold R^3$}.
\newblock {\em Ergodic Theory Dynam. Systems}, 19(5):1343--1363, 1999.

\bibitem{SSU01b}
K.~Simon, B.~Solomyak, and M.~Urba\'{n}ski.
\newblock Hausdorff dimension of limit sets for parabolic {IFS} with overlaps.
\newblock {\em Pacific J. Math.}, 201(2):441--478, 2001.

\bibitem{SSU01}
K.~Simon, B.~Solomyak, and M.~Urba\'{n}ski.
\newblock Invariant measures for parabolic {IFS} with overlaps and random
  continued fractions.
\newblock {\em Trans. Amer. Math. Soc.}, 353(12):5145--5164, 2001.

\bibitem{Solomyak1998}
B.~Solomyak.
\newblock Measure and dimension for some fractal families.
\newblock {\em Math. Proc. Cambridge Philos. Soc.}, 124(3):531--546, 1998.

\bibitem{Varju19}
P.~P. Varj\'{u}.
\newblock On the dimension of {B}ernoulli convolutions for all transcendental
  parameters.
\newblock {\em Ann. of Math. (2)}, 189(3):1001--1011, 2019.

\bibitem{Zhang97}
Y.~Zhang.
\newblock Dynamical upper bounds for {H}ausdorff dimension of invariant sets.
\newblock {\em Ergodic Theory Dynam. Systems}, 17(3):739--756, 1997.

\end{thebibliography}

\end{document}